\definecolor{lava}{rgb}{0.81, 0.06, 0.13}
\def\newaliasedtheorem#1[#2]#3{
  \newaliascnt{#1@alt}{#2}
  \newtheorem{#1}[#1@alt]{#3}
  \expandafter\newcommand\csname #1@altname\endcsname{#3}
}
\numberwithin{equation}{section}
\newtheoremstyle{slanted}{\topsep}{\topsep}{\slshape}{}{\bfseries}{.}{.5em}{}
\theoremstyle{plain}
\newtheorem{theorem}{Theorem}[section]
\theoremstyle{definition}
\theoremstyle{remark}
\newcommand{\N}{\mathbb{N}}
\newcommand{\setR}{\mathbb{R}}
\newcommand{\R}{\mathbb{R}}
\renewcommand{\S}{\mathbb{S}}
\newcommand{\mm}{\mathfrak m}
\newcommand{\X}{{\rm X}}
\newcommand{\lip}{{\rm lip \,}}
\newcommand{\nchi}{{\raise.3ex\hbox{\(\chi\)}}}
\newcommand{\eps}{\varepsilon}
\let\phi\varphi
\newcommand{\di}{\mathop{}\!\mathrm{d}}
\newcommand{\res}{\mathop{\hbox{\vrule height 7pt width .5pt depth 0pt
\vrule height .5pt width 6pt depth 0pt}}\nolimits}
\newcommand{\Ch}{{\sf Ch}}
\newcommand{\st}{\ensuremath{\ :\ }} 
\newcommand{\eqdef}{\ensuremath{\vcentcolon=}}
\newcommand{\haus}{\mathcal{H}}
\newcommand{\dist}{\mathsf{d}}
\newcommand{\meas}{\mathfrak{m}}
\newcommand{\diam}{\mathrm{diam}}
\newcommand{\CD}{\mathsf{CD}}
\newcommand{\RCD}{\mathsf{RCD}}
\DeclareMathOperator{\AVR}{AVR}
\DeclareMathOperator{\Ric}{Ric}
\DeclareMathOperator{\Per}{Per}
\newfont{\tmpf}{cmsy10 scaled 2500}
\newcommand{\de}{\ensuremath{\,\mathrm d}} 
\renewcommand{\d}{{\rm d}}
\subjclass{Primary: 49Q20, 49J45, 53A35. Secondary: 53C23, 49J40.}
\keywords{Isoperimetric problem, Isoperimetric profile, Lower Ricci bounds, RCD space, Isoperimetric inequality}
\date{\today}
\begin{document}

\title[Isoperimetry on manifolds with Ricci bounded below]{Isoperimetry on manifolds with Ricci bounded below:\\ overview of recent results and methods}

\author{Marco Pozzetta}
\address{Dipartimento di Matematica e Applicazioni, Universit\`a di Napoli Federico II, Via Cintia, Monte S. Angelo 80126 Napoli, Italy.}
\email{marco.pozzetta@unina.it}

\maketitle

\begin{abstract}
We review recent results on the study of the isoperimetric problem on Riemannian manifolds with Ricci lower bounds.

We focus on the validity of sharp second order differential inequalities satisfied by the isoperimetric profile of possibly noncompact Riemannian manifolds with Ricci lower bounds. We give a self-contained overview of the methods employed for the proof of such result, which exploit modern tools and ideas from nonsmooth geometry. The latter methods are needed for achieving the result even in the smooth setting.

Next, we show applications of the differential inequalities of the isoperimetric profile, providing simplified proofs of: the sharp and rigid isoperimetric inequality on manifolds with nonnegative Ricci and Euclidean volume growth, existence of isoperimetric sets for large volumes on manifolds with nonnegative Ricci and Euclidean volume growth, the classical L\'{e}vy--Gromov isoperimetric inequality.

On the way, we discuss relations of these results and methods with the existing literature, pointing out several open problems.
\end{abstract}

\tableofcontents

\section{Introduction}

Let $(M,g)$ be a possibly noncompact $N$-dimensional complete Riemannian manifold. For $V \in(0,\haus^N(M))$, the classical \emph{isoperimetric problem} aims at studying the minimization
\[
\inf\left\{ P(E) \st E \subset M, \, \haus^N(E)=V \right\},
\]
where $P(E)$ denotes the perimeter of $E$, which, roughly speaking, measures the $(N-1)$-dimensional volume of the boundary of $E$. The previous infimum as a function of the volume $V$ is called \emph{isoperimetric profile} and denoted by $I_M(V)$. A minimizer is called \emph{isoperimetric set}.

\medskip

There is a classical connection between the isoperimetric problem on manifolds and lower bounds on the Ricci curvature, going back to the L\'{e}vy--Gromov isoperimetric inequality at least, see \cite{GromovLevyGromovOriginale} and \autoref{sec:LevyGromov} below. 
In this work, we aim at discussing this connection by reviewing recent results from \cite{AFP21, AntonelliNardulliPozzetta, AntBruFogPoz, AntonelliPasqualettoPozzetta21, APPSa, APPSb} in a self-contained exposition.

A celebrated result coming from the seminal works \cite{BavardPansu86, Gallotast} states that the isoperimetric profile of a compact manifold satisfies a second order \emph{differential inequality} depending on the lower bound on the Ricci curvature and on the dimension. These differential inequalities are classically derived by computing the second variation of the perimeter of an isoperimetric set, hence they rely on \emph{existence} of minimizers, which is possibly false on general noncompact manifolds with Ricci lower bounds \cite{Rit01NonExistence, CaneteRitore, AFP21}. Nonetheless, in the recent \cite{APPSa}, the validity of these differential inequalities is generalized to noncompact manifolds, yielding the next result, on which we shall focus our attention for the rest of the work.

\begin{theorem}[{Sharp differential inequalities of the isoperimetric profile \cite[Theorem 1.1]{APPSa}, cf. \autoref{thm:DifferentialInequalitiesProfile}}]\label{thm:DifferentialInequalitiesProfileINTRO}
Let $N \in \N$ with $N\ge 2$, and let $K\in\R$. Let $(M,g)$ be an $N$-dimensional complete Riemannian manifold with $\Ric\ge K$. Assume that there exists $v_0>0$ such that $\haus^N(B_1(x)) \ge v_0$ for any $x \in M$.
Let $\psi\eqdef I_M^{\frac{N}{N-1}}$. Then $\psi$ solves
\begin{equation*}
    \psi'' \le - \frac{K\, N}{N-1} \psi^{\frac{2-N}{N}},
\end{equation*}
in the sense of distributions on $(0,\haus^N(M))$, equivalently in the viscosity sense\footnote{For the precise definitions we refer to the beginning of \autoref{sec:SharpDiffIneq}.} on $(0,\haus^N(M))$, equivalently $\overline{D}^2 \psi(V)\le - \tfrac{K\, N}{N-1} \psi^{\frac{2-N}{N}}(V)$ for any $V \in (0,\haus^N(M))$, where $\overline{D}^2$ is the upper second derivative, see \eqref{eq:DefD2}.
\end{theorem}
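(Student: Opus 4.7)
The classical Bavard--Pansu/Gallot argument derives such a differential inequality from the second variation of perimeter at an isoperimetric minimizer $E_V$ of volume $V$: using that the boundary of an isoperimetric set has constant (generalized) mean curvature $H(V)=I_M'(V)$, one performs a constant-speed normal variation and exploits $\Ric\ge K$ together with the Cauchy--Schwarz bound $|A|^2\ge H(V)^2/(N-1)$ to get $I_M''(V)\le -K-H(V)^2/(N-1)$ at regular points of $I_M$. A direct algebraic manipulation then translates this into $\psi''\le -\tfrac{KN}{N-1}\psi^{(2-N)/N}$. The obstacle on noncompact $M$ is that no isoperimetric set of volume $V$ need exist.

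My plan is to replace honest minimizers by \emph{generalized} ones. Under $\haus^N(B_1(x))\ge v_0$, a concentration-compactness / asymptotic mass decomposition should produce, for every $V\in(0,\haus^N(M))$, a finite decomposition $I_M(V)=P(E_0)+\sum_{i=1}^{k} P_{X_i}(E_i)$, where $E_0\subset M$ is an isoperimetric set and each $E_i$ is an isoperimetric set in a pointed measured Gromov--Hausdorff limit $(X_i,\dist_i,\meas_i)$ of $(M,\dist_g,\haus^N,p_n)$ along some diverging sequence, with $\haus^N(E_0)+\sum_i\meas_i(E_i)=V$. By stability of lower Ricci bounds under pmGH convergence, each $X_i$ is an $\RCD(K,N)$ space, and a Lagrange-multiplier argument forces a common value of the generalized mean curvature $H$ across all components of the minimizing configuration.

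I would then work in the viscosity formulation: fixing $\bar V$ and a $C^2$ function $\varphi\ge\psi$ touching $\psi$ at $\bar V$, I need to bound $\varphi''(\bar V)$. Picking any component $E_j$ of a generalized minimizer for $\bar V$ and using $\RCD(K,N)$ regularity theory to extract a sufficiently regular portion of $\partial E_j$, a nonsmooth second variation along an outward variation with prescribed infinitesimal volume change $t$ should yield
\[
P_{X_j}(E_j^t)\le P_{X_j}(E_j)+Ht-\tfrac12\Big(K+\tfrac{H^2}{N-1}\Big)t^2+o(t^2),
\]
valid for both signs of $t$. Replacing the $j$-th component by $E_j^t$ produces a competitor of volume $\bar V+t$ and hence an upper bound on $I_M(\bar V+t)$ with the same quadratic expansion; raising to the $N/(N-1)$ power and computing $\overline{D}^2$ at $\bar V$ delivers the stated inequality for $\psi$.

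\textbf{Main obstacle.} The hard part is making the second variation rigorous in the $\RCD(K,N)$ calculus on the possibly singular limit spaces $X_i$: the localized normal variation with prescribed infinitesimal volume change, the identification of the constant generalized mean curvature and the recognition of $H$ as (a selection of) $I_M'(\bar V)$, and the extraction of both the $\Ric\ge K$ contribution and the trace bound $|A|^2\ge H^2/(N-1)$ in a nonsmooth setting. The generalized existence theorem, while nontrivial, reduces to concentration-compactness once the noncollapsing assumption $\haus^N(B_1(x))\ge v_0$ gives pmGH-precompactness of pointed rescalings of $M$.
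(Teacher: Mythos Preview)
Your overall architecture---asymptotic mass decomposition producing isoperimetric sets in $\RCD(K,N)$ pmGH limits at infinity, then a variational argument at those sets to bound $\overline{D}^2\psi$---is exactly the paper's. The gap is precisely the step you flag as the ``hard part'': the paper states explicitly that the classical second variation computation, which on a smooth manifold isolates the contributions $\Ric(\nu,\nu)$ and $|A|^2\ge H^2/(N-1)$, is \emph{out of reach in the nonsmooth realm at the moment}. In an $\RCD$ space there is no second fundamental form $A$ of $\partial E_j$, no smooth regularity of isoperimetric boundaries beyond topological regularity, and no localized normal variation along which one could run your expansion $P(E_j^t)\le P(E_j)+Ht-\tfrac12(K+H^2/(N-1))t^2+o(t^2)$. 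So as written, the proposal identifies the obstacle but does not overcome it.

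The paper's substitute is the following. Instead of second variation, one proves that the signed distance $f$ from an isoperimetric set $E$ satisfies sharp two-sided Laplacian bounds $\boldsymbol\Delta f\le (N-1)\,s'_{K/(N-1),-c/(N-1)}(f)/s_{K/(N-1),-c/(N-1)}(f)$ on $X\setminus\overline E$ (and the mirror bound on $E$) for some constant $c$, the \emph{mean curvature barrier}. These are obtained by a viscosity/sliding argument \`a la Mondino--Semola, not by differentiating a hypersurface. Integrating them via the Gauss--Green formula gives Heintze--Karcher bounds of the form $P(E_t)\le P(E)\big(1+\tfrac{c}{N-1}t\big)^{N-1}$ for the tubular neighborhoods $E_t=\{f<t\}$ (here $K=0$; the general case is analogous). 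This single inequality already encodes both the Ricci lower bound and the trace inequality you were trying to extract, and it is enough: comparing $\varphi(\haus^N(E_t))$ with $P(E_t)$ for $t$ near $0$ yields $\varphi'(\bar V)=c$ and then the desired bound on $\varphi''(\bar V)$. No regularity of $\partial E$ beyond topological regularity is used.
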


The proof of \autoref{thm:DifferentialInequalitiesProfileINTRO} is based on several preliminary results which exploit tools and methods from \emph{nonsmooth geometry}, especially from the theory of $\RCD$ spaces, which we regard here as a generalization of the concept of Riemannian manifold with Ricci bounded below. Note that the proof of \autoref{thm:DifferentialInequalitiesProfileINTRO} that we shall outline \emph{cannot avoid} the use of such methods at the moment.\\
In order to overcome the possible nonexistence of isoperimetric sets on noncompact manifolds, in \cite{AFP21, AntonelliNardulliPozzetta} after \cite{Lions84I, Nar14, RitRosales04}, it has been set up an approach by \emph{direct method} to identify generalized isoperimetric sets under the sole assumptions of \autoref{thm:DifferentialInequalitiesProfileINTRO}. Such result will be recalled in \autoref{thm:AsymptoticMassDecomposition} and it exploits the natural \emph{precompactness} of sequences of spaces with lower Ricci bounds. In particular, the mentioned generalized isoperimetric sets are contained in (the union of) possibly nonsmooth spaces different from the starting ambient manifold.
Hence, in order to derive the desired differential inequalities, it is not possible to perform the classical argument by second variation of the area, as the same computation is out of reach in the nonsmooth realm at the moment. Instead, by proving topological regularity of isoperimetric sets - \autoref{thm:Regularity} - and existence of a weak notion of barrier on the mean curvature - \autoref{thm:MeanCurvatureBarriers} - it is possible to give a sharp estimate on the second variation of the perimeter, sufficient to deduce \autoref{thm:DifferentialInequalitiesProfileINTRO}. Note that this argument completely avoids any deeper regularity theory about boundaries of isoperimetric sets.

\medskip

To show how powerful \autoref{thm:DifferentialInequalitiesProfileINTRO} is, we shall explicitly exploit the differential inequalities of the profile to provide proofs of:
\begin{enumerate}
\item the sharp and rigid isoperimetric inequality on manifolds with nonnegative Ricci curvature and Euclidean volume growth \cite{AgostinianiFogagnoloMazzieri, BrendleFigo, BaloghKristaly, APPSb, CavallettiManini}, see \autoref{thm:DisugisoperimetricaAVR};

\item a general existence result of isoperimetric sets for large volumes on manifolds with nonnegative Ricci curvature and Euclidean volume growth \cite{AntBruFogPoz}, see \autoref{thm:ABFPexistence};

\item the L\'{e}vy--Gromov isoperimetric inequality \cite{GromovLevyGromovOriginale, Gromovmetric}, see \autoref{thm:LevyGromov}.
\end{enumerate}

Proofs of results (1) and (2) above bring simplifications to previous proofs from \cite{APPSb} and \cite{AntBruFogPoz}. The proof of (3) above is essentially a review of the argument from \cite{Bayle03}, with a simplification of the rigidity part.

\medskip

Results below are stated and proved in the context of $\RCD$ spaces. However, all the necessary preliminaries are briefly recalled in \autoref{sec:Preliminaries} and gradually stated when needed, aiming at a self-contained presentation.

We shall also discuss history and literature related to each result in the course of the note, as well as important open problems on the topic.

\medskip

\noindent\textbf{Organization.} In \autoref{sec:Preliminaries} we collect fundamental definitions and facts on $\RCD$ spaces, convergence, sets of finite perimeter and isoperimetric problem. In \autoref{sec:Main} we review the main tools needed for the proof of the differential inequalities of the profile, namely the asymptotic mass decomposition \autoref{thm:AsymptoticMassDecomposition} and the existence of mean curvature barriers \autoref{thm:MeanCurvatureBarriers}; then we outline the proof of \autoref{thm:DifferentialInequalitiesProfileINTRO}. In \autoref{sec:Applications} we collect the above mentioned applications of \autoref{thm:DifferentialInequalitiesProfileINTRO} to the isoperimetric problem on spaces with nonnegative curvature. Finally, \autoref{sec:Appendix} is devoted to basic facts on concave functions and ODE comparison.

\medskip

\noindent\textbf{Acknowledgements.} I am partially supported by the INdAM - GNAMPA Project 2022 CUP \_ E55F22000270001 ``Isoperimetric problems: variational and geometric aspects''. I would like to warmly thank Gioacchino Antonelli, Elia Bruè, Mattia Fogagnolo, Stefano Nardulli, Enrico Pasqualetto, Daniele Semola and Ivan Yuri Violo for countless inspiring discussions on the isoperimetric problem and related topics. I also thank Valentina Franceschi, Alessandra Pluda and Giorgio Saracco for having organized the very nice workshop ``Anisotropic Isoperimetric Problems \& Related Topics'' and for the opportunity to write this work for the conference proceedings of the event.

\section{Preliminaries}\label{sec:Preliminaries}

In this section we briefly recall the concept of $\RCD$ space as a generalization of the notion of Riemannian manifold with Ricci bounded below, together with basic definitions and facts on sets of finite perimeter and on the isoperimetric problem in this framework. We want to stress how the smooth theory naturally extends to this setting, discussing some examples as well.

\subsection{$\RCD$ spaces, examples and Gromov--Hausdorff convergence}

For the sake of simplicity, we will say that a triple $(X,\dist,\meas)$ is a \emph{metric measure space}, shortly m.m.s., if $(X,\dist)$ is a locally compact separable metric space and $\meas$ is a nonnegative Radon measure on $X$ such that ${\rm spt}\,\meas=X$.

\medskip

The \emph{Cheeger energy} on a metric measure space \((X,\dist,\meas)\) is defined as the \(L^2\)-relaxation of the functional
\(f\mapsto\frac{1}{2}\int\lip^2 f\de\meas\), see \cite{AmbrosioGigliSavare11} after \cite{Cheeger99}, where the \emph{slope} $\lip  f$ of a locally Lipschitz function $f$ is defined by
\[
\lip  f(x) \eqdef \limsup_{y\to x} \frac{|f(y)-f(x)|}{\dist(x,y)},
\]
if $x \in X$ is not isolated, while $\lip f(x)=0$ is $x$ is isolated. Hence, for any function \(f\in L^2(X)\) we define
\[
\Ch(f)\coloneqq\inf\bigg\{\liminf_{i}\,\frac{1}{2}\int\lip^2 f_i\di\meas \st  f_i \in {\rm Lip}_c(X),\,f_i\to f\text{ in }L^2(X)\bigg\}\, .
\]
The \emph{Sobolev space} \(H^{1,2}(X)\) is defined as the finiteness domain \(\{f\in L^2(X)\,:\,\Ch(f)<+\infty\}\) of the Cheeger energy, thus naturally extending the usual definition of smooth manifolds.\\
The restriction of the Cheeger energy to the Sobolev space admits the integral representation \(\Ch(f)=\frac{1}{2}\int|\nabla f|^2\di\meas\),
for a uniquely determined function \(|\nabla f|\in L^2(X)\) that is called the \emph{minimal weak upper gradient} of \(f\in H^{1,2}(X)\).
The linear space \(H^{1,2}(X)\) is a Banach space if endowed with the Sobolev norm
\[
\|f\|_{H^{1,2}(X)}\coloneqq\sqrt{\|f\|_{L^2(X)}^2+2\Ch(f)}=\sqrt{\|f\|_{L^2(X)}^2+\||\nabla f|\|_{L^2(X)}^2},\quad\text{ for every }f\in H^{1,2}(X)\, .
\]
Following \cite{GigliOnTheDiffStructure}, when \(H^{1,2}(X)\) is a Hilbert space (or equivalently \(\Ch\) is a quadratic form) we say that the metric measure
space \((X,\dist,\meas)\) is \emph{infinitesimally Hilbertian}.\\
We further define the mapping \(H^{1,2}(X)\times H^{1,2}(X)\ni(f,g)\mapsto\nabla f\cdot\nabla g\in L^1(X)\) as
\[
\nabla f\cdot\nabla g\coloneqq\frac{|\nabla(f+g)|^2-|\nabla f|^2-|\nabla g|^2}{2},\quad\text{ for every }f,g\in H^{1,2}(X)\, .
\]
We define the \emph{Laplacian} as follows: we define \(D(\Delta)\subset H^{1,2}(\X)\) as the
space of all functions \(f\in H^{1,2}(\X)\) for which there exists
(a uniquely determined) \(\Delta f\in L^2(\mm)\) such that
\[
\int \nabla f\cdot\nabla g\de \mm=-\int g\,\Delta f\de\mm,
\quad\text{ for every }g\in H^{1,2}(\X),
\]
Let us mention that it is possible to derive an effective generalized first order calculus on infinitesimally Hilbertian spaces following \cite{GigliOnTheDiffStructure, Gigli17}, generalizing the usual calculus of the smooth setting.

\medskip

With the above terminology, we can introduce the definition of the so-called $\RCD$ condition. For more on the topic, we refer the interested reader to the survey \cite{AmbrosioSurvey} and references therein. Let us just mention that, after the introduction in the seminal independent works \cite{Sturm1,Sturm2} and \cite{LottVillani} of the \emph{curvature dimension condition} $\CD(K,N)$, encoding in a synthetic way the notion of Ricci curvature bounded from below by $K \in \R$ and dimension bounded above by $N \in [1,+\infty)$, the definition of \emph{Riemannian curvature dimension condition} $\RCD(K,N)$ for a metric measure space was first proposed in \cite{GigliOnTheDiffStructure} and then studied in \cite{Gigli13, ErbarKuwadaSturm15,AmbrosioMondinoSavare15}. See \cite{CavallettiMilmanCD, LiGlobalization} for the equivalence between the $\RCD^*(K,N)$ and the $\RCD(K,N)$ condition. The infinite dimensional counterpart of this notion has been previously investigated in \cite{AmbrosioGigliSavare14, AmbrosioGigliMondinoRajala15}.

\begin{definition}[\(\RCD(K,N)\) space]\label{def:RCD}
Let \((\X,\dist,\meas)\) be a metric measure space. Then
\((\X,\dist,\meas)\) is an \emph{\({\sf RCD}(K,N)\) space}, for some
\(K\in\R\) and \(N\in[1,\infty)\), provided the following conditions hold:
\begin{itemize}
\item There exist \(C>0\) and \(\bar x\in\X\) such that
\(\meas(B_r(\bar x))\leq e^{Cr^2}\) for every \(r>0\).
\item \textsc{Sobolev-to-Lipschitz property.} If \(f\in H^{1,2}(\X)\)
satisfies \(|\nabla f|\in L^\infty(\meas)\), then \(f\) admits a Lipschitz
representative \(\bar f\colon\X\to\R\) such that
\({\rm Lip}(\bar f)=\big\||\nabla f|\big\|_{L^\infty(\meas)}\).
\item \((\X,\dist,\meas)\) is infinitesimally Hilbertian.
\item \textsc{Bochner inequality.} It holds that
\[
\frac{1}{2}\int|\nabla f|^2\Delta g\de\meas\geq
\int g\bigg(\frac{(\Delta f)^2}{N}+
\nabla f\cdot\nabla\Delta f+K|\nabla f|^2\bigg)\de\meas,
\]
for every \(f\in D(\Delta)\) with \(\Delta f\in H^{1,2}(\X)\)
and \(g\in D(\Delta)\cap L^\infty(\meas)\) nonnegative with
\(\Delta g\in L^\infty(\meas)\).
\end{itemize}
An $\RCD(K,N)$ space $(X,\dist,\haus^N)$ is said to be \emph{noncollapsed}\footnote{By the rectifiability results \cite{MondinoNaber, KellMondino, GigliPasqualettoReferenceMeasure, BruePasqualettoSemola20}, it follows that for a noncollapsed $\RCD(K,N)$ space, the number $N$ is necessarily integer. See also \cite{ChCo0, CheegerColdingI} for the first contributions to the study of noncollapsed spaces arising as limits of manifolds, and the recent \cite{BrenaGigliHondaZhu} for the equivalence with the notion of weakly noncollapsedness.}. Instead, we will refer to any $\RCD(K,N)$ space $(X,\dist,\meas)$ endowed with reference measure different from $\haus^N$ as to a \emph{collapsed} space.
\end{definition}

We want to stress the similarities of these spaces with the classical Riemannian manifolds with Ricci bounded below.\\
First, \autoref{def:RCD} is clearly consistent with the smooth setting, i.e., if $(M,g)$ is a complete $N$-dimensional Riemannian manifold with Riemannian distance $\dist$, then $(M,\dist,\haus^N)$ is $\RCD(K,N)$ if and only if $\Ric\ge K$. In fact, observe that the Bochner inequality among the axioms of $\RCD(K,N)$ spaces is nothing but the natural weak integral formulation of the standard Bochner inequality on manifolds, the latter being the inequality derived from the Bochner identity \cite{Petersen2016} estimating from below the terms involving the norm of the Hessian and the Ricci tensor.\\
Concerning the first three axioms in \autoref{def:RCD}, as already commented above, the reader may think that they ensure a well-established first order calculus that extends the classical one \cite{GigliOnTheDiffStructure}. Actually, the definition of $\RCD(K,N)$ space is sufficient to recover a variety of classical result in Riemannian Geometry in this generalized setting. Some of these results are recalled below and in \autoref{sec:GeomAnal}, and allow to perform a powerful Geometric Analysis in this setting. We just mention here that the $\RCD(K,N)$ condition as in \autoref{def:RCD} on a m.m.s. $(X,\dist,\meas)$ implies that $\mm$ is uniformly locally doubling, hence the space is proper, and moreover that $(X,\dist)$ is geodesic, i.e., any two points are joined by a curve of length equal to their distance, in particular the space is path-connected.

\medskip

Obviously, considering problems in Geometric Analysis in the framework of $\RCD$ spaces (as we shall do) is not just for the mathematical quest of the greatest generality. Instead, one benefits of new properties enjoyed by the $\RCD$ class, such as the fundamental \emph{precompactness} with respect to Gromov--Hausdorff topology recalled in the next \autoref{sec:GH}, which will be essential in the study of the isoperimetric problem and in the proof of the main result on the differential properties of the isoperimetric profile, \autoref{thm:DifferentialInequalitiesProfileINTRO}.

\subsubsection{Examples}

Before continuing with preliminaries, we discuss a few examples needed in the sequel. Apart from complete Riemannian manifolds with lower Ricci bounds, prototypical examples of $\RCD(K,N)$ spaces are given by weighted manifolds: if $(M,g)$ is an $m$-dimensional complete Riemannian manifold with geodesic distance $\dist$ and $V\in C^\infty(M)$, then $(M,\dist, e^{-V}\haus^m)$ is $\RCD(K,N)$ for $N\ge m$ if and only if the \emph{generalized $N$-Ricci curvature}
\[
\Ric_N \eqdef \Ric + {\rm Hess}\, V - \frac{\nabla V \otimes \nabla V}{N-m},
\]
is bounded below by $K$, where $V$ is assumed to be constant and the last term is defined to be zero if $N=m$. Moreover, (possibly weighted) complete Riemannian manifolds with (generalized) Ricci lower bounds with convex boundary belong to the $\RCD$ class \cite{HanMnfBdry}, where here convex means that the second fundamental form of the boundary with respect the inner normal is nonnegative.

In the sequel, we shall mostly focus on noncollapsed $\RCD(K,N)$ spaces $(X,\dist,\haus^N)$ of dimension $N\ge2$, referring to extensions to the collapsed case when possible. Also, whenever we write a triple $(X,\dist,\haus^N)$, it is tacitly understood that $N$-dimensional Hausdorff measure is computed with respect to the distance in the triple. Within the class of noncollapsed $\RCD(K,N)$ spaces we also find Euclidean convex bodies, i.e., closures of open convex sets in Euclidean spaces, as well as boundaries of convex sets in $\R^N$ (endowed with corresponding intrinsic distance and $(N-1)$-Hausdorff measure). Convex bodies and their boundaries actually belong to the more restrictive class of Alexandrov spaces with nonnegative curvature, we refer to \cite{BuragoBuragoIvanovBook} for a definition; in fact, any $N$-dimensional Alexandrov space with curvature bounded below by $k\in\R$ endowed with $N$-dimensional Hausdorff measure is in particular $\RCD((N-1)k,N)$, see \cite{PetruninAlexandrovCD, ZhangZhuAlexandrovRCD, GigliKuwadaOhta}.

\medskip

We will be particularly interested in two specific constructions, namely cones and spherical suspensions, that we define here in the noncollapsed setting (see \cite{Ketterer15} for the general case).

\begin{definition}\label{def:ConiSospensioni}
Let $(X,\dist_X)$ be a compact metric space with $\diam(X)\le \pi$. Let $N\in \N$ with $N\ge 2$.
\begin{itemize}
    \item The \emph{(Euclidean metric) cone over $X$} is the metric space $\big(C(X), \dist_C\big)$ where $C(X)\eqdef [0,+\infty)\times X /_{\{0\}\times X}$ and
    \[
    \dist_C((t,x),(s,y)) \eqdef \big(t^2 + s^2 -2ts \cos(\dist_X(x,y)) \big)^{\frac12}.
    \]
    There holds that $(X,\dist_X,\haus^{N-1})$ is $\RCD(N-2,N-1)$ if and only if $(C(X),\dist_C,\haus^N)$ is $\RCD(0,N)$ \cite{Ketterer15}.\\
    Denoting by $\bar{o}$ the point $\{0\}\times X$ in the quotient $C(X)$, any point $o \in C(X)$ such that there exists an isometry $j:C(X)\to C(X)$ such that $j(\bar{o}) = o$ is called a \emph{tip}, or \emph{vertex}, of the cone.

    \item The \emph{spherical suspension over $X$} is the metric space $\big(S(X), \dist_S\big)$ where $S(X)\eqdef [0,\pi]\times X /_{\{0,\pi\}\times X}$ and
    \[
    \dist_S((t,x),(s,y)) \eqdef \cos^{-1}\big(\cos(t)\,\cos(s) + \sin(t)\,\sin(s) \cos(\dist_X(x,y)) \big).
    \]
    There holds that $(X,\dist_X,\haus^{N-1})$ is $\RCD(N-2,N-1)$ if and only if $(S(X),\dist_C,\haus^N)$ is $\RCD(N-1,N)$ \cite{Ketterer15}.\\
    Points $\{0\}\times X$, $\{\pi\}\times X$ in the quotient $S(X)$ are called \emph{poles}.
\end{itemize}
\end{definition}

Observe that closed convex cones contained in the Euclidean space are $\RCD(0,N)$ cones in the sense of \autoref{def:ConiSospensioni}.

If $(X,\dist_X)$ in \autoref{def:ConiSospensioni} is a smooth Riemannian manifold $(X,g_X)$, then the cone over $X$ coincides with the (smooth out of set of tips) manifold given by the warped product
\[
\big( [0,+\infty) \times X, \d t^2+ t^2 \,g_X\big),
\]
while the spherical suspension over $X$ coincides with the (smooth out of set $\{0,\pi\}\times X \subset S(X)$) manifold given by the warped product
\[
\big( [0,\pi] \times X, \d t^2+ \sin^2(t) \, g_X\big).
\]
In particular, $C(X)$ (resp. $S(X)$) is a smooth manifold if and only if $X$ is the standard sphere $\S^{N-1}$, in which case $C(X)=\R^N$ (resp. $S(X)=\S^N$).

We shall regard $\RCD(0,N)$ cones as prototypical examples of noncompact spaces with nonnegative Ricci curvature and large volume growth. Analogously, $\RCD(N-1,N)$ spherical suspensions are seen as reference examples of compact spaces with positive Ricci curvature and diameter equal to $\pi$. In terms of the isoperimetric problem, we will see that this picture shall be confirmed by the fundamental isoperimetric inequalities we will prove in \autoref{thm:DisugisoperimetricaAVR} and \autoref{thm:LevyGromov}. Analogously, cones and spherical suspensions represent the rigidity cases in the generalized Bishop--Gromov \autoref{thm:BishopGromov} and in the generalized Bonnet--Myers \autoref{thm:BonnetMyers} we shall recall below.

\subsubsection{Gromov--Hausdorff convergence}\label{sec:GH}

A fundamental advantage of working in the $\RCD(K,N)$ class is the natural precompactness with respect to pointed measure Gromov--Hausdorff convergence. The following definition is taken from the introductory exposition of \cite{AmbrosioBrueSemola19}, see also \cite{BuragoBuragoIvanovBook, GigliMondinoSavare15}.

\begin{definition}[pGH and pmGH convergence]\label{def:GHconvergence}
A sequence $\{ (X_i, \dist_i, x_i) \}_{i\in \N}$ of pointed metric spaces is said to converge in the \emph{pointed Gromov--Hausdorff topology}, shortly \emph{pGH}, to a pointed metric space $ (Y, \dist_Y, y)$ if there exist a complete separable metric space $(Z, \dist_Z)$ and isometric embeddings
\[
\begin{split}
&\Psi_i:(X_i, \dist_i) \to (Z,\dist_Z), \qquad \forall\, i\in \N\, ,\\
&\Psi:(Y, \dist_Y) \to (Z,\dist_Z)\, ,
\end{split}
\]
such that for any $\eps,R>0$ there is $i_0(\varepsilon,R)\in\mathbb N$ such that
\[
\Psi_i(B_R^{X_i}(x_i)) \subset \left[ \Psi(B_R^Y(y))\right]_\eps,
\qquad
\Psi(B_R^{Y}(y)) \subset \left[ \Psi_i(B_R^{X_i}(x_i))\right]_\eps\, ,
\]
for any $i\ge i_0$, where $[A]_\eps\coloneqq \{ z\in Z \st \dist_Z(z,A)\leq \eps\}$ for any $A \subset Z$. In other words, for any $R>0$, $\Psi_i(B_R^{X_i}(x_i))$ converges to $\Psi(B_R^{Y}(y))$ with respect to Hausdorff distance in $(Z,\dist_Z)$.

Let $\meas_i$ and $\mu$ be measures such that $(X_i,\dist_i,\meas_i,x_i)$ and $(Y,\dist_Y,\mu,y)$ are metric measure spaces. If in addition to the previous requirements we also have $(\Psi_i)_\sharp\mathfrak{m}_i \rightharpoonup \Psi_\sharp \mu$ with respect to duality with continuous bounded functions on $Z$ with bounded support, then the convergence is said to hold in the \emph{pointed measure Gromov--Hausdorff topology}, shortly \emph{pmGH}.
\end{definition}

The previous notions of convergence can be regarded as a generalization of the convergence in Hausdorff distance of compact sets in the Euclidean space. In particular, if $X_i$ in \autoref{def:GHconvergence} is a sequence of smooth manifolds, the pGH (or pmGH) limit of $X_i$ is not a smooth manifold in general.

On the other hand, the $\RCD$ condition is naturally \emph{stable} with respect pmGH convergence, yielding the above mentioned precompactness property that we now state specialized to the noncollapsed case.

\begin{theorem}[Precompactness]\label{thm:Precompactness}
Let $N \in \N$ with $N\ge 2$, let $K\in\R$, and let $v_0>0$. Let $(X_i,\dist_i,\haus^N,x_i)$ be a sequence of $\RCD(K,N)$ spaces such that $\inf_{i} \haus^N(B_1(x_i)) \ge v_0$. Then, up to subsequence, $(X_i,\dist_i,\haus^N,x_i)$ pmGH converges to an $\RCD(K,N)$ space $(X,\dist_X,\haus^N,x)$.
\end{theorem}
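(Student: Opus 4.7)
The strategy is the classical Gromov-style argument upgraded to the measure/RCD setting: first extract a pGH subsequential limit via uniform doubling, then promote to pmGH by weak compactness of the measures, then invoke stability of the synthetic Ricci bound, and finally identify the limit reference measure as $\haus^N$.

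First I would apply the generalized Bishop--Gromov volume comparison, which is available for any $\RCD(K,N)$ space (and which is stated later as \autoref{thm:BishopGromov}). Combined with the uniform lower bound $\haus^N(B_1(x_i))\ge v_0$, Bishop--Gromov yields a uniform upper bound $\haus^N(B_R(x_i))\le C(K,N,R)$ for every $R>0$, and more importantly the \emph{uniform doubling} estimate
\[
\haus^N(B_{2r}(x))\le D(K,N,R)\,\haus^N(B_r(x))
\qquad \text{for all } x\in B_R(x_i),\ r\le R.
\]
Uniform doubling together with the fact that each $(X_i,\dist_i)$ is a proper length space implies, by Gromov's classical precompactness theorem, that $\{(X_i,\dist_i,x_i)\}$ is precompact in the pointed Gromov--Hausdorff topology. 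Extract a subsequence (not relabelled) pGH-converging to some pointed complete length space $(X,\dist_X,x)$, realized inside a common ambient metric space $(Z,\dist_Z)$ via isometric embeddings $\Psi_i,\Psi$ as in \autoref{def:GHconvergence}.

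Next I would upgrade the convergence to pmGH. The uniform upper bounds on $\haus^N(B_R(x_i))$ give local uniform boundedness of the pushed-forward measures $(\Psi_i)_\sharp \haus^N$ on $Z$; together with the support bound coming from pGH convergence, this yields tightness on bounded sets, so by Banach--Alaoglu a further subsequence admits a weak limit $\mu$ in duality with continuous bounded compactly supported functions on $Z$. Setting $\meas\eqdef \Psi^{-1}_\sharp(\mu\restriction \Psi(X))$ gives a Radon measure on $X$, and one checks $\supp\meas = X$ using the uniform lower bound $v_0$ on unit-ball measures, transported to every point of $X$ via the doubling property. Hence $(X_i,\dist_i,\haus^N,x_i)\to(X,\dist_X,\meas,x)$ in pmGH.

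At this point stability of the synthetic curvature-dimension condition under pmGH convergence (proved in \cite{GigliMondinoSavare15, AmbrosioGigliSavare14, AmbrosioMondinoSavare15}, and referenced in the preliminaries) implies that $(X,\dist_X,\meas,x)$ is an $\RCD(K,N)$ space. The last and most delicate step is to identify $\meas$ with the Hausdorff measure $\haus^N$ computed with respect to $\dist_X$, i.e.\ to show that the limit is \emph{noncollapsed}. I would invoke the characterization of noncollapsed $\RCD(K,N)$ spaces via the value of the density $\lim_{r\to 0^+} \haus^N(B_r(\cdot))/(\omega_N r^N)$: the uniform lower bound $\haus^N(B_1(x_i))\ge v_0$ together with Bishop--Gromov passes to the limit (using continuity of the reference measure on balls with negligible boundary) to give $\meas(B_r(y))\ge c(K,N,v_0)r^N$ at every $y\in X$, and conversely Bishop--Gromov also holds in the limit. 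By the noncollapsing criterion of De~Philippis--Gigli (or equivalently Kitabeppu), volume-convergence of balls in a pmGH limit of noncollapsed spaces with uniform lower volume bound forces $\meas=\haus^N$; this is exactly the content needed to conclude.

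The main obstacle is precisely this last step: the weak limit $\meas$ a priori is only some Radon measure, and identifying it with $\haus^N$ requires more than soft compactness — it uses the rigidity theory of noncollapsed $\RCD$ spaces, i.e.\ the sharp volume-density characterization due to De Philippis--Gigli. Everything before that is routine Gromov-style compactness plus stability of $\RCD$; it is the noncollapsing conclusion that crucially depends on the uniform lower bound $v_0$ and on the deeper structure theory of $\RCD(K,N)$ spaces.
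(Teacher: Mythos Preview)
Your proposal is correct and follows essentially the same route as the paper. The paper's proof sketch lists the three ingredients as: (i) stability of the $\RCD(K,N)$ class under pmGH convergence, (ii) uniform doubling from Bishop--Gromov giving Gromov precompactness, and (iii) stability of noncollapsedness from \cite[Theorem 1.2]{DePhilippisGigli18}; you reproduce exactly these steps, simply spelling out in more detail how one upgrades pGH to pmGH via weak compactness of the pushed-forward measures and correctly identifying the De~Philippis--Gigli noncollapsing criterion as the nontrivial point.
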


In \autoref{thm:Precompactness} and in the sequel, assumptions like $\inf_{x} \haus^N(B_1(x)) \ge v_0>0$ are regarded as \emph{noncollapsing hypotheses}, as they guarantee the pmGH limit to be endowed with Hausdorff measure, in accordance with \autoref{def:RCD}.

\autoref{thm:Precompactness} follows from the fact that: (i) the class of $\RCD(K,N)$ spaces is closed with respect to pmGH convergence \cite{LottVillani, Sturm1, Sturm2, AmbrosioGigliSavare14, GigliOnTheDiffStructure, GigliMondinoSavare15}, (ii) the reference measure on an $\RCD(K,N)$ space is locally uniformly doubling (this follows, for instance, from \autoref{thm:BishopGromov} below) and thus Gromov precompactness applies (see \cite[Sect. 5.A]{Gromovmetric} or \cite{Petersen2016}), and (iii) the stability of noncollapsedness \cite[Theorem 1.2]{DePhilippisGigli18} (see also \cite{Colding97, ChCo1}).

\medskip

In the following, we shall very often consider pointed sequences of the form $(X,\dist,\haus^N,x_i)$ for a fixed $\RCD(K,N)$ space $X$ and for a diverging sequence of points $x_i \in X$, i.e., such that $\limsup_i\dist(o,x_i)=+\infty$ for any $o \in X$. In this case, any pGH limit of such a sequence is called \emph{a limit at infinity of $X$}.

\medskip

A celebrated result concerning a sequence of spaces as in \autoref{thm:Precompactness} is that the volume of balls centered at points $x_i$ converges along the sequence, only requiring convergence in pGH sense.

\begin{theorem}[{Volume convergence, \cite[Theorem 1.2, Theorem 1.3]{DePhilippisGigli18} after \cite{Colding97, CheegerColdingI}}]\label{thm:VolumeConvergence}
Let $N \in \N$ with $N\ge 2$, let $K\in\R$, and let $v_0>0$. Let $(X_i,\dist_i,\haus^N,x_i)$ be a sequence of $\RCD(K,N)$ spaces such that $\inf_{i} \haus^N(B_1(x_i)) \ge v_0$. Assume that $(X_i,\dist_i,x_i)$ converges in pGH sense to a metric space $(X,\dist_X,x)$. Then $(X,\dist_X,\haus^N,x)$ is $\RCD(K,N)$, convergence holds in pmGH sense, and $\lim_i \haus^N(B_R(x_i)) = \haus^N(B_R(x))$ for any $R>0$.
\end{theorem}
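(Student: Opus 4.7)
The plan is to extract a measure-convergent subsequence via Gromov precompactness, identify the limit measure as $\haus^N$ using the noncollapsing hypothesis, and deduce the volume convergence from weak measure convergence plus a Bishop--Gromov argument to control boundary spheres.

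\medskip

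\noindent\textbf{Step 1: Upgrade pGH to pmGH.} First I would apply the Bishop--Gromov inequality (available on any $\RCD(K,N)$ space) to show that the reference measures $\haus^N$ on the spaces $X_i$ are uniformly locally doubling, with constants depending only on $K,N,v_0$. This is the standing hypothesis for Gromov's precompactness theorem for metric measure spaces, so I can extract a subsequence along which $(X_i,\dist_i,\haus^N,x_i)$ converges in pmGH sense to \emph{some} limit $(X,\dist_X,\mu,x)$, where $\mu$ is a Radon measure on $X$ whose support is all of $X$. The distance component of this limit must agree with the given pGH limit $(X,\dist_X,x)$ by uniqueness of pGH limits up to isometry, so only the identification of $\mu$ is new information. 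Stability of $\RCD(K,N)$ under pmGH convergence (cited after Definition~\ref{def:RCD}) then gives that $(X,\dist_X,\mu,x)$ is an $\RCD(K,N)$ space.

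\medskip

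\noindent\textbf{Step 2: Identify $\mu=\haus^N$.} This is the core of the theorem and the main obstacle. The assumption $\haus^N(B_1(x_i))\ge v_0$, combined with lower semicontinuity of mass of open sets under weak convergence of measures, yields $\mu(B_1(x))\ge v_0>0$. Combined with the Bishop--Gromov inequality on the limit, this forces $\mu$ to be locally comparable with $\haus^N$ (in particular $\mu\ll\haus^N$), i.e.\ the limit space is \emph{weakly noncollapsed}. At this point I would invoke the nontrivial structural result of De Philippis--Gigli \cite{DePhilippisGigli18} (or the further refinement \cite{BrenaGigliHondaZhu} quoted in the footnote to Definition~\ref{def:RCD}) asserting that a weakly noncollapsed $\RCD(K,N)$ space is noncollapsed, i.e.\ $\mu=c\,\haus^N$ for a constant $c>0$. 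The normalization $c=1$ can then be read off by testing the pmGH convergence on small balls centered at regular points and comparing with the exact Bishop volume at the tangent cone $\R^N$. This identifies $\mu=\haus^N$, and simultaneously shows that the subsequence extraction in Step 1 was unnecessary: every pGH-convergent subsequence must carry the same limit measure $\haus^N$, so the pmGH convergence holds along the full original sequence.

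\medskip

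\noindent\textbf{Step 3: Convergence of ball volumes for every $R>0$.} From pmGH convergence and weak convergence of measures, one has $\haus^N(B_R(x))\le\liminf_i\haus^N(B_R(x_i))$ and $\haus^N(\overline{B_R(x)})\ge\limsup_i\haus^N(\overline{B_R(x_i)})$ by standard Portmanteau-type arguments in the common realization space of Definition~\ref{def:GHconvergence}. It therefore suffices to show $\haus^N(\partial B_R(x))=0$. Bishop--Gromov gives that $r\mapsto \haus^N(B_r(x))/v_{K,N}(r)$ is monotone non-increasing, hence $r\mapsto\haus^N(B_r(x))$ is the product of a monotone function with a smooth one, and therefore has at most countably many jump discontinuities. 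For an $R$ which is not one of these (i.e.\ almost every $R$) we directly obtain $\lim_i\haus^N(B_R(x_i))=\haus^N(B_R(x))$. To promote this to \emph{every} $R>0$, I would sandwich $R$ between continuity radii $R-\delta$ and $R+\delta$ with $\delta\downarrow 0$, and use the uniform Bishop--Gromov upper bound on $\haus^N(B_{R+\delta}(x_i)\setminus B_{R-\delta}(x_i))$, which tends to $0$ with $\delta$ uniformly in $i$.

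\medskip

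\noindent\textbf{Main obstacle.} The hard step is Step~2: the passage from weak noncollapsing (which follows formally from the lower semicontinuity of measure convergence on open sets) to genuine noncollapsing $\mu=\haus^N$. All other steps are essentially formal manipulations of Gromov--Hausdorff convergence combined with Bishop--Gromov; Step~2 is the place where genuine structure theory of $\RCD(K,N)$ spaces enters.
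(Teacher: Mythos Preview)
The paper does not give a proof of this theorem; it is stated in the preliminaries and attributed to \cite[Theorem~1.2, Theorem~1.3]{DePhilippisGigli18} (after \cite{Colding97, CheegerColdingI}), so there is nothing in the paper to compare your argument against.

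That said, a comment on your Step~2. You correctly flag it as the only nontrivial step, but the way you resolve it is problematic. The implication ``weakly noncollapsed $\Rightarrow$ noncollapsed'' was only \emph{conjectured} in \cite{DePhilippisGigli18} and proved later in \cite{BrenaGigliHondaZhu}; the original volume convergence theorem you are trying to prove was established without it, by a direct analysis of the density function $\theta_N(x)=\lim_{r\to 0}\meas(B_r(x))/(\omega_N r^N)$ and its lower semicontinuity under pmGH convergence. Invoking \cite{BrenaGigliHondaZhu} is therefore anachronistic, and since that paper builds on the machinery of \cite{DePhilippisGigli18}, you are at risk of circularity. Your outline is a plausible \emph{reduction} to known deep facts, but it does not reproduce the actual argument and does not stand on its own as a proof of the cited theorem.

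Steps~1 and~3 are fine and standard.
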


Comparing with \autoref{thm:Precompactness}, roughly speaking, the previous \autoref{thm:VolumeConvergence} tells that, assuming pGH-convergence, then pmGH-convergence to the limit space endowed with Hausdorff measure is equivalent to the convergence of the volume of balls.

\subsection{Sets of finite perimeter}\label{sec:SetsFinitePerimeter}

There is a well-established theory of functions of bounded variations on metric measure spaces \cite{Ambrosio02, Miranda03, DiMarinoThesis}, allowing the treatment of sets of finite perimeter in this generalized setting.

Let $(X,\dist,\meas)$ be a metric measure space. The definition of $BV$ function is then given by relaxation by approximation with locally Lipschitz functions, thus extending the classical notion from Euclidean spaces or Riemannian manifolds \cite{AFP00}.

\begin{definition}[$\rm BV$ functions and perimeter on m.m.s.]\label{def:BVperimetro}
Let $(X,\dist,\meas)$ be a metric measure space.  Given $f\in L^1_{\mathrm{loc}}(X,\meas)$ we define
\[
|Df|(A) \eqdef \inf\left\{\liminf_i \int_A \lip f_i \de\meas \st \text{$f_i \in {\rm Lip}_{\mathrm{loc}}(A),\,f_i \to f $ in $L^1_{\mathrm{loc}}(A,\meas)$} \right\}\, ,
\]
for any open set $A\subset X$.
A function \(f\in L^1_{\mathrm{loc}}(X,\meas)\) is of \emph{local bounded variation}, briefly \(f\in{\rm BV}_{\mathrm{loc}}(X)\),
if \(|Df|(A)<+\infty\) for every \(A\subset X\) open bounded.
A function $f \in L^1(X,\meas)$ belongs to the space of \emph{functions of bounded variation} ${\rm BV}(X)={\rm BV}(X,\dist,\meas)$ if $|Df|(X)<+\infty$. 

If $E\subset\X$ is a Borel set and $A\subset X$ is open, we  define the \emph{perimeter $\Per(E,A)$  of $E$ in $A$} by
\[
\Per(E,A) \eqdef \inf\left\{\liminf_i \int_A \lip u_i \de\meas \st \text{$u_i \in {\rm Lip}_{\mathrm{loc}}(A),\,u_i \to \nchi_E $ in $L^1_{\mathrm{loc}}(A,\meas)$} \right\}\, ,
\]
in other words \(\Per(E,A)\coloneqq|D\nchi_E|(A)\).
We say that $E$ has \emph{locally finite perimeter} if $\Per(E,A)<+\infty$ for every open bounded set $A$. We say that $E$ has \emph{finite perimeter} if $\Per(E,X)<+\infty$, and we denote $\Per(E)\eqdef \Per(E,X)$.
\end{definition}

Let us remark that when $f\in{\rm BV}_{\mathrm{loc}}(X,\dist,\meas)$ or $E$ is a set with locally finite perimeter, the set functions $|Df|, \Per(E,\cdot)$ above are restrictions to open sets of Borel measures that we still denote by $|Df|, \Per(E,\cdot)$, see \cite{AmbrosioDiMarino14, Miranda03}.

\medskip

Recalling \autoref{def:GHconvergence}, it is naturally possible to speak of convergence of sets along sequences of converging spaces.

\begin{definition}[$L^1$-strong and $L^1_{\mathrm{loc}}$ convergence]\label{def:L1strong}
Let $\{ (X_i, \dist_i, \mathfrak{m}_i, x_i) \}_{i\in \N}$  be a sequence of pointed metric measure spaces converging in the pmGH sense to a pointed metric measure space $ (Y, \dist_Y, \mu, y)$ and let $(Z,\dist_Z)$ be a realization as in \autoref{def:GHconvergence}.

We say that a sequence of Borel sets $E_i\subset X_i$ such that $\mathfrak{m}_i(E_i) < +\infty$ for any $i \in \N$ converges \emph{in the $L^1$-strong sense} to a Borel set $F\subset Y$ with $\mu(F) < +\infty$ if $\mathfrak{m}_i(E_i) \to \mu(F)$ and $\chi_{E_i}\mathfrak{m}_i \rightharpoonup \chi_F\mu$ with respect to the duality with continuous bounded functions with bounded support on $Z$.

We say that a sequence of Borel sets $E_i\subset X_i$ converges \emph{in the $L^1_{\mathrm{loc}}$-sense} to a Borel set $F\subset Y$ if $E_i\cap B_R(x_i)$ converges to $F\cap B_R(y)$ in $L^1$-strong for every $R>0$.
\end{definition}

Sets of finite perimeter in the $\RCD$ framework enjoy the usual precompactness, approximation, and lower semicontinuity properties with respect to $L^1_{\rm loc}$ convergence.

\begin{remark}[Precompactness and lower semicontinuity of finite perimeter sets along pmGH converging sequences]\label{rem:SemicontPerimeterConverging}
Let $K\in\mathbb R$, $N\geq 1$, and $\{(X_i,\dist_i,\meas_i,x_i)\}_{i\in\mathbb N}$ be a sequence of $\RCD(K,N)$ metric measure spaces converging in the pmGH sense to $(Y,\dist_Y,\mu,y)$. Let $(Z,\dist_Z)$ be a realization of the convergence. Then, the following hold, compare with \cite[Proposition 3.3, Corollary 3.4, Proposition 3.6, Proposition 3.8]{AmbrosioBrueSemola19}, and \cite{AmbrosioHonda17}.
\begin{itemize}
    \item For any sequence of Borel sets $E_i\subset X_i$ with 
    $$
    \sup_{i\in\mathbb N}|D\chi_{E_i}|(B_R(x_i))<+\infty, \qquad \forall\,R>0,
    $$
    there exists a subsequence $i_k$ and a Borel set $F\subset Y$ such that $E_{i_k}\to F$ in $L^1_{\mathrm{loc}}$.
    \item Let $F\subset Y$ be a bounded set of finite perimeter. Then there exist a subsequence $i_k$ and uniformly bounded sets of finite perimeter $E_{i_k}\subset X_{i_k}$ such that $E_{i_k}\to F$ in $L^1$-strong and $|D\chi_{E_{i_k}}|(X_{i_k})\to |D\chi_F|(Y)$ as $k\to+\infty$.
\end{itemize}
\end{remark}

In the last years, several fine properties of sets of finite perimeter in $\RCD$ spaces have been proved \cite{BPSrectifiabilityJEMS, BPSGaussGreen, BruePasqualettoSemola20}, generalizing the Euclidean theory \cite{AFP00}. Given a Borel set \(E\subset X\) in a noncollapsed \(\RCD(K,N)\) space \((X,\dist,\haus^N)\) and any \(t\in[0,1]\), we denote by \(E^{(t)}\) the set
of \emph{points of density \(t\)} of \(E\), namely
\[
E^{(t)}\coloneqq\bigg\{x\in X\;\bigg|\;\lim_{r\to 0}\frac{\haus^N(E\cap B_r(x))}{\haus^N(B_r(x))}=t\bigg\}\, .
\]
The \emph{essential boundary} of \(E\) is defined as \(\partial^e E\coloneqq X\setminus(E^{(0)}\cup E^{(1)})\). It is also possible to speak of \emph{reduced boundary} \(\mathcal F E\subset\partial^e E\) of a set of locally finite perimeter \(E\subset X\), that is defined as the set of the points of \(X\) where the unique tangent to \(E\), up to isomorphism, is the half-space, see \cite[Definition 4.1]{AmbrosioBrueSemola19} for the precise definition.\\
It was proved in \cite{BPSrectifiabilityJEMS}, after \cite{Ambrosio02,AmbrosioBrueSemola19}, that the perimeter measure has the representation
\begin{equation}\label{eq:RepresentationPerimeter}
\Per(E,\cdot)=\haus^{N-1}|_{\mathcal F E},
\end{equation}
Moreover, according to \cite[Proposition 4.2]{BPSGaussGreen}, 
\begin{equation}\label{eq:DeGFederer}
\mathcal F E=E^{(1/2)}=\partial^e E
\qquad\text{ up to }\mathcal H^{N-1}\text{-null sets}\, ,
\end{equation}
generalizing De Giorgi's and Federer's theorems to the $\RCD$ setting, see \cite[Theorem 3.59, Theorem 3.61]{AFP00}. Let us mention that the representation of the perimeter measure is today well-understood also in the case of collapsed $\RCD$ spaces, see \cite{BPSGaussGreen} and \cite[Section 3]{AntonelliBrenaPasqualettoRankOne}.

\begin{remark}\label{rem:RelativePerimeter}
    Let $(X,|\cdot|, \haus^N)$ be a convex body in $\R^N$. It readily follows from \eqref{eq:RepresentationPerimeter} and \eqref{eq:DeGFederer} that the perimeter of a set $E\subset X$ is automatically the \emph{relative} perimeter of $E$ in the interior of $X$.
\end{remark}

\subsection{Isoperimetric problem, profile and sets}

\begin{definition}[Isoperimetric profile and isoperimetric sets]
Let $(X,\dist,\meas)$ be a metric measure space. We define the \emph{isoperimetric profile}
\[
I_X(V) \eqdef \inf\left\{P(E) \st E\subset X \text{ Borel, } \meas(E)=V \right\}
\]
for any $V\in(0,\meas(X))$. Set also $I_X(0)\eqdef 0$, and $I_X(\meas(X))\eqdef 0$ if $\meas(X)<+\infty$.

A Borel set $E\subset X$ such that $\meas(E) \in (0,\meas(X))$ and $P(E)= I_X(\meas(E))$ is called \emph{isoperimetric set}, or \emph{isoperimetric region}.
\end{definition}

\begin{remark}
    Let $N \in \N$ with $N\ge 2$, and let $K\in\R$. Let $(X,\dist,\haus^N)$ be an $\RCD(K,N)$ space. Assume that there exists $v_0>0$ such that $\haus^N(B_1(x)) \ge v_0$ for any $x \in X$. Then the isoperimetric profile $I_X:[0,\haus^N(X))\to [0,+\infty)$ is a $(1-\tfrac1N)$-H\"{o}lder continuous real valued function by \cite[Theorem 2]{FloresNardulli20}, after \cite[Lemme 6.2]{Gallotast}, \cite[Lemma 6.9]{Milmanconvexity} (see also the related argument in \cite[Lemma 3.4]{Buser82}). In fact, exploiting the main result \autoref{thm:DifferentialInequalitiesProfile}, one can prove interior local Lipschitz regularity, see \autoref{rem:RegularityNew} and \autoref{rem:InfPositivo} below.\\
    On the other hand, it is possible to construct complete Riemannian manifolds with discontinuous isoperimetric profile for any dimension greater or equal to $2$, see \cite{NardulliPansu, PapasogluSwenson}.\\
    If $\haus^N(X)<+\infty$, then $I_X$ is continuous on the whole interval $[0,\haus^N(X)]$ and, since perimeter is invariant with respect to complement, $I_X$ is symmetric around $\haus^N(X)/2$.
\end{remark}

A fundamental question about isoperimetric sets addresses their regularity. While on a Riemannian manifold it makes sense to speak about finer regularity properties of isoperimetric sets, in the nonsmooth setting we can at least investigate their topological regularity.

\begin{theorem}[{Topological regularity, \cite[Theorem 1.4]{AntonelliPasqualettoPozzetta21} \& \cite{AntonelliPasqualettoPozzettaViolo}}]\label{thm:Regularity}
Let $N \in \N$ with $N\ge 2$, and let $K\in\R$. Let $(X,\dist,\haus^N)$ be an $\RCD(K,N)$ space. Assume that there exists $v_0>0$ such that $\haus^N(B_1(x)) \ge v_0$ for any $x \in X$. Let $E\subset X$ be an isoperimetric set.\\
Then $E^{(1)}$ is open and bounded, $\partial^e E = \partial E^{(1)}$, and $E^{(0)}$ is open.
\end{theorem}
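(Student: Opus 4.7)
The plan is to extract all three conclusions from a single structural property: that $E$ is a $\Lambda$-\emph{perimeter minimizer} at small scales. Explicitly, I would first exhibit constants $\Lambda,r_0>0$ such that
\[
\Per(E)\le \Per(F)+\Lambda\,\haus^N(F\triangle E)
\]
whenever $F\subset X$ is Borel with $F\triangle E$ contained in some ball of radius $\le r_0$. The standard recipe is: fix any $x_0\in\partial^e E$, construct a one-parameter family $\{E_s\}$ of deformations of $E$ supported inside a small ball $B_{r_0}(x_0)$ satisfying $\haus^N(E_s)-\haus^N(E)=s$ and $|\Per(E_s)-\Per(E)|\le \Lambda|s|$ for $|s|$ small, and then compare $F$ with $F_s$ where the parameter $s$ restores $\haus^N(F)$ to $\haus^N(E)$. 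In smooth manifolds the deformation is the normal flow; in the $\RCD$ setting one instead flows along the gradient of a suitably chosen Lipschitz function whose level sets cut transversally through $\partial^e E$ near $x_0$, the perimeter cost being controlled via the first-variation formula available from the nonsmooth calculus.

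Once $\Lambda$-perimeter minimality is in hand, the classical De Giorgi iteration for quasi-minimizers on PI spaces --- based on Bishop--Gromov doubling and the relative isoperimetric inequality, both available in $\RCD(K,N)$ --- yields density estimates: there exist $\theta\in(0,1/2)$ and $r_1\in(0,r_0]$ such that for every $x\in\partial^e E$ and every $r\le r_1$,
\[
\theta\le\frac{\haus^N(E\cap B_r(x))}{\haus^N(B_r(x))}\le 1-\theta,\qquad \Per(E;B_r(x))\ge \theta r^{N-1},
\]
with the uniform bound $\haus^N(B_1(x))\ge v_0$ making the estimates uniform in $x$. From these, openness of $E^{(1)}$ and $E^{(0)}$ is immediate: if $x\in E^{(1)}$ were the limit of points $x_n\in\partial^e E$, the density estimate for $E^c$ at $x_n$ would, via doubling, pass to a strictly positive lower density of $E^c$ at $x$, contradicting $x\in E^{(1)}$; the argument for $E^{(0)}$ is symmetric. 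Since $X=E^{(1)}\sqcup E^{(0)}\sqcup\partial^e E$ by definition, openness of $E^{(1)}$ and $E^{(0)}$ forces $\partial^e E$ to be closed. Combined with the lower density estimate $\haus^N(E\cap B_r(x))>0$ for $x\in\partial^e E$ --- which forces $x\in\overline{E^{(1)}}$, because $E$ coincides with $E^{(1)}$ up to $\haus^N$-null sets --- one reads off $\partial^e E=\partial E^{(1)}$.

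For boundedness, fix $o\in X$, put $V(r):=\haus^N(E\setminus\bar B_r(o))$, and note $V(r)\to 0$. Competing $E$ with the set obtained by deleting $E\setminus\bar B_r(o)$ and restoring the missing volume through the local perturbation of the first step gives, for a.e. sufficiently large $r$,
\[
\Per(E;X\setminus\bar B_r(o))\le \haus^{N-1}(E\cap\partial B_r(o))+\Lambda V(r).
\]
Combining with the small-volume isoperimetric bound $\Per(E\setminus\bar B_r(o))\ge c\,V(r)^{(N-1)/N}$ --- a consequence of the uniform ball lower bound, which produces a Euclidean-type local isoperimetric inequality --- and the slicing identity $-V'(r)=\haus^{N-1}(E\cap\partial B_r(o))$ yields
\[
-V'(r)\ge \tfrac{c}{2}\,V(r)^{(N-1)/N}-\tfrac{\Lambda}{2}V(r).
\]
For $V(r)$ small this reads $-V'\ge c'\,V^{(N-1)/N}$, an ODE forcing $V$ to vanish past a finite radius. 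Together with openness of $E^{(1)}$, this gives $E^{(1)}$ bounded.

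The main obstacle I anticipate is precisely the first step: constructing the local volume-changing perturbation with linear perimeter cost in the $\RCD$ framework. On smooth manifolds this is the elementary normal flow near a smooth boundary point, but in the nonsmooth setting one has no a priori regularity of $\partial^e E$ to lean on, and the perturbation has to be produced from the ambient nonsmooth calculus. Everything else --- the density estimates, the openness arguments, the identification $\partial^e E=\partial E^{(1)}$, and the ODE-based boundedness --- becomes routine once this perturbation is in place.
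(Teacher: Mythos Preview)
Your outline is correct and aligns with the strategy of the cited references as summarized in the paper: the crucial nonsmooth ingredient is exactly the deformation property you isolate as the main obstacle (cf.\ the paper's discussion immediately after the statement, pointing to \cite[Theorem 1.1, Theorem 2.35]{AntonelliPasqualettoPozzetta21}), and once that is available the density estimates, openness, and the ODE-based boundedness follow the classical Gonzalez--Massari--Tamanini route. Note that in the paper's presentation $\Lambda$-minimality is recorded as a \emph{consequence} of the regularity theorem rather than as its entry point, but this is only packaging---both orderings rest on the same deformation lemma.

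One small gap: your openness argument only shows that points of $E^{(1)}$ are not limits of points of $\partial^e E$ (and symmetrically for $E^{(0)}$), which yields that $\partial^e E$ is closed but does \emph{not} by itself force $E^{(1)}$ and $E^{(0)}$ to be open separately---you still need to exclude $x\in E^{(1)}$ being a limit of points of $E^{(0)}$, and the density bound stated only at points of $\partial^e E$ says nothing there. The fix is already inside the De Giorgi iteration you invoke: it actually proves the stronger clean-up statement (if $\haus^N(E\cap B_r(x))/\haus^N(B_r(x))>1-\varepsilon_0$ at some scale $r<r_1$ then $\haus^N(E^c\cap B_{r/2}(x))=0$), from which openness of $E^{(1)}$ is immediate, and symmetrically for $E^{(0)}$.
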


Thanks to the recent \cite{AntonelliPasqualettoPozzettaViolo}, the previous result also holds in the generality of possibly collapsed $\RCD(K,N)$ spaces $(X,\dist,\meas)$.

Topological regularity for isoperimetric sets in the Euclidean spaces was firstly proved in \cite{GonzalezMassariTamanini}, and subsequently generalized in \cite{Xia05}. In the proof of \autoref{thm:Regularity}, it plays a crucial role the so-called \emph{deformation property} which allows to increase or decrease the volume of sets of finite perimeter controlling the change of perimeter in terms of the change of volume, see \cite[Theorem 1.1, Theorem 2.35]{AntonelliPasqualettoPozzetta21}. Deformation properties, well-known in the smooth context \cite[Lemma 17.21]{MaggiBook}, have great importance in several arguments, see \cite[VI.2(3)]{AlmgrenBook}, \cite[Lemma 13.5]{MorganBook}, \cite[Lemma 4.5]{GalliRitore}, \cite[Lemma 3.6]{Pozuelo}, \cite{CintiPratelli, PratelliSaracco}.

We mention that a version of \autoref{thm:Regularity} holds for local volume constrained minimizers of quasi-perimeters, that is, functionals given by the sum of the perimeter and of a suitable $L^1$-continuous term. Also, \autoref{thm:Regularity} implies further minimality properties on isoperimetric sets, like $\Lambda$-minimality, see \cite[Theorem 3.24]{AntonelliPasqualettoPozzetta21} and \cite[Chapter 21]{MaggiBook}, and thus density estimates \cite[Proposition 3.27]{AntonelliPasqualettoPozzetta21} (see also \autoref{rem:RegularityNew} below).

We further observe that exploiting uniform $\Lambda$-minimality properties of isoperimetric sets \cite[Corollary 4.17]{APPSa}, one can adapt arguments from the regularity theory for perimeter minimizers developed in \cite{MoS21} to prove that the Hausdorff dimension of the singular set of the boundary of an isoperimetric set, i.e., the set of points such that a blowup is not a halfspace in $\R^N$, is no more than $N-3$ (see \cite[Theorem 2.19]{AntonelliPozzettaAlexandrov}). Differently from the smooth category, such estimate is sharp (cf. \cite[Remark 1.8]{MoS21}).\\
In the class of smooth Riemannian manifolds, higher regularity of isoperimetric sets is well-understood and boundaries of isoperimetric sets are smooth hypersurfaces out of a set of codimension $8$ \cite{morgan2003regularity}. A first example of a nonsmooth isoperimetric set has been recently given in \cite{NiuNonsmoothIsoperimetrico}.

\medskip

Taking into account \autoref{thm:Regularity}, from now on we will always assume that if $E$ is an isoperimetric set as in the assumptions of \autoref{thm:Regularity}, then $E=E^{(1)}$; in particular $E$ is open, bounded, $P(E,\cdot)=\haus^{N-1}\res \partial E$ and $E^{(0)}$ is the complement of $\overline{E}$.

\section{Properties of the isoperimetric profile on spaces with Ricci lower bounds}\label{sec:Main}

\subsection{Sharp differential inequalities of the isoperimetric profile}\label{sec:SharpDiffIneq}

Let $I\subset \R$ be an open interval and let $f:I\to \R$, $g:{\rm Im}\,(f)\to\R$ be continuous functions. We denote
\begin{equation}\label{eq:DefD2}
\begin{split}
\overline{D}^2f(x) & \eqdef \limsup_{h\to 0^+} \frac{f(x+h)+f(x-h)-2f(x)}{h^2}.
\end{split}
\end{equation}
Moreover we say that
\begin{itemize}
    \item $f''\le g(f)$ in the viscosity sense on $I$ if for any $x \in I$ and any smooth function $\varphi$ defined in a neighborhood of $x$ such that $\varphi - f$ has a local maximum at $x$, there holds $\varphi''(x)\le g(f(x))$;
    
    \item $f''\le g(f)$ in the sense of distributions on $I$ if
		\[
		\int f\varphi'' \de x \le \int g(f) \, \varphi \de x,
		\]
	for every $\varphi \in C^\infty_c(I)$ with $\varphi\ge 0$.
\end{itemize}
We recall that in the definition of viscosity solution it is equivalent to consider for any $x \in I$ smooth functions $\varphi$ defined in a neighborhood of $x$ such that $\varphi \le f$ and $\varphi(x)=f(x)$, see, e.g., \cite[Remark 5.6]{AmbrosioCarlottoMassaccesi}.

\medskip

The next result states the sharp differential inequalities satisfied by the isoperimetric profile.

\begin{theorem}[{Sharp differential inequalities of the isoperimetric profile \cite[Theorem 1.1]{APPSa}}]\label{thm:DifferentialInequalitiesProfile}
Let $N \in \N$ with $N\ge 2$, and let $K\in\R$. Let $(X,\dist,\haus^N)$ be an $\RCD(K,N)$ space. Assume that there exists $v_0>0$ such that $\haus^N(B_1(x)) \ge v_0$ for any $x \in X$.
Let $\psi\eqdef I_X^{\frac{N}{N-1}}$. Then $\psi$ solves
\begin{equation}\label{eq:DifferentialInequalitiesProfile}
    \psi'' \le - \frac{K\, N}{N-1} \psi^{\frac{2-N}{N}},
\end{equation}
in the sense of distributions on $(0,\haus^N(X))$, equivalently in the viscosity sense on $(0,\haus^N(X))$, equivalently $\overline{D}^2 \psi(V)\le - \tfrac{K\, N}{N-1} \psi^{\frac{2-N}{N}}(V)$ for any $V \in (0,\haus^N(X))$.
\end{theorem}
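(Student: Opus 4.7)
The plan is to prove the pointwise form at each fixed $V_0 \in (0, \haus^N(X))$: namely $\overline{D}^2 \psi(V_0) \le -\tfrac{KN}{N-1}\psi(V_0)^{(2-N)/N}$. The equivalence of this pointwise statement with the distributional and viscosity formulations is a standard fact about upper envelopes of smooth supersolutions of a concave ODE, to be handled in the appendix. The first move is to realize $I_X(V_0)$ by a \emph{generalized} isoperimetric configuration: the asymptotic mass decomposition (\autoref{thm:AsymptoticMassDecomposition}) yields noncollapsed $\RCD(K,N)$ spaces $Y_0 = X, Y_1, \ldots, Y_J$ (with $Y_j$, $j\ge 1$, a pmGH limit at infinity of $X$) and isoperimetric sets $E_j\subset Y_j$ satisfying $\sum_j\haus^N(E_j)=V_0$ and $\sum_j P(E_j)=I_X(V_0)$. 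By \autoref{thm:Regularity} each $E_j$ is open bounded with $P(E_j,\cdot)=\haus^{N-1}\measrestr\partial E_j$, so a normal perturbation of each $\partial E_j$, combined with the remaining fixed components, produces a genuine competitor for the generalized isoperimetric problem at nearby volumes $V_0 \pm s$.

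Next, one invokes the mean curvature barriers theorem (\autoref{thm:MeanCurvatureBarriers}) to equip each $\partial E_j$ with a common weak mean curvature value $H$: the value is necessarily shared across components, for otherwise moving an infinitesimal amount of volume from a component with strictly larger outer barrier to one with strictly smaller outer barrier would strictly decrease the total perimeter, contradicting the minimality of the generalized configuration. Using the barriers to build volume-rate-one two-sided perturbations $\{F_s^{\pm}\}_{s\in[0,\eta)}$ with $\haus^N(F_s^{\pm}) = V_0 \pm s$, distributing the volume change across the components in proportion to $P(E_j)$ (which is optimal by a Cauchy--Schwarz step minimizing $\sum a_j^2/P(E_j)$ subject to $\sum a_j = 1$), and combining the $\RCD(K,N)$ Laplacian comparison for the distance from $\bigcup_j \partial E_j$ (a consequence of Bochner) with the trace inequality $(\mathrm{tr}\,\mathrm{II})^2 \le (N-1)|\mathrm{II}|^2$ built into the $N$-Bochner identity, one obtains the sharp expansion
\[
P(F_s^{\pm}) \le I_X(V_0) \pm Hs - \frac{s^2}{2\, I_X(V_0)}\left(\frac{H^2}{N-1} + K\right) + o(s^2).
\]

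Because $F_s^{\pm}$ is a competitor, $I_X(V_0 \pm s) \le P(F_s^{\pm})$; summing the $\pm$ inequalities eliminates the linear $\pm Hs$ terms and yields $\overline{D}^2 I_X(V_0) \le -\bigl(H^2/(N-1) + K\bigr)/I_X(V_0)$. Passing to $\psi = I_X^{N/(N-1)}$ via the chain rule for second differences, the exponent $N/(N-1)$ is tuned precisely so that the $\tfrac{H^2}{N-1}$ contribution coming from $I_X''$ (with coefficient $\tfrac{N}{N-1} I_X^{1/(N-1)}$) is cancelled by the $(I_X')^2 = H^2$ contribution from the second derivative of the power map (with coefficient $\tfrac{N}{(N-1)^2} I_X^{(2-N)/(N-1)}$); only the $-K$ term survives, giving $\overline{D}^2 \psi(V_0) \le -\tfrac{KN}{N-1}\psi(V_0)^{(2-N)/N}$, which is the required bound.

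The hard part is the second step: producing valid two-sided barrier-based perturbations and extracting their sharp second-order perimeter expansion in the nonsmooth $\RCD(K,N)$ setting. On a smooth manifold one simply expands the Jacobi operator along a smooth normal flow; here neither $\partial E_j$ nor its normal flow are a priori smooth, so the Jacobi analysis must be replaced by model CMC foliations whose evolution is computed in the model space and transferred to $Y_j$ via the $\RCD$ Laplacian comparison. Matching the constants with those of the smooth Jacobi computation, and ensuring the barrier construction yields genuine two-sided competitors on both sides of $V_0$ so that the $\pm Hs$ terms cancel exactly in the symmetric difference, is the analytic crux of the argument.
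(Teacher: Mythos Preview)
Your strategy is essentially the paper's: asymptotic mass decomposition \(\rightarrow\) mean curvature barriers \(\rightarrow\) Heintze--Karcher bounds on tubular neighborhoods \(\rightarrow\) second-order differential inequality. The packaging differs only in that you parameterize perturbations by volume $s$ and aim straight for $\overline{D}^2\psi$, whereas the paper parameterizes by signed-distance time $t$ and works in the viscosity formulation with a smooth $\varphi\le I_X$ touching at $V_0$.

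There are, however, two genuine gaps in your write-up. First, the inequality $I_X(V_0\pm s)\le P(F_s^\pm)$ is not justified: the pieces $E_j\subset Y_j$ for $j\ge 1$ live in pmGH limits at infinity, not in $X$, so $F_s^\pm$ is \emph{not} a competitor for the isoperimetric problem on $X$. What is actually needed is $I_X(v)\le I_{Y_j}(v)$ for every limit at infinity $Y_j$; the paper states and uses this explicitly (it follows from the approximation property of perimeter along pmGH-converging sequences, cf.\ \autoref{rem:SemicontPerimeterConverging}). Second, your ``chain rule for second differences'' with $(I_X')^2=H^2$ presumes differentiability of $I_X$ at $V_0$, which you have not established (and uniqueness of barriers is open, \autoref{question:UnicitaBarriera}). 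The paper sidesteps this: with a smooth test function $\varphi$ touching from below, the relation $\varphi(\haus^N(E_t))\le P(E_t)$ with equality at $t=0$ \emph{forces} $\varphi'(V_0)=c$ for any barrier $c$ of any component, simultaneously delivering the common barrier and a well-defined first derivative to feed into the chain rule. Your approach can be repaired---apply $u\mapsto u^{N/(N-1)}$ directly to the two one-sided expansions $I_X(V_0\pm s)\le I_X(V_0)\pm Hs-\tfrac{s^2}{2I_X(V_0)}(\tfrac{H^2}{N-1}+K)+o(s^2)$ and Taylor-expand the right-hand sides---but the phrasing as a chain rule obscures why no differentiability of $I_X$ is required.
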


The previous theorem is sharp in the sense that \eqref{eq:DifferentialInequalitiesProfile} is an equality on simply connected manifolds with constant sectional curvature: spheres, Euclidean and hyperbolic spaces (see \eqref{eq:DefModels} below for a definition).

\medskip

As anticipated in the introduction, the coupling between a Ricci lower bound and an upper bound on the second derivative of the isoperimetric profile is a classical result, and it eventually relies on the second variation formula for the perimeter on Riemannian manifolds. In the smooth context, if $E\subset M$ is a smooth isoperimetric set on a Riemannian manifold $M$, the formula reads
\[
\frac{\d^2}{\d t^2} P(E_t)\bigg|_0 = \int_{\partial E} H^2 -\|{\rm II}\|^2 - \Ric(\nu_E,\nu_E),
\]
where $E_t$ denotes the $t$-tubular neighborhood of $E$\footnote{$E_t\eqdef \{x\in M\st \dist(x,E)<t\}$ for $t>0$, while $E_t\eqdef \{x \in E\st \dist(x,M\setminus E)>-t\}$ for $t\le0$.}, $H$ and ${\rm II}$ are the mean curvature and the second fundamental form of $E$, respectively, and $\Ric(\nu_E,\nu_E)$ denotes Ricci curvature of $M$ applied to the inner unit normal $\nu_E$ of $E$. A lower bound $\Ric\ge K$ on $M$ hence implies the upper bound
\[
\frac{\d^2}{\d t^2} P(E_t)\bigg|_0  \le \left(\frac{N-2}{N-1}H^2 - K\right)P(E),
\]
which, differentiating the composition $I_X\circ \haus^N(E_t)$, readily implies upper bounds on the second derivative of $I_X$. It is crucial to observe that the previous sketch relies on \emph{existence} of isoperimetric sets, as well as on the regularity of the isoperimetric profile and of the boundary of isoperimetric sets.

\medskip

The previous argument has its roots in \cite[Corollaire 6.6]{Gallotast} and \cite[Sect. 7]{BavardPansu86}. The observation of differentiating the $\tfrac{N}{N-1}$-power of the profile comes from \cite{KuwertIsop}. Differential inequalities for the profile have been proved in \cite{SternbergZumbrun} for the relative isoperimetric problem in bounded convex bodies, later generalized in \cite{BayleRosales} to manifolds with boundary and Ricci bounded below; in \cite{Bayle03, Bayle04}, after \cite[Sect. 2.1, Proposition 3.3]{MorganJohnson00}, \eqref{eq:DifferentialInequalitiesProfile} is proved on compact Riemannian manifolds with Ricci bounded below. In \cite{MorganMnfDensity, Bayle03, Milmanconvexity} analogous inequalities hold in the case of compact weighted manifolds. The derivation of \eqref{eq:DifferentialInequalitiesProfile} in the viscosity sense on compact manifolds also appears in \cite{NiWangiso}. To the author's knowledge, the first instances of differential inequalities for the profile of noncompact manifolds without assuming existence of isoperimetric sets are contained in \cite[Theorem 3.3]{MondinoNardulli16}, which however asks strong additional asymptotic conditions on the ambient, in \cite{LeonardiRitore} in the setting of Euclidean convex bodies, and in \cite[Theorem 1.4]{AntBruFogPoz}, which holds for manifolds satisfying just the assumptions of \autoref{thm:DifferentialInequalitiesProfile} but does not recover the sharp inequality \eqref{eq:DifferentialInequalitiesProfile}.

\medskip

In the next sections, we outline the proof of \autoref{thm:DifferentialInequalitiesProfile}. Even in case $X$ in \autoref{thm:DifferentialInequalitiesProfile} is a smooth noncompact manifold, the proof \emph{necessarily} exploits an analysis carried out over isoperimetric sets in $\RCD$ spaces possibly different from $X$, as existence of minimizers on $X$ is not guaranteed. In fact, the proof follows the following steps.
\begin{enumerate}
	\item The asymptotic mass decomposition result of \autoref{thm:AsymptoticMassDecomposition} identifies isoperimetric sets in limits at infinity along $X$, whose perimeter and measure are still related to the isoperimetric problem on the orginal space $X$.
	
	\item Recalling the regularity result in \autoref{thm:Regularity}, it is possible to codify a notion of ``constant mean curvature'' for the boundary of an isoperimetric set through Laplacian bounds satisfied by the distance function from the boundary of such set. This is the content of \autoref{thm:MeanCurvatureBarriers}.
	
	\item The previous Laplacian bounds imply Heintze--Karcher type estimates on volume and perimeter of tubular neighborhoods of the isoperimetric sets obtained in (1). These bounds allow to sharply estimate the upper second derivative of the composition of $I_X$ with the volume of tubular neighborhoods of the isoperimetric sets from (1), allowing to deduce \eqref{eq:DifferentialInequalitiesProfile} in the viscosity sense.
\end{enumerate}

\medskip

We conclude by mentioning that the validity of \autoref{thm:DifferentialInequalitiesProfile} is open in the collapsed case.

\begin{question}
    Let $N \in \N$ with $N\ge 2$, and let $K\in\R$. Let $(X,\dist,\meas)$ be a collapsed $\RCD(K,N)$ space. Under which hypotheses does \eqref{eq:DifferentialInequalitiesProfile} hold?
\end{question}

\subsection{Asymptotic mass decomposition}

The next result describes the general behavior of a minimizing sequence for the isoperimetric problem on a noncompact $\RCD(K,N)$ space $(X,\dist,\mathcal{H}^N)$. The mass of the sequence splits in finitely many pieces, each of them converging to isoperimetric sets in limits at infinity of $X$, except for at most one piece which converge to an isoperimetric set on $X$. The last set may have measure strictly lower than the one of the starting minimizing sequence, and possibly equal to zero.

\begin{theorem}[{Asymptotic mass decomposition, \cite[Theorem 1.1]{AntonelliNardulliPozzetta}}]\label{thm:AsymptoticMassDecomposition}
Let $N \in \N$ with $N\ge 2$, and let $K\in\R$. Let $(X,\dist,\mathcal{H}^N)$ be a noncompact $\RCD(K,N)$ space. Assume there exists $v_0>0$ such that $\mathcal{H}^N(B_1(x))\geq v_0$ for every $x\in X$. Let $V>0$. For every minimizing $($for the perimeter$)$ sequence of bounded sets $\Omega_i\subset X$ of volume $V$, up to passing to a subsequence, there exist a nondecreasing bounded sequence $\{N_i\}_{i\in\mathbb N}\subseteq \mathbb N$, disjoint sets of finite perimeter $\Omega_i^c, \Omega_{i,j}^d \subset \Omega_i$, and points $p_{i,j}$, with $1\leq j\leq N_i$ for any $i$, such that the following claims hold
\begin{itemize}
    \item $\lim_{i} \dist(p_{i,j},p_{i,\ell}) = \lim_{i} \dist(p_{i,j},o)=+\infty$, for any $j\neq \ell \le\overline N$ and any $o\in X$, where $\overline N:=\lim_i N_i <+\infty$;
    
    \item $\Omega_i^c$ converges to $\Omega\subset X$ in $L^1(X)$ and $ P( \Omega_i^c) \to_i P(\Omega)$. Moreover $\Omega$ is an isoperimetric region in $X$;
    
    \item for every $0<j\le\overline N$, $(X,\dist,\mathcal{H}^N,p_{i,j})$ converges in the pmGH sense  to an $\RCD(K,N)$ space $(X_j,\dist_j,\mathcal{H}^N,p_j)$. Moreover there exist isoperimetric regions $Z_j \subset X_j$ such that $\Omega^d_{i,j}\to_i Z_j$ in $L^1$-strong and $P(\Omega^d_{i,j}) \to_i P(Z_j)$;
    
    \item it holds that
    \begin{equation}\label{eq:UguaglianzeIntro}
    I_X(V) = P(\Omega) + \sum_{j=1}^{\overline{N}} P (Z_j),
    \qquad\qquad
    V=\mathcal{H}^N(\Omega) +  \sum_{j=1}^{\overline{N}} \mathcal{H}^N(Z_j).
    \end{equation}
\end{itemize}
\end{theorem}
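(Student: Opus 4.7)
The plan is to run a concentration-compactness iteration in the pmGH framework, repeatedly peeling off mass that escapes to infinity and realizing each escaping piece as an isoperimetric region inside a pmGH-limit at infinity of $X$. The essential tools are the pmGH precompactness of $\RCD(K,N)$ spaces under noncollapsing (\autoref{thm:Precompactness}) and the associated volume convergence (\autoref{thm:VolumeConvergence}); the $L^1_{\mathrm{loc}}$-precompactness and lower semicontinuity of perimeter along pmGH-converging sequences (\autoref{rem:SemicontPerimeterConverging}); and a uniform relative isoperimetric inequality on unit balls, which follows from noncollapsing and Bishop--Gromov and rules out mass vanishing for a minimizing sequence.

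Fix $o\in X$. Since $P(\Omega_i)\to I_X(V)<+\infty$, \autoref{rem:SemicontPerimeterConverging} yields $\Omega_i\to\Omega$ in $L^1_{\mathrm{loc}}$ along a subsequence for some $\Omega\subset X$, and a coarea slicing in $\dist(\,\cdot\,,o)$ lets us choose radii $R_i\to+\infty$ so that $\Omega_i^c\eqdef\Omega_i\cap B_{R_i}(o)$ converges to $\Omega$ in $L^1$-strong with $\liminf_i P(\Omega_i^c)\ge P(\Omega)$. If $V-\haus^N(\Omega)=0$ we stop with $\overline N=0$, so assume otherwise. Consider the concentration function $Q_i(R)\eqdef\sup_{x\in X}\haus^N\bigl((\Omega_i\setminus\Omega_i^c)\cap B_R(x)\bigr)$ and its limit $\alpha_1\eqdef\lim_{R\to+\infty}\liminf_i Q_i(R)$. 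The vanishing case $\alpha_1=0$ is excluded because covering $X$ by unit balls with uniformly bounded overlap and applying the relative isoperimetric inequality in each would force $P(\Omega_i)\to+\infty$, against minimality. So pick $p_{i,1}\in X$ with $\haus^N\bigl((\Omega_i\setminus\Omega_i^c)\cap B_{r_0}(p_{i,1})\bigr)\ge\alpha_1/2$ for some fixed $r_0$; necessarily $\dist(p_{i,1},o)\to+\infty$, since $\Omega_i^c$ absorbs all mass staying bounded near $o$. By \autoref{thm:Precompactness}, $(X,\dist,\haus^N,p_{i,1})$ pmGH-converges along a subsequence to an $\RCD(K,N)$ space $(X_1,\dist_1,\haus^N,p_1)$, and a further slicing in $\dist(\,\cdot\,,p_{i,1})$ produces cutoff radii $r_{i,1}\to+\infty$ such that $\Omega_{i,1}^d\eqdef(\Omega_i\setminus\Omega_i^c)\cap B_{r_{i,1}}(p_{i,1})$ converges in $L^1$-strong to some $Z_1\subset X_1$ with $\liminf_i P(\Omega_{i,1}^d)\ge P(Z_1)$.

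Apply the same procedure to the remaining escaping mass $\Omega_i\setminus(\Omega_i^c\cup\bigcup_{j\le k}\Omega_{i,j}^d)$: either its concentration limit $\alpha_{k+1}$ is zero, in which case the anti-vanishing argument combined with the bound on the residual perimeter forces the residual volume itself to converge to $0$ and we stop; or we extract a new piece at $p_{i,k+1}$ satisfying $\dist(p_{i,k+1},p_{i,j})\to+\infty$ for every $j\le k$ by disjointness from the previously extracted balls of diverging radii. Finiteness $\overline N<+\infty$ follows because $\sum_j P(Z_j)\le\liminf_i P(\Omega_i)=I_X(V)<+\infty$ and each $P(Z_j)$ is bounded below whenever $\alpha_j$ is, which in turn is bounded below as long as the residual mass is bounded below, again by the anti-vanishing estimate.

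The volume identity in \eqref{eq:UguaglianzeIntro} is immediate from the vanishing residual mass. Lower semicontinuity of perimeter applied on each piece yields $I_X(V)=\lim_i P(\Omega_i)\ge P(\Omega)+\sum_j P(Z_j)$; the reverse inequality, and simultaneously the fact that $\Omega$ and each $Z_j$ are genuine isoperimetric regions, comes from a transplantation argument based on the second bullet of \autoref{rem:SemicontPerimeterConverging}: any bounded set of finite perimeter in $X_j$ can be approximated by uniformly bounded sets $W_{i,j}\subset X$ (placed near $p_{i,j}$) with $P(W_{i,j})\to P(Z_j)$, and placing them together with $\Omega$ in disjoint far-apart regions of $X$ and correcting the total volume by a small perturbation produces a competitor for $I_X(V)$ of perimeter arbitrarily close to $P(\Omega)+\sum_j P(Z_j)$, ruling out at the same time any strict improvement of any single piece. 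The main obstacle I foresee is the simultaneous control of the cutoff radii $R_i,r_{i,1},r_{i,2},\ldots$ by a diagonal slicing argument so that all cutoff sets converge in $L^1$-strong without losing perimeter through the cutoff spheres, together with the transplantation step, where the lack of a natural group action on $X$ forces one to rely on the pmGH convergence itself for the approximation.
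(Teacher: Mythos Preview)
Your overall strategy---concentration--compactness iteration combined with pmGH precompactness (\autoref{thm:Precompactness}), volume convergence (\autoref{thm:VolumeConvergence}), the $L^1_{\mathrm{loc}}$-compactness and lower semicontinuity of perimeter (\autoref{rem:SemicontPerimeterConverging}), and the transplantation argument for the reverse inequality---is exactly the approach the paper describes, and the technical obstacles you flag (diagonal control of cutoff radii, transplantation without a group action) are the right ones.

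The genuine gap is in your argument for $\overline N<+\infty$. Your reasoning is: $\sum_j P(Z_j)\le I_X(V)$ and each $P(Z_j)$ is bounded below because $\alpha_j$ is, which in turn is bounded below while the residual mass is. But this chain only excludes the scenario in which the residual mass $m_k$ stays bounded away from zero; in that case one gets a uniform lower bound on $\haus^N(Z_j)$ (not just on $P(Z_j)$), contradicting $\sum_j\haus^N(Z_j)\le V$. It does \emph{not} rule out the possibility that $m_k\to 0$ through infinitely many steps with $\haus^N(Z_j)\to 0$: for small volumes the local isoperimetric inequality only gives $P(Z_j)\gtrsim\haus^N(Z_j)^{(N-1)/N}\to 0$, so $\sum_j P(Z_j)<+\infty$ yields no contradiction. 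The paper explicitly records that the finiteness $\overline N<+\infty$ is the contribution of \cite{AntonelliNardulliPozzetta} and that it \emph{relies on the topological regularity \autoref{thm:Regularity}} (boundedness of isoperimetric sets). Roughly, once each $Z_j$ is known to be isoperimetric and bounded, one can transplant small pieces next to one another inside a single limit space and exploit a strict-subadditivity/merging argument to strictly lower the total perimeter, contradicting minimality; without boundedness this merging is not available. You should invoke \autoref{thm:Regularity} at this step rather than the anti-vanishing estimate alone.
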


By a standard truncation argument \cite[Lemma 2.17]{AFP21}, in the setting of \autoref{thm:AsymptoticMassDecomposition} it is always possible to choose a minimizing sequence for the isoperimetric problem made of bounded sets, thus such assumption on the sequence $\Omega_i$ is not restrictive.\\
The proof of \autoref{thm:AsymptoticMassDecomposition} follows by combining a concentration-compactness argument, see \cite[Lemma I.1]{Lions84I}, with the natural precompactness of $\RCD$ spaces, see \autoref{thm:Precompactness}, and of sequences of sets of finite perimeter, see \autoref{rem:SemicontPerimeterConverging}.
\autoref{thm:AsymptoticMassDecomposition} is inspired by the theory developed on Riemannian manifolds in \cite{RitRosales04, Nar14, MondinoNardulli16, FloresNardulli20, FloresNardulliCompactness}, where additional strong assumptions on the geometry at infinity of the space are assumed. Such assumptions where removed in \cite{AFP21}. Finally \cite{AntonelliNardulliPozzetta} contains the generalization to the $\RCD$ setting and the proof of the fact that $\overline{N}<+\infty$ in \autoref{thm:AsymptoticMassDecomposition}, which relies on the regularity \autoref{thm:Regularity}. Other recent analogous applications of the method to isoperimetric clusters are \cite{Reinaldo2020, NovagaClusters}. The coupling between the concentration-compactness principle and the precompactness of $\RCD$ spaces has been recently exploited also in \cite{NobiliVioloStability} to prove stability results for Sobolev inequalities.

\begin{remark}[Nontriviality of the problem]\label{rem:InfPositivo}
	Let $N \in \N$ with $N\ge 2$, let $K\in\R$, and $v_0,\overline{V}>0$. Then there exists $\mathscr{I}>0$ such that if $(X,\dist,\haus^N)$ is a noncompact $\RCD(K,N)$ space with $\inf_{x \in X}\haus^N(B_1(x))\ge v_0$, then $I_X(\overline{V}) \ge \mathscr{I}$.\\
	The claim easily follows by a contradiction argument applying \autoref{thm:AsymptoticMassDecomposition} together with the generalized compactness result from \cite[Theorem 1.2]{AntonelliNardulliPozzetta}. Alternatively, the observation follows adapting arguments from \cite[Theorem V.2.6]{ChavelIsoperimetricBook01}.\\
    We also remark that both a lower bound on the Ricci curvature and the noncollapsing condition given by $\inf_{x \in X} \haus^N(B_1(x))>0$ are necessary for the isoperimetric problem to make sense, i.e., for the isoperimetric profile to be strictly positive for positive volumes, see \cite[Sect. 4.3]{AFP21} and \cite[Proposition 2.18]{AntBruFogPoz} and the related \cite[Proposition 3.14]{LeonardiRitore} in the setting of convex bodies.
\end{remark}

\subsection{Mean curvature barriers}

We need to introduce some notation.

\begin{definition}\label{def:SignedDistance}
For any open set $E$ in a metric space $(X,\dist)$, the \emph{signed distance from $E$} is defined by
\[
\dist^s_E(p) \eqdef
\begin{cases}
\dist_E(p) & \text{ if } p \in X \setminus E,\\
-\dist_{X\setminus E}(p) & \text{ if } p \in E.
\end{cases}
\]
\end{definition}

Let $k,\lambda \in\R$. We define the function
\begin{equation*}
s_{k,\lambda}(r) \eqdef \cos_k(r) - \lambda \sin_k(r),
\end{equation*}
where $\cos_k$ and $\sin_k$ are the solution to the problems
\begin{equation}\label{eq:CosSin}
    \begin{cases}
    \cos_k''(r) + k \cos_k(r)=0,\\
    \cos_k(0)=1,\\
    \cos_k'(0)=0,
    \end{cases}
    \qquad\qquad
    \begin{cases}
    \sin_k''(r) + k \sin_k(r)=0,\\
    \sin_k(0)=0,\\
    \sin_k'(0)=1.
    \end{cases}
\end{equation}
For given $N\in \N$ with $N\ge 2$, the Riemannian manifold
\begin{equation}\label{eq:DefModels}
\mathbb{M}_k^N\eqdef \begin{cases}
([0,+\infty)\times \S^{N-1}, \d r^2 + \sin_k^2(r) g_{\S^{N-1}}) & \text{ if } k \le 0,\\
([0,\pi/\sqrt{k}]\times \S^{N-1}, \d r^2 + \sin_k^2(r) g_{\S^{N-1}}) & \text{ if } k>0,
\end{cases}
\end{equation}
is the (unique up to isometry) $N$-dimensional simply connected Riemannian manifold of constant sectional curvature $k$ (see \cite{Petersen2016}), where $g_{\S^{N-1}}$ is the standard metric on $\S^{N-1}$.
It is an exercise to check that the map
\begin{equation}\label{eq:zz}
r \mapsto (N-1) \frac{s'_{k,-\lambda}(r)}{s_{k,-\lambda}(r)}
\end{equation}
yields the mean curvature (with respect to inner normal) of the sphere defined by the points having signed distance $r$ from a ball whose boundary has mean curvature equal to $(N-1)\lambda$ in $\mathbb{M}^N_k$, for any $k,\lambda,r \in \R$ for which \eqref{eq:zz} makes sense.

\medskip

Before stating the next main result, we need to introduce the following generalized notion of Laplacian, that we state specialized to the noncollapsed $\RCD$ setting.

\begin{definition}[Measure Laplacian]
    Let $N \in \N$ with $N\ge 2$, and let $K\in\R$. Let $(X,\dist,\mathcal{H}^N)$ be an $\RCD(K,N)$ space. Let $\Omega\subset X$ be open. Let $F:X\to \R$ be a locally Lipschitz function. We say that $F$ has \emph{measure Laplacian on $\Omega$} if there exists a Radon measure $\mu$ on $\Omega$ such that for any Lipschitz function $f$ with compact support on $\Omega$, there holds
    \[
    \int \nabla F \cdot \nabla f \de \haus^N = - \int f \de \mu.
    \]
    In such a case we write $\boldsymbol{\Delta}F = \mu$. We say that $\boldsymbol{\Delta}F \ge G$ on $\Omega$, for some $G \in L^1_{\rm loc}(\Omega)$, if $F$ has measure Laplacian on $\Omega$ and
    \[
    -\int \nabla F \cdot \nabla f \de \haus^N \ge \int f \,G\de \haus^N,
    \]
    for any nonnegative Lipschitz function $f$ with compact support on $\Omega$.
\end{definition}

\begin{theorem}[{Existence of mean curvature barriers \cite[Theorem 1.3]{APPSa}}] \label{thm:MeanCurvatureBarriers}
Let $N \in \N$ with $N\ge 2$, and let $K\in\R$. Let $(X,\dist,\mathcal{H}^N)$ be an $\RCD(K,N)$ space. Let $E\subset X$ be an isoperimetric region.
Then, denoting by $f$ the signed distance function from $E$, there exists $c\in\setR$ such that
\begin{equation}\label{eq:LaplacianBounds}
\boldsymbol{\Delta} f\ge  -(N-1)\frac{s'_{\frac{K}{N-1},\frac{c}{N-1}}\circ \left(-f\right)}{s_{\frac{K}{N-1},\frac{c}{N-1}}\circ \left(-f\right)}      \quad\text{on $E$, }\qquad
\boldsymbol{\Delta} f\le (N-1)\frac{s'_{\frac{K}{N-1},-\frac{c}{N-1}}\circ f}{s_{\frac{K}{N-1},-\frac{c}{N-1}}\circ f}    \quad\text{on $X\setminus \overline{E}$}\, .
\end{equation}
If $K=0$, then $c\ge0$ and \eqref{eq:LaplacianBounds} reads
\begin{equation}\label{eq:LaplacianBounds0}
 \boldsymbol{\Delta} f\ge  \frac{c}{1+\frac{c}{N-1}f}  \quad\text{on $E$, }
 \qquad
 \boldsymbol{\Delta} f\le \frac{c}{1+\frac{c}{N-1}f} \quad\text{on $X\setminus \overline{E}$}.
\end{equation}
\end{theorem}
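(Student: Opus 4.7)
The proof naturally splits into two parts. First, extract a scalar $c\in\R$ which plays the role of a weak \emph{constant mean curvature} for $\partial E$, from the variational minimality of $E$. Second, propagate this boundary datum into $E$ and $X\setminus\overline E$ via a Riccati--Bochner comparison along geodesics, yielding the stated Laplacian bounds on the signed distance $f$.

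For the first step, I would exploit the deformation property for sets of finite perimeter (see \cite[Theorem 1.1, Theorem 2.35]{AntonelliPasqualettoPozzetta21}): around any point $p\in\partial E=\partial E^{(1)}$ (open and bounded by \autoref{thm:Regularity}) and for every small $\eps>0$, one builds competitors $E_\eps^{p,\pm}$ with $\haus^N(E_\eps^{p,\pm})=\haus^N(E)\pm\eps$ and $P(E_\eps^{p,\pm})\le P(E)\pm \kappa^\pm(p)\,\eps+o(\eps)$, for local constants $\kappa^\pm(p)\in\R$. Combining two such constructions at two distinct points of $\partial E$ with opposite volume variations produces a volume-preserving competitor, and the minimality of $E$ forces $\kappa^+(p)\ge\kappa^-(q)$ for all $p,q\in\partial E$; symmetrising in $p,q$ pins down a single scalar $c$, independent of the base point, which plays the role of the generalized constant mean curvature of $\partial E$.

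For the second step, I would implement the classical Riccati propagation in the $\RCD$ category. Formally, the Bochner inequality from \autoref{def:RCD}, applied to $f$ with $|\nabla f|=1$ on its regular set and the trace Cauchy--Schwarz $(\Delta f)^2\le N|\mathrm{Hess}\,f|^2$, yields along a geodesic ray transverse to the level sets of $f$ the Riccati-type inequality
\[
u'+\tfrac{u^2}{N-1}+K\le 0,\qquad u(0^+)\le c,
\]
where $u(r)$ represents the Laplacian of $f$ at signed distance $r$. The solution of the corresponding equality ODE with initial datum $c$ is precisely $U_c(r)=(N-1)\,s'_{K/(N-1),-c/(N-1)}(r)/s_{K/(N-1),-c/(N-1)}(r)$, and the ODE comparison technology collected in \autoref{sec:Appendix} then delivers $\boldsymbol\Delta f\le U_c\circ f$ on $X\setminus\overline E$. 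Applying the mirror argument to $-f$ on $E$, with the orientation-reversed initial datum $-c$ and then returning to $f$ via $\boldsymbol\Delta f=-\boldsymbol\Delta(-f)$, produces the claimed lower bound on $E$ with $s_{K/(N-1),c/(N-1)}\circ(-f)$.

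The main obstacle is making the Riccati comparison rigorous in the nonsmooth setting, where $f$ is merely Lipschitz and $\boldsymbol\Delta f$ is only a measure. I would use the $1$D localization of Cavalletti--Mondino: the $\CD(K,N)$ condition disintegrates $\meas$ along the transport geodesics issuing perpendicularly from $\partial E$ into one-dimensional marginals whose densities satisfy a $\CD(K,N)$ ODE, reducing the Laplacian comparison to a family of one-dimensional ODE comparisons with initial data pinned down by step one. Alternatively, one can work directly in the measure-Laplacian formulation by testing against radial cut-offs built from $s_{K/(N-1),\pm c/(N-1)}(\pm f)$ and integrating by parts using the integral Bochner inequality of \autoref{def:RCD}. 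Finally, the sign constraint $c\ge 0$ when $K=0$ is forced by the global validity of the comparison: the function $s_{0,-c/(N-1)}(r)=1+\tfrac{c}{N-1}r$ must remain positive on the range of $f$ restricted to $X\setminus\overline E$; since $E$ is bounded by \autoref{thm:Regularity} and $f$ is unbounded on the complement in the nontrivial noncompact regime, $c<0$ would make the upper bound blow up at $r=(N-1)/|c|$, contradicting the global validity of the Laplacian estimate.
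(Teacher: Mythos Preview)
Your two--step plan does not match the paper's argument, and the second step contains a genuine gap. The paper (citing \cite{APPSa}) does \emph{not} first extract $c$ and then propagate a Riccati inequality from $\partial E$; it proves \eqref{eq:LaplacianBounds} directly by contradiction. Assuming the viscosity version of, say, the upper bound on $X\setminus\overline E$ fails at a point, one obtains a smooth test function touching $f$ from below with too large Laplacian; via Hopf--Lax duality this test function is turned into a distance-like function with well-controlled Laplacian, and \emph{sliding its sublevel sets} produces a one-parameter family of competitors. Coupling this deformation with a compensating one on the other side of $\partial E$ gives a volume-preserving variation that strictly decreases perimeter, contradicting isoperimetry. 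The equivalence between viscosity and distributional Laplacian bounds from \cite{MoS21} then yields the statement. The constant $c$ emerges as the threshold separating the values for which the outer and inner contradiction arguments apply, not from a preliminary first-variation computation.

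The gap in your approach is the passage from step~1 to step~2. Your step~1 produces a single \emph{global} scalar $c$ encoding the first variation of perimeter under volume change; but your step~2 needs, along each transport ray (or geodesic normal to $\partial E$), a \emph{pointwise} initial condition $u(0^+)\le c$ for the Riccati ODE. In the smooth case this comes from the Euler--Lagrange identity ``mean curvature of $\partial E$ equals $c$ pointwise'', which is exactly what is unavailable here: there is no a~priori mean curvature function on $\partial E$ in the $\RCD$ setting to feed into the needle decomposition. The paper explicitly flags that the classical ``evolution of $\Delta\dist$ along geodesics'' route \emph{seems out of reach in the nonsmooth setting}; the Hopf--Lax/sliding argument is designed precisely to bypass this missing initial datum. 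Your localization alternative inherits the same problem: the $\CD(K,N)$ inequality on the needle densities controls their concavity, but the slope at the foot of each ray is not pinned down by the global deformation constant from step~1.

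A minor additional issue: your argument for $c\ge 0$ when $K=0$ assumes $X$ is noncompact and $f$ is unbounded on $X\setminus\overline E$; the theorem has no such hypothesis.
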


In the setting of \autoref{thm:MeanCurvatureBarriers}, any constant $c$ satisfying \eqref{eq:LaplacianBounds} is said to be a \emph{mean curvature barrier} for $E$. In case $X$ in \autoref{thm:MeanCurvatureBarriers} is a Riemannian manifold, then $c$ is the constant mean curvature of the (regular part of the) boundary of $E$; moreover \eqref{eq:LaplacianBounds} is a well-known consequence of the evolution of the Laplacian of the distance function along geodesics from the boundary, together with the classical regularity theory for boundaries of isoperimetric sets on smooth manifolds. Such classical argument seems out of reach in the nonsmooth setting. Instead, \autoref{thm:MeanCurvatureBarriers} follows exploiting the minimality of the set $E$ in \autoref{thm:MeanCurvatureBarriers} and the equivalence between distributional and viscosity bounds on
the Laplacian from \cite{MoS21}, recently generalized to the collapsed case in \cite{GigliMondinoSemolaLaplacianBounds}. In \cite{APPSa}, \eqref{eq:LaplacianBounds} is proved by contradiction, showing that if the bounds fail there exists a volume fixing perturbation of the set with strictly smaller perimeter, a contradiction with the isoperimetric
condition. The perturbations are built by sliding simultaneously level sets of distance-like functions with well controlled Laplacian, obtained by Hopf--Lax duality on test functions given by the absurd assumption that \eqref{eq:LaplacianBounds} does not hold in the viscosity sense. The method was firstly employed in \cite{MoS21} to show Laplacian bounds for the signed distance function from perimeter minimizers, and it was inspired by \cite{CaffarelliCordoba93, Petruninharmonic}.

\medskip

We mention that an analogous notion of constant mean curvature has been independently considered in \cite{Ketterer21}, while a different notion of mean curvature is introduced in \cite{Ketterer20}. See also \cite{LahtiShanmSpeight} for a further notion of domain with boundary of positive mean curvature in metric measure spaces.
We remark that \autoref{thm:MeanCurvatureBarriers} only shows existence of some mean curvature barrier, leaving the following question open.


\begin{question}\label{question:UnicitaBarriera}
In the assumptions of \autoref{thm:MeanCurvatureBarriers}, if $E\subset X$ is isoperimetric, does there exist a unique number $c\in\R$ such that \eqref{eq:LaplacianBounds} holds?
\end{question}

A positive answer to \autoref{question:UnicitaBarriera} would allow to speak of constant mean curvature for isoperimetric sets. The answer to \autoref{question:UnicitaBarriera} is negative if $E$ is not an isoperimetric set, see \cite[Remark 3.9]{APPSa}. Clearly, the latter nonuniqueness phenomenon is a feature of the nonsmooth framework.

\medskip

Integrating \eqref{eq:LaplacianBounds} over tubular neighborhoods of isoperimetric sets and exploiting the Gauss--Green formula \cite[Theorem 1.6]{BPSGaussGreen}, one readily finds Heintze--Karcher type inequalities on perimeter and volume of such tubular neighborhoods \cite{HeintzeKarcher}. We state such a consequence in the case $K=0$ only.

\begin{corollary}\label{cor:HK}
	Let $N \in \N$ with $N\ge 2$. Let $(X,\dist,\mathcal{H}^N)$ be an $\RCD(0,N)$ space. Let $E\subset X$ be an isoperimetric region, let $f$ be the signed distance function from $E$, and let $c\in[0,+\infty)$ be a mean curvature barrier for $E$. Denoting $E_t \eqdef \{ x \st f(x) <t \}$, for $t \in \R$, there holds
	\begin{equation*}
		P(E_t) \le P(E) \left(1 + \frac{c}{N-1}t \right)^{N-1},
		\qquad
		|\haus^N(E_t) - \haus^N(E)| \le  P(E) \int_0^{|t|} \left(1 + {\rm sgn}(t)\,\frac{c}{N-1}s \right)^{N-1}_+ \de s,
	\end{equation*}
    where $(\cdot)_+$ denotes positive part.
\end{corollary}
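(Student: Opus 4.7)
The plan is to mimic the classical Heintze--Karcher argument in the $\RCD$ setting, with the measure Laplacian bounds \eqref{eq:LaplacianBounds0} playing the role of pointwise mean curvature bounds on the parallel surfaces of $f$, and then close the loop via Grönwall comparison with the explicit ODE $y'=\tfrac{c}{1+\tfrac{c}{N-1}t}y$, whose solution is precisely the target profile $(1+\tfrac{c}{N-1}t)^{N-1}$.

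For $t\ge 0$ I would proceed as follows. Since $f$ is $1$-Lipschitz with $|\nabla f|=1$ $\haus^N$-a.e., coarea yields both $\haus^{N-1}(\{f=s\})=P(E_s)$ and $\tfrac{\d}{\d s}\haus^N(E_s)=P(E_s)$ for a.e.\ $s$. Applying the Gauss--Green formula of \cite[Theorem 1.6]{BPSGaussGreen} to the vector field $\nabla f$ on the shell $E_t\setminus\overline{E}$, and using that $\nabla f$ coincides with the outer unit normal on the reduced boundary of every sublevel set of $f$, gives
\[
P(E_t)-P(E)=\int_{E_t\setminus\overline{E}}\boldsymbol{\Delta}f\,\d\haus^N\le \int_{E_t\setminus\overline{E}}\frac{c}{1+\tfrac{c}{N-1}f}\,\d\haus^N=\int_0^t\frac{c}{1+\tfrac{c}{N-1}s}P(E_s)\,\d s,
\]
where the inequality uses the second estimate in \eqref{eq:LaplacianBounds0} and the last equality uses coarea. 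A direct Grönwall inequality then produces $P(E_t)\le P(E)(1+\tfrac{c}{N-1}t)^{N-1}$. For $t<0$ I would run the symmetric computation on $E\setminus\overline{E_t}$ invoking the first estimate in \eqref{eq:LaplacianBounds0}; after differentiating the resulting integral inequality and integrating the associated ODE, one recovers $P(E_t)\le P(E)(1+\tfrac{c}{N-1}t)^{N-1}$ as long as the right-hand side is nonnegative, while beyond the focal time $t=-(N-1)/c$ the set $E_t$ is empty and the bound is trivial. The volume estimate then follows by integrating the perimeter bound via $v'(s)=P(E_s)$ and combining the $t>0$ and $t<0$ contributions through the $\sign(t)$ factor.

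The main technical obstacle I anticipate is in the Gauss--Green step: justifying that the nonsmooth divergence theorem applied to $\nabla f$ on $E_t\setminus\overline{E}$ really produces the clean boundary contribution $P(E_t)-P(E)$. This requires identifying $\nabla f$ with the outer unit normal $\nu_{E_s}$ $\haus^{N-1}$-a.e.\ on $\mathcal F E_s$ for a.e.\ level $s$, and carefully handling the fact that $\boldsymbol{\Delta}f$ is a priori only a Radon measure on the open sets $X\setminus\overline{E}$ and $E$ separately, with one-sided bounds there (no control on any singular part along $\partial E$). Once these structural properties of $f$ and of the parallel sets are in hand, the remainder of the argument reduces to standard ODE comparison with the explicit model solution.
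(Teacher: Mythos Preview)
Your proposal is correct and follows exactly the approach indicated in the paper, which only gives a one-line sketch (``Integrating \eqref{eq:LaplacianBounds} over tubular neighborhoods of isoperimetric sets and exploiting the Gauss--Green formula \cite[Theorem~1.6]{BPSGaussGreen}''); you have simply fleshed out the details of that sketch via coarea and an ODE/Gr\"onwall comparison with the model $(1+\tfrac{c}{N-1}t)^{N-1}$, and you correctly flag the genuine technical points (identifying $\nabla f$ with the unit normal on level sets, and handling the measure Laplacian up to $\partial E$).
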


\autoref{cor:HK} immediately implies the next observation.

\begin{remark}\label{rem:AVRcPositiva}
    Let $N \in \N$ with $N\ge 2$. Let $(X,\dist,\mathcal{H}^N)$ be an $\RCD(0,N)$ space and fix $o \in X$.  Let $E\subset X$ be an isoperimetric region. If $\liminf_{r\to+\infty}\haus^N(B_r(o))/r =+\infty$, then any mean curvature barrier $c$ for $E$ is strictly positive.\\
    In fact, if an isoperimetric set has mean curvature barrier equal to zero, it is possible to prove that the complement of $E$ is made of cylindrical ends, see \cite[Theorem 4.11]{Ketterer21}, \cite{KettererKitabeppuLakzian}, and \cite[Theorem 2.18]{AntonelliPozzettaAlexandrov} for a precise statement.
\end{remark}

\subsection{Proof of \autoref{thm:DifferentialInequalitiesProfile} and some consequences}

\begin{proof}[Proof of \autoref{thm:DifferentialInequalitiesProfile}]
We give a proof in the case $K=0$ only and assuming that $X$ is noncompact. We shall prove that \eqref{eq:DifferentialInequalitiesProfile} holds in the viscosity sense, equivalence with other formulations follow from \autoref{prop:EquivalenceDifferentialInequalities}. Let $V \in (0,\haus^N(X))$ and let $\varphi$ be a smooth function defined in a neighborhood of $V$ such that $\varphi\le I_X$ and $\varphi(V)= I_X(V)$. Let us apply the asymptotic mass decomposition \autoref{thm:AsymptoticMassDecomposition} at volume $V$. Let us assume for simplicity that \autoref{thm:AsymptoticMassDecomposition} yields exactly one isoperimetric set $E\subset Y$ with $P(E)=I_X(\haus^N(E))$ of measure $V$ contained in an $\RCD(0,N)$ space $(Y,\dist_Y,\haus^N)$ which either coincides with $X$ or is a pmGH limit at infinity of $X$; the general case follows by minor adaptations. By \autoref{thm:MeanCurvatureBarriers}, $E$ has a mean curvature barrier $c$. For $E_t$ as in \autoref{cor:HK}, we estimate
\begin{equation}\label{eq:zwz}
    \varphi(\haus^N(E_t)) \le I_X(\haus^N(E_t)) \le I_Y(\haus^N(E_t)) \le P(E_t) \le  P(E) \left(1 + \frac{c}{N-1}t \right)^{N-1},
\end{equation}
for $t$ close to zero, where in the second inequality we used the fact that the profile of any limit at infinity of $X$ is no less than the isoperimetric profile on $X$, which follows by an immediate approximation argument (see \cite[Proposition 2.19]{AntonelliNardulliPozzetta}, \cite[Proposition 3.2]{AFP21}, and recall \autoref{rem:SemicontPerimeterConverging}). Since equality holds in \eqref{eq:zwz} for $t=0$, then there exists the derivative $P(E_t)'|_{t=0} = c\,P(E)$ and we get that $\varphi'(V) = c$. Hence there exists the second derivative $\haus^N(E_t)''|_{t=0}= c \, P(E) = c \, I_X(V)$, and thus
\begin{equation}\label{eq:zz1}
    (\varphi(\haus^N(E_t)))''|_{t=0} = \varphi''(V) I_X^2(V) + (\varphi'(V))^2 \, I_X(V).
\end{equation}
On the other hand by \autoref{cor:HK} we estimate
\begin{equation}\label{eq:zz2}
\begin{split}
    (\varphi(\haus^N(E_t)))''|_{t=0} &= \lim_{t\to0^+} \frac{\varphi(\haus^N(E_t)) + \varphi(\haus^N(E_{-t})) - 2 \varphi(V)}{t^2} \le \limsup_{t\to0^+} \frac{P(E_t) + P(E_{-t}) - 2 P(E)}{t^2}\\
    &\le P(E) \frac{\d^2}{\d^2t} \left(1 + \frac{c}{N-1}t \right)^{N-1} \bigg|_{t=0} = \frac{N-2}{N-1} (\varphi'(V))^2 \, I_X(V).
\end{split}
\end{equation}
Putting together \eqref{eq:zz1} and \eqref{eq:zz2} we find $\varphi''(V) I_X(V) \le - (\varphi'(V))^2 /(N-1)$. This shows that $I_X$ solves $I_X''\,I_X \le  - (I_X')^2 /(N-1)$ in the viscosity sense on $(0,\haus^N(X))$. Recalling \autoref{rem:InfPositivo}, this is readily checked to be equivalent to the desired \eqref{eq:DifferentialInequalitiesProfile}.
\end{proof}

We collect a couple of useful observations following from \autoref{thm:DifferentialInequalitiesProfile} that we shall use in the following.

\begin{remark}\label{rem:DerivataProfiloBarriera}
    Let $N \in \N$ with $N\ge 2$, and let $K\in\R$. Let $(X,\dist,\mathcal{H}^N)$ be a noncompact $\RCD(K,N)$ space. Assume there exists $v_0>0$ such that $\mathcal{H}^N(B_1(x))\geq v_0$ for every $x\in X$. Let $\overline{V}>0$ and let $\Omega, Z_1,\ldots, Z_{\overline{N}}$ be isoperimetric sets obtained by applying \autoref{thm:AsymptoticMassDecomposition} at the volume $\overline{V}$.
    If $I_X$ is differentiable at $\overline{V}$, then any of the sets $\Omega, Z_1,\ldots, Z_{\overline{N}}$ has a unique mean curvature barrier equal to $I_X'(\overline{V})$.\\
    The previous claim easily follows differentiating the perimeter of tubular neighborhoods as in the proof of \autoref{thm:DifferentialInequalitiesProfile}.
\end{remark}

\begin{remark}\label{rem:UnicoPezzo}
    Let $N \in \N$ with $N\ge 2$. Let $(X,\dist,\mathcal{H}^N)$ be a noncompact $\RCD(0,N)$ space. Assume there exists $v_0>0$ such that $\mathcal{H}^N(B_1(x))\geq v_0$ for every $x\in X$.
    Then applying \autoref{thm:AsymptoticMassDecomposition} to any perimeter minimizing sequence on $X$ gives either $\overline{N}=0$ or $\overline{N}=1$, i.e., either all the mass remains in $X$ or all the mass escapes to a unique limit at infinity.\\
    The previous claim readily follows since in the case of nonnegative curvature, \autoref{thm:DifferentialInequalitiesProfile} implies that the isoperimetric profile is strictly subadditive. 
\end{remark}

In the next remark, we point out further information which can be deduced out of \autoref{thm:DifferentialInequalitiesProfile} on regularity properties for the isoperimetric profile and isoperimetric sets.

\begin{remark}[{Improved regularity of isoperimetric profile and sets \cite{APPSa}}]\label{rem:RegularityNew}
Let $N \in \N$ with $N\ge 2$, and let $K\in\R$. Let $(X,\dist,\mathcal{H}^N)$ be a noncompact $\RCD(K,N)$ space. Assume there exists $v_0>0$ such that $\mathcal{H}^N(B_1(x))\geq v_0$ for every $x\in X$. Then the following holds.
\begin{itemize}
    \item For any $0<V_1<V_2$ there is $L=L(K,N,v_0,V_1,V_2)$ such that $I_X$ is $L$-Lipschitz on $[V_1,V_2]$.

    \item Let $E\subset X$ be an isoperimetric set. Then $E$ satisfies uniform volume and perimeter density estimates at boundary points with constant depending only on $K,N,v_0,\haus^N(E)$.
\end{itemize}
We stress that the previous regularity properties are independent of the specific ambient $X$. This allows to derive a strong stability result for sequences of isoperimetric sets in a sequence of $\RCD(K,N)$ spaces satisfying a uniform lower bound on the volume of unit balls, see \cite{APPSa}. For example, if $E_i$ is a sequence of isoperimetric sets of volume $1$ in a sequence of unbounded pointed $\RCD(K,N)$ spaces $(X_i,\dist_i, \haus^N,x_i)$, $\inf_{x \in X_i}\haus^N(B_1(x)) \ge v_0$ and $E_i\subset B_R(x_i)$, for some $v_0,R>0$, then the sequence is precompact with respect to $L^1$-strong and Hausdorff convergence (in a realization), any limit set is isoperimetric, and mean curvature barriers of any $E_i$ are uniformly bounded and converge to mean curvature barriers of limit sets. We mention that a similar stability result for mean curvature barriers was observed in \cite{Ketterer21}.
\end{remark}

We conclude with a final question related to \autoref{thm:DifferentialInequalitiesProfile}. We consider the case $K=0$ only.

\begin{question}\label{question:InverseInequalities}
    Let $N \in \N$ with $N\ge 2$. Let $\mathscr{C}$ be the class of concave functions $\psi:[0,+\infty)\to [0,+\infty)$ such that $\psi(0)=0$ and $\psi(V)>0$ for $V>0$. Let $\mathscr{R}$ be the class of noncompact $\RCD(0,N)$ spaces $(X,\dist,\haus^N)$ such that $\inf_{x \in X} \haus^N(B_1(x))>0$. Let $\Psi: \mathscr{R} \to \mathscr{C}$ be the map $\Psi(X) \eqdef I_X^{\frac{N}{N-1}}$. Is $\Psi$ injective? What is the image of $\Psi$? What is the image of $\Psi$ restricted to smooth manifolds?
\end{question}

The previous question is also related to the investigation of further regularity properties of isoperimetric profiles, for instance a better understanding of their differentiability.

Obviously, \autoref{question:InverseInequalities} can be suitably stated for arbitrary Ricci lower bounds and in the setting of compact spaces.

\section{Isoperimetric inequalities and existence results on nonnegatively curved spaces}\label{sec:Applications}

This section is devoted to the proof of old and new results, making a fundamental use of the differential inequalities of the profile from \autoref{thm:DifferentialInequalitiesProfile}.

\subsection{Geometric analysis on spaces with Ricci lower bounds}\label{sec:GeomAnal}

We recall some fundamental results in the theory of the Riemannian Geometry of manifolds with Ricci bounded below which naturally extend to the context of $\RCD(K,N)$ spaces.

\medskip

Let $K\in  \R$ and $N \in \N$ with $N\ge 2$. Recalling \eqref{eq:CosSin}, for $r\ge0$ we denote by
\[
\sigma_{K,N}(r) \eqdef N\omega_N \sin_{\frac{K}{N-1}}^{N-1} (r) ,
\qquad\qquad 
v_{K,N}(r) \eqdef N \omega_N \int_0^r \sin_{\frac{K}{N-1}}^{N-1}(t) \de t,
\]
the perimeter and the volume volume of a ball of radius $r$ in the simply connected $N$-dimensional Riemannian manifold of constant sectional curvature $K/(N-1)$ (here $r \le \pi\sqrt{N-1}/\sqrt{K}$ if $K>0$), where $\omega_N$ is the volume of the Euclidean unit ball of $\R^N$ (see \cite{Petersen2016} and \eqref{eq:DefModels}).\\
We can state the $\RCD$ version of the Bishop--Gromov monotonicity of volume ratios with rigidity.

\begin{theorem}[{Generalized Bishop--Gromov monotonicity and rigidity, \cite[Theorem 5.1]{OhtaMCP} \& \cite[Theorem 2.3]{Sturm2} \& \cite[Theorem 1.1, Theorem 4.1]{DePhilippisGigli16}}]\label{thm:BishopGromov}
Let $N \in \N$ with $N\ge 2$, and $K \in \R$. Let $(X,\dist,\haus^N)$ be an $\RCD(K,N)$ space. Let $o \in X$. Then
\[
\begin{drcases}
(0,+\infty) & \text{ if } K\le0,\\
(0,\pi\sqrt{N-1}/\sqrt{K}] & \text{ if } K >0
\end{drcases} \ni r \mapsto \frac{\haus^N(B_r(o))}{v_{K,N}(r)},
\]
is nonincreasing, and
\[
\begin{drcases}
(0,+\infty) & \text{ if } K\le0,\\
(0,\pi\sqrt{N-1}/\sqrt{K}] & \text{ if } K >0
\end{drcases} \ni r \mapsto \frac{P(B_r(o))}{\sigma_{K,N}(r)},
\]
is essentially nonincreasing, i.e., $P(B_r(o))/\sigma_{K,N}(r) \ge P(B_R(o))/\sigma_{K,N}(R)$ for any $R>0$ such that the ratio is defined, and for a.e. $r\in(0,R]$.\\
Moreover
\begin{itemize}
    \item if $K=0$ and $\haus^N(B_R(o))/R^N=\haus^N(B_r(o))/r^N$ for some $0<r<R$, then $(B_{\frac{R}{2}}(o),\dist)$ is isometric to the ball of radius $R/2$ centered at a tip in the cone $C(L)$ over an $\RCD(N-2,N-1)$ space $(L,\dist_L,\haus^{N-1})$;
    
    \item if $K=N-1$ and $\haus^N(B_R(o))/v_{N-1,N}(R)=\haus^N(B_r(o))/v_{N-1,N}(r)$ for some $0<r<R$, then $(B_{\frac{R}{2}}(o),\dist)$ is isometric to the ball of radius $R/2$ centered at a pole in the spherical suspension $S(L)$ over an $\RCD(N-2,N-1)$ space $(L,\dist_L,\haus^{N-1})$.
\end{itemize}
\end{theorem}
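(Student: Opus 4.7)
The plan is to reduce both assertions to the generalized Laplacian comparison for the distance function from $o$. Setting $r(x)\eqdef\dist(o,x)$ and $k\eqdef K/(N-1)$, the synthetic Bochner inequality built into the $\RCD(K,N)$ condition yields, as a measure-valued inequality on $X\setminus\{o\}$,
\[
\bm{\Delta} r \le (N-1)\,\frac{\sin_k'(r)}{\sin_k(r)},
\]
on the appropriate interval dictated by the sign of $K$. This is the nonsmooth analogue of the classical Laplacian comparison; in the $\RCD$ literature it is by now standard, going back to the optimal transport formulation of Sturm and Lott--Villani via the $\tau_{K,N}$ distortion coefficients, and admitting a direct proof out of the Bochner inequality of \autoref{def:RCD}.

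From here I would derive the essential monotonicity of $r\mapsto P(B_r(o))/\sigma_{K,N}(r)$ by testing $\bm{\Delta} r$ against the characteristic function of $B_r(o)$ via the Gauss--Green formula \cite[Theorem 1.6]{BPSGaussGreen}; this yields $P(B_r(o))\le (N-1)\,(\sin_k'/\sin_k)(r)\,\haus^N(B_r(o))$ for a.e. $r$. Combined with the coarea identity $\tfrac{\d}{\d r}\haus^N(B_r(o))=P(B_r(o))$ a.e., the resulting differential inequality integrates to the claimed essential monotonicity of the perimeter ratio, whence the volume ratio monotonicity follows by one further integration. An alternative, equivalent route is to apply the $\CD(K,N)$ condition directly to the optimal transport from $\delta_o$ to the normalized restriction of $\haus^N$ to $B_r(o)$, as carried out in \cite{Sturm2, OhtaMCP}.

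For the rigidity, the strategy is to observe that equality in the volume ratio at two radii $r<R$ forces equality in the Laplacian comparison throughout the annulus $B_R(o)\setminus\overline{B_r(o)}$. In the case $K=0$ this amounts to the identity $\bm{\Delta}(r^2/2)=N\cdot\haus^N$ on that annulus, which is precisely the infinitesimal volume cone condition. The theorem of De~Philippis--Gigli \cite[Theorem 1.1, Theorem 4.1]{DePhilippisGigli16} then upgrades this identity to an isometry of $B_{R/2}(o)$ with the corresponding ball around a tip of a metric cone $C(L)$, and simultaneously identifies the cross-section $L$ as an $\RCD(N-2,N-1)$ space through stability of the $\RCD$ condition under rescaling. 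For $K=N-1$, the quadratic primitive is replaced by the warping $1-\cos r$ and one invokes the analogous ``volume suspension implies metric suspension'' rigidity, also leveraging the characterization of spherical suspensions from \cite{Ketterer15}.

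The hard part will be the rigidity step. The monotonicity requires only the Laplacian comparison and radial integration, tools by now routine in the $\RCD$ setting. Passing from the infinitesimal equality in $\bm{\Delta} r$ to a global isometric description as a cone or a spherical suspension, however, cannot proceed via the classical smooth Jacobi-field rigidity; it requires the nonsmooth first-order calculus of Gigli and the delicate variational arguments of \cite{DePhilippisGigli16}, which extract a warped-product splitting from the prescribed Hessian of $r^2/2$ in the absence of any underlying manifold structure. Once that black box is invoked, however, the organization of the proof itself is linear: Laplacian comparison, integration, saturation of the comparison under equality, application of the volume-cone/volume-suspension rigidity theorem.
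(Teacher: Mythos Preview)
The paper does not supply its own proof of \autoref{thm:BishopGromov}: the theorem is stated as a known result, with the monotonicity attributed to \cite{Sturm2, OhtaMCP} and the rigidity to \cite{DePhilippisGigli16}, and the paper moves on immediately. So there is no in-paper argument to compare against; your outline is essentially a sketch of the arguments contained in those references, which is the right spirit.

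One small technical correction to your monotonicity step: testing $\bm{\Delta} r$ against $\chi_{B_r(o)}$ via Gauss--Green does not produce the pointwise inequality $P(B_r(o))\le (N-1)\,(\sin_k'/\sin_k)(r)\,\haus^N(B_r(o))$ that you state. What Gauss--Green on an annulus (or coarea applied to the right-hand side) actually yields is an integral inequality of Gronwall type for $r\mapsto P(B_r(o))$, from which the essential monotonicity of $P(B_r(o))/\sigma_{K,N}(r)$ follows; the volume ratio monotonicity then comes by one further integration, as you say. This does not affect the overall strategy, which is the standard one and matches the cited literature.
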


We remark that the monotonicity part in \autoref{thm:BishopGromov} also holds in the more general frameworks of $\CD(K,N)$ spaces \cite[Theorem 2.3]{Sturm2} and of $\mathsf{MCP}(K,N)$ spaces \cite[Theorem 5.1]{OhtaMCP}.

\medskip

A version of the classical maximal diameter Bonnet--Myers Theorem \cite{Petersen2016} holds at the level of $\RCD$ spaces.

\begin{theorem}[{Generalized Bonnet--Myers Theorem, \cite[Theorem 4.3]{OhtaMCP} \& \cite[Theorem 1.4]{Ketterer15}}]\label{thm:BonnetMyers}
Let $N \in \N$ with $N\ge 2$. Let $(X,\dist,\haus^N)$ be an $\RCD(N-1,N)$ space. Then $\diam(X) \le \pi$. Moreover equality holds if and only if $X$ is isometric to the spherical suspension over an $\RCD(N-2,N-1)$ space. In particular, if $X$ is a Riemannian manifold, equality holds if and only if $X$ is isometric to $\S^N$.
\end{theorem}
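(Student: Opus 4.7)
The plan is to establish the diameter bound via the Laplacian comparison available in the $\RCD(N-1,N)$ setting, and to derive the rigidity from an Obata-type eigenfunction identity coupled with the equality case of the Bochner inequality. For the bound $\diam(X)\le\pi$, I would use the distributional Laplacian comparison $\boldsymbol{\Delta} d_p \le (N-1)\cot(d_p)$ on $X\setminus\{p\}$, a consequence of the Bochner inequality together with the $\RCD$ calculus. If a point $q$ existed with $d(p,q) > \pi$, then along a minimal geodesic $p\to q$ the right-hand side would diverge to $-\infty$ as $d_p\to\pi^-$, contradicting local integrability on the annulus $\{d_p\in(\pi-\varepsilon,\pi+\varepsilon)\}$. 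Equivalently, the monotonicity in \autoref{thm:BishopGromov} combined with the $\CD(N-1,N)$ distortion coefficients $\tau^{(t)}_{N-1,N}(\theta)$, which become infinite for $\theta > \pi$, gives the same conclusion.

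For the rigidity statement, assume $\diam(X) = \pi$ and pick antipodal points $p, q$ with $d(p,q) = \pi$. I would introduce the candidate eigenfunctions
\[
h \eqdef -\cos(d_p), \qquad \tilde h \eqdef -\cos(d_q),
\]
both Lipschitz from $X$ to $[-1,1]$. Chain rule plus $|\nabla d_p| = 1$ almost everywhere give $|\nabla h|^2 = 1 - h^2$, and Laplacian comparison upgrades to $\boldsymbol{\Delta} h \le -N h$ (and symmetrically $\boldsymbol{\Delta}\tilde h \le -N\tilde h$). The triangle inequality $d_p(x) + d_q(x) \ge \pi$ combined with the monotonicity of $\cos$ on $[0,\pi]$ yields $u \eqdef h + \tilde h \ge 0$, while $u = 0$ on any minimizing geodesic from $p$ to $q$. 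Since $\boldsymbol{\Delta} u + N u \le 0$, the function $u$ is superharmonic, and the strong minimum principle (available in $\RCD$ spaces) forces $u \equiv 0$. This shows $d_p + d_q \equiv \pi$, hence $\tilde h = -h$, and combining the two inequalities $\boldsymbol{\Delta} h \le -N h$ and $\boldsymbol{\Delta}(-h) \le Nh$ yields the eigenfunction identity $\boldsymbol{\Delta} h = -N h$, matching the first nontrivial Laplacian eigenvalue of $\mathbb S^N$.

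The final step is the Obata-type rigidity. Plugging $h$ into the Bochner inequality
\[
\tfrac{1}{2}\boldsymbol{\Delta}|\nabla h|^2 \ge \frac{(\boldsymbol{\Delta} h)^2}{N} + \nabla h\cdot\nabla\boldsymbol{\Delta} h + (N-1)|\nabla h|^2,
\]
and using $\boldsymbol{\Delta} h = -N h$ together with $|\nabla h|^2 = 1-h^2$, a direct computation shows that every inequality in Bochner is saturated pointwise. In the tensorial calculus of Gigli this forces the generalized Hessian identity $\mathrm{Hess}\, h + h\cdot \mathrm{Id} = 0$. The flow of $\nabla h$ then produces an isometric identification of $(X,\dist)$ with the spherical suspension over the midlevel set $L \eqdef h^{-1}(0) = d_p^{-1}(\pi/2)$, endowed with the intrinsic distance and $\haus^{N-1}$, and the consistency statement in \autoref{def:ConiSospensioni} upgrades $(L,\dist_L,\haus^{N-1})$ to an $\RCD(N-2,N-1)$ space. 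In the smooth case, iterating this reduction (or appealing directly to the classification of simply connected space forms) gives $X\cong\mathbb S^N$.

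The main obstacle is this final pointwise rigidity analysis: passing from the scalar equation $\boldsymbol{\Delta} h = -Nh$ to the generalized Hessian identity, and then to a genuine isometric splitting onto a suspension, requires the full strength of Gigli's second-order calculus on $\RCD$ spaces together with a careful regularity and gluing argument for the level sets of $h$. This is the technical core of Ketterer's theorem and it cannot be bypassed by the first-order isoperimetric techniques developed earlier in this survey.
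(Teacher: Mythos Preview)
The paper does not supply its own proof of this theorem: it is stated as a preliminary result with references to Ohta for the diameter bound and Ketterer for the rigidity, followed only by a one-line remark on the greater generality of the bound in the $\mathsf{MCP}$ class. Your sketch is therefore not competing against an argument in the paper but against the cited literature, and it is a correct outline of the strategy found there. The diameter bound via Laplacian comparison (or, as you also note, via Bishop--Gromov and the blow-up of the distortion coefficients past $\pi$) is standard, and your construction of $h=-\cos(d_p)$ together with the antipodal trick $h+\tilde h\equiv 0$ to upgrade the Laplacian inequality to the eigenfunction equation $\boldsymbol{\Delta}h=-Nh$ is a clean entry point into Ketterer's Obata-type rigidity.

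You correctly identify the genuine obstacle yourself: passing from $\boldsymbol{\Delta}h=-Nh$ to the Hessian identity $\mathrm{Hess}\,h+h\,\mathrm{Id}=0$ and then to the isometric suspension splitting is the substance of Ketterer's paper, relying on the self-improvement of the Bochner inequality and on Gigli's second-order tensor calculus in a way your sketch does not attempt to reproduce. One minor point of care: the strong minimum principle you invoke for $u=h+\tilde h$ is not entirely routine in this setting, since $u$ is merely Lipschitz and $\boldsymbol{\Delta}u$ is a priori only a signed measure; making this step rigorous requires either the viscosity/distributional equivalence for Laplacian bounds on $\RCD$ spaces or a heat-flow argument. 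With that caveat, your proposal is a faithful summary of how the cited result is actually proved, which is all the paper itself asks of this statement.
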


We remark that the upper bound on the diameter in \autoref{thm:BonnetMyers} actually holds in the greater generality of $\mathsf{MCP}(K,N)$ spaces, see \cite[Theorem 4.3]{OhtaMCP}.

\medskip

We mention that two further classical results in Riemannian geometry holding at the level of $\RCD(K,N)$ spaces are given by the Laplacian comparison theorem, see \cite[Corollary 5.15]{GigliOnTheDiffStructure}, and the Cheeger--Gromoll splitting theorem \cite{CheegrGromollSplitting}, see \cite[Theorem 1.4]{Gigli13}.




\subsection{Sharp and rigid isoperimetric inequality on spaces with nonnegative Ricci and Euclidean volume growth}

Exploiting the differential inequalities of the isoperimetric profile, we prove the sharp isoperimetric inequality on $\RCD(0,N)$ spaces $(X,\dist,\haus^N)$ with Euclidean volume growth, together with a proof of its rigidity in this class.

\begin{definition}
Let $N \in \N$ with $N\ge 2$. Let $(X,\dist,\haus^N)$ be an $\RCD(0,N)$ space. Let $o \in X$. We define the \emph{asymptotic volume ratio} by
\[
\AVR(X)\eqdef \lim_{r\to+\infty} \frac{\haus^N(B_r(o))}{\omega_Nr^N}.
\]
\end{definition}

The previous definition is clearly independent of the choice of $o \in X$ and it is well-posed by \autoref{thm:BishopGromov}. For an $\RCD(0,N)$ space $(X,\dist,\haus^N)$, \autoref{thm:BishopGromov} and the fact that the volume density $x \mapsto \lim_{r\searrow0} \haus^N(B_r(x))/(\omega_Nr^N)$ is lower semicontinuous - hence, $\le 1$ - implies that $\AVR(X) \in [0,1]$. Moreover $\AVR(X)=1$ if and only if $X$ is isometric to the Euclidean space $\R^N$ endowed with Lebesgue measure (see \cite[Theorem 1.6]{DePhilippisGigli18} after \cite{Colding97}). If $\AVR(X)>0$ we say that $X$ has \emph{Euclidean volume growth}.

We are ready for the main result of the section.

\begin{theorem}\label{thm:DisugisoperimetricaAVR}
Let $N \in \N$ with $N\ge 2$. Let $(X,\dist,\haus^N)$ be an $\RCD(0,N)$ space with $\AVR(X)>0$. Then
\begin{equation}\label{eq:DisugIsoperimetricaAVR}
I_X(V) \ge N(\AVR(X)\omega_N)^{\frac1N} \,V^{\frac{N-1}{N}},
\end{equation}
for any $V>0$. Moreover if there exists an isoperimetric set $E\subset X$ such that
\[
P(E) = N(\AVR(X)\omega_N)^{\frac1N} \,(\haus^N(E))^{\frac{N-1}{N}},
\]
then $X$ is isometric to a cone $C$ over an $\RCD(N-2,N-1)$ space and $E$ is isometric to a ball centered at a tip of $C$. In particular, if $X$ is a Riemannian manifold, such an isoperimetric set exists if and only if $X$ is isometric to $\R^N$ and $E$ is isometric to a Euclidean ball.
\end{theorem}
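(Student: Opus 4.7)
The strategy is to combine the asymptotic mass decomposition with the Heintze--Karcher estimates provided by mean curvature barriers, and to exploit the concavity of $\psi\eqdef I_X^{N/(N-1)}$ from \autoref{thm:DifferentialInequalitiesProfile} in the rigidity part. Fix $V>0$ and apply \autoref{thm:AsymptoticMassDecomposition} to a bounded perimeter-minimizing sequence, producing pieces $\Omega\subset X$ and $Z_j\subset X_j$ (where the $X_j$ are pmGH limits at infinity of $X$) with $I_X(V)=P(\Omega)+\sum_j P(Z_j)$ and $V=\haus^N(\Omega)+\sum_j\haus^N(Z_j)$. By volume convergence (\autoref{thm:VolumeConvergence}) and Bishop--Gromov monotonicity (\autoref{thm:BishopGromov}), each $X_j$ is $\RCD(0,N)$ with $\AVR(X_j)\ge \AVR(X)>0$.

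\smallskip

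The key step is to prove that any piece $E$ of positive measure, sitting in an ambient $Y$ with $\AVR(Y)\ge \AVR(X)$, satisfies $P(E)\ge N(\AVR(X)\omega_N)^{1/N}\haus^N(E)^{(N-1)/N}$. By \autoref{thm:MeanCurvatureBarriers} and \autoref{rem:AVRcPositiva} (applicable since $\AVR(Y)>0$ gives $\haus^N(B_r(o))/r\to+\infty$ when $N\ge 2$), $E$ admits a strictly positive mean curvature barrier $c>0$. Applying \autoref{cor:HK} as $t\to-\infty$, where $\haus^N(E_t)\to 0$ by boundedness of $E$, yields
\begin{equation*}
\haus^N(E)\le \tfrac{N-1}{cN}P(E).
\end{equation*}
Applying \autoref{cor:HK} for $t>0$ and combining with $B_t(x_0)\subset E_t$ (for any $x_0\in E$) and Bishop--Gromov $\haus^N(E_t)\ge \AVR(Y)\omega_N t^N$, dividing by $t^N$ and letting $t\to+\infty$ gives
\begin{equation*}
\AVR(Y)\omega_N\le \tfrac{c^{N-1}}{N(N-1)^{N-1}}P(E).
\end{equation*}
Multiplying these two bounds to eliminate $c$ yields $P(E)^N\ge \AVR(Y)\omega_N N^N\haus^N(E)^{N-1}$, proving the piece bound. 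Summing this bound over the pieces of the decomposition and using the elementary subadditivity $\sum_j a_j^{(N-1)/N}\ge (\sum_j a_j)^{(N-1)/N}$ for $a_j\ge 0$ (concavity of $x\mapsto x^{(N-1)/N}$ with value $0$ at $0$) gives \eqref{eq:DisugIsoperimetricaAVR}.

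\smallskip

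For the rigidity, assume that an isoperimetric region $E\subset X$ of volume $V_0$ saturates \eqref{eq:DisugIsoperimetricaAVR}. Combining the concavity of $\psi$ from \autoref{thm:DifferentialInequalitiesProfile}, the boundary value $\psi(0)=0$, the lower bound $\psi(V)\ge C_NV$ with $C_N\eqdef N^{N/(N-1)}(\AVR(X)\omega_N)^{1/(N-1)}$, and the equality $\psi(V_0)=C_NV_0$, a short argument (two contact points between a concave function vanishing at $0$ and a supporting line through the origin force equality everywhere, cf.\ \autoref{sec:Appendix}) yields $\psi(V)=C_NV$ for all $V\ge 0$. Consequently every inequality in the derivation of the piece bound for $E$ must be an equality; in particular the Heintze--Karcher estimates for $E$ are tight for every $t\in\R$, which forces $\haus^N(B_r(x_0))/(\omega_Nr^N)$ to be constant in $r$ at any $x_0\in E$. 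The Bishop--Gromov rigidity in \autoref{thm:BishopGromov} then identifies $X$ with a metric cone $C$ over an $\RCD(N-2,N-1)$ space with $x_0$ a tip, and tightness in Heintze--Karcher identifies $E$ with a ball centered at this tip. In the smooth category, the only smooth $\RCD(0,N)$ metric cone is $\R^N$ (its link must be the round sphere), so $X\cong \R^N$ and $E$ is a Euclidean ball.

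\smallskip

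The main obstacle is the rigidity step, where one must upgrade the \emph{asymptotic} Heintze--Karcher equality used in the proof of the inequality into a \emph{pointwise} tightness for all $t\in\R$, so as to activate the rigid form of the Bishop--Gromov theorem. The linearity of $\psi$, forced by a single contact point together with concavity, is precisely the mechanism that enables this upgrade.
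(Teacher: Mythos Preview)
Your proof of the inequality \eqref{eq:DisugIsoperimetricaAVR} is correct and follows a genuinely different route from the paper's. The paper argues by contradiction: assuming $\psi(V_0)<C_NV_0$, it uses the concavity of $\psi$ (from \autoref{thm:DifferentialInequalitiesProfile}) to locate a differentiability point $v_1$ with small slope, invokes \autoref{rem:UnicoPezzo} to reduce to a \emph{single} piece, and then uses only the outward Heintze--Karcher bound to contradict the volume growth. You instead bound every piece directly by combining the inward ($t\to-\infty$) and outward ($t\to+\infty$) limits of \autoref{cor:HK} to eliminate $c$, and then sum via subadditivity of $x\mapsto x^{(N-1)/N}$. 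Your argument is more direct and, notably, does not need the concavity of $\psi$ for the inequality; the price is having to handle several pieces and to use both directions of \autoref{cor:HK}.

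Your rigidity argument, however, has a genuine gap. You correctly deduce that $\psi$ is linear, hence that your two scalar inequalities $\haus^N(E)\le\tfrac{N-1}{cN}P(E)$ and $\theta\omega_N\le\tfrac{c^{N-1}}{N(N-1)^{N-1}}P(E)$ are equalities for the barrier $c$. But these are \emph{asymptotic} consequences of \autoref{cor:HK} (obtained in the limits $t\to\pm\infty$); they do not force the Heintze--Karcher bound on $\haus^N(E_t)$ to be an equality for intermediate $t$, and there is no mechanism linking such tightness to the constancy of $r\mapsto\haus^N(B_r(x_0))/r^N$ at an \emph{arbitrary} $x_0\in E$. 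In fact that last claim is false in the target conclusion: on a non-Euclidean cone only the tip has constant volume ratio, so the assertion ``at any $x_0\in E$'' cannot hold. The paper closes the argument by producing a \emph{single} distinguished point: from $\haus^N(E)=\tfrac{N-1}{cN}P(E)$ and the inward bound $\haus^N(E)\le P(E)\int_0^{d}(1-\tfrac{c}{N-1}s)^{N-1}_+\de s$ with $d\eqdef\sup_{E}\dist(\cdot,X\setminus E)$ one reads off $d\ge(N-1)/c$, hence some ball $B_{(N-1)/c}(x_0)\subset E$; since $\haus^N(E)=\theta\omega_N\big(\tfrac{N-1}{c}\big)^N$, Bishop--Gromov forces $\haus^N(B_{(N-1)/c}(x_0))=\theta\omega_N\big(\tfrac{N-1}{c}\big)^N$, and the rigidity in \autoref{thm:BishopGromov} (applied with $R\to\infty$) gives the cone structure and $E=B_{(N-1)/c}(x_0)$.
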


Inequality \eqref{eq:DisugIsoperimetricaAVR} has been proved in several recent works at different levels of generality. In \cite{AgostinianiFogagnoloMazzieri} it was proved in the class of $3$-dimensional Riemannian manifolds with nonnegative Ricci and $\AVR>0$, later extended up to dimension $7$ in \cite{FogagnoloMazzieri}, exploiting nonlinear potential theory and the relation between perimeter and capacities; in \cite{BrendleFigo} the inequality was proved on any Riemannian manifold with nonnegative Ricci and $\AVR>0$ by using an ABP-type argument; \cite{Johne} provides a proof analogous to the one in \cite{BrendleFigo} in the class of weighted manifolds with nonnegative generalized Ricci curvature and positive asymptotic volume ratio; in \cite{BaloghKristaly} (resp., \cite{CavallettiManini}) the inequality is proved in the class of $\CD(0,N)$ (resp., $\mathsf{MCP}(0,N)$) spaces with $\AVR>0$ exploiting the Brunn--Minkowski inequality (resp., $1$-dimensional localization technique); \cite{APPSb} contains a proof for noncollapsed $\RCD(0,N)$ spaces with $\AVR>0$ and exploits  \autoref{thm:AsymptoticMassDecomposition} and \autoref{thm:MeanCurvatureBarriers}.\\
Furthermore, rigidity for the inequality is proved in \cite{AgostinianiFogagnoloMazzieri, BrendleFigo, FogagnoloMazzieri} in the class of smooth sets, in \cite{CavallettiManiniRigidity} in $\CD(0,N)$ spaces in the class of bounded sets, in \cite{APPSb} in the class of noncollapsed $\RCD(0,N)$ spaces for any isoperimetric set. Finally, exploiting \cite{CavallettiManiniRigidity}, the regularity result in \cite{AntonelliPasqualettoPozzettaViolo} implies rigidity for the isoperimetric inequality in the whole class of possibly collapsed $\RCD(0,N)$ spaces without restrictions on the isoperimetric set achieving equality.\\
We mention also that isoperimetric inequalities in the same spirit of \autoref{thm:DisugisoperimetricaAVR} are proved in \cite{HanIsoperimetric0Infty} for $\RCD(0,\infty)$ spaces with finite volume entropy, and in \cite{ManiniFinsler} for Finsler manifolds. A pioneering study of isoperimetric inequalities in terms of volume growth was carried out in \cite{CoulhonSaloffCoste}.

\medskip

We remark that \eqref{eq:DisugIsoperimetricaAVR} is sharp on \emph{any} space $X$ as in the assumptions of \autoref{thm:DisugisoperimetricaAVR}; in fact, \eqref{eq:DisugIsoperimetricaAVR} is seen to be sharp for large volumes. Indeed the ratio $P(B_r(o))/\haus^N(B_r(o))^{\frac{N-1}{N}}$ tends to $N(\AVR(X)\omega_N)^{\frac1N}$ as $r\to+\infty$ by \autoref{thm:BishopGromov}.

\medskip

A direct corollary of \autoref{thm:DisugisoperimetricaAVR} is the solution to the isoperimetric problem on noncollapsed $\RCD(0,N)$ cones, recovering the result from \cite{LionsPacella} on Euclidean convex cones (recall \autoref{rem:RelativePerimeter}) and the one from \cite{MorganRitore02} for cones over compact manifolds.

\begin{corollary}\label{cor:IsoperimetricaSuConi}
    Let $N \in \N$ with $N\ge 2$. Let $(C,\dist,\haus^N)$ be an $\RCD(0,N)$ cone over an $\RCD(N-2,N-1)$ space. Then
    \[
    I_C(V) = N(\AVR(C)\omega_N)^{\frac1N} \,V^{\frac{N-1}{N}},
    \]
    for any $V>0$, and balls centered at tips of $C$ are the only isoperimetric sets.
\end{corollary}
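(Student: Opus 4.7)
The plan is to combine the sharp isoperimetric inequality of \autoref{thm:DisugisoperimetricaAVR} with the exact scaling symmetry of a cone. Since the dilation $(t,x)\mapsto(\lambda t,x)$ is a metric homothety of $C$ of ratio $\lambda>0$ fixing every tip $o$, the function $r\mapsto \haus^N(B_r(o))/r^N$ is constant on $(0,+\infty)$; passing to the limit $r\to+\infty$ identifies this constant with $\AVR(C)\,\omega_N$, and in particular $\AVR(C)>0$. Thus \autoref{thm:DisugisoperimetricaAVR} applies and yields the lower bound
\[
I_C(V) \ge N(\AVR(C)\omega_N)^{1/N}\,V^{(N-1)/N}\qquad\text{for every }V>0.
\]

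For the matching upper bound, I would test the profile with balls centered at a tip $o$. Using the representation $\Per(E,\cdot)=\haus^{N-1}\res\mathcal{F}E$ from \eqref{eq:RepresentationPerimeter} together with the fact that metric homotheties of ratio $\lambda$ rescale $\haus^{N-1}$ by $\lambda^{N-1}$, one obtains $P(B_{\lambda r}(o))=\lambda^{N-1}P(B_r(o))$ while $\haus^N(B_{\lambda r}(o))=\lambda^N \haus^N(B_r(o))$. Consequently the ratio $P(B_r(o))/\haus^N(B_r(o))^{(N-1)/N}$ is independent of $r>0$, and by the sharpness remark following \autoref{thm:DisugisoperimetricaAVR} (a consequence of \autoref{thm:BishopGromov}) this ratio tends to $N(\AVR(C)\omega_N)^{1/N}$ as $r\to+\infty$. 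It therefore equals this value for every $r>0$, which simultaneously produces the reverse inequality and shows that every ball centered at a tip is isoperimetric.

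For the uniqueness statement, if $E\subset C$ is any isoperimetric set, then the formula just established forces $E$ to saturate \eqref{eq:DisugIsoperimetricaAVR}. The rigidity part of \autoref{thm:DisugisoperimetricaAVR} then implies that $E$ is isometric to a ball centered at a tip of $C$. The only mildly delicate point of the argument is justifying the homothety scaling of the perimeter in the $\RCD$ framework, but this is immediate from \eqref{eq:RepresentationPerimeter} together with the standard behaviour of Hausdorff measures under metric dilations; no isoperimetric analysis beyond what is already quoted from \autoref{thm:DisugisoperimetricaAVR} and \autoref{thm:BishopGromov} is needed.
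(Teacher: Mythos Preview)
Your argument is correct and matches the paper's intended reasoning: the paper presents this result simply as a ``direct corollary of \autoref{thm:DisugisoperimetricaAVR}'', and your proof spells out exactly the obvious route --- the lower bound from \eqref{eq:DisugIsoperimetricaAVR}, the upper bound from the dilation symmetry of the cone making the ratio $P(B_r(o))/\haus^N(B_r(o))^{(N-1)/N}$ constant in $r$, and uniqueness from the rigidity clause of \autoref{thm:DisugisoperimetricaAVR}. One minor remark: the scaling of perimeter under the homothety $\phi_\lambda$ could also be read off directly from \autoref{def:BVperimetro} (since $\lip$, the $L^1_{\rm loc}$ topology, and $\meas$ all scale in the evident way), which would spare you the appeal to \eqref{eq:RepresentationPerimeter}; but either justification is fine.
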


We now provide a proof of \autoref{thm:DisugisoperimetricaAVR} which partly simplifies the one from \cite{APPSb}, exploiting \autoref{thm:DifferentialInequalitiesProfile} more directly, as we shall do for the proof of the L\'{e}vy--Gromov inequality below.

\begin{proof}[Proof of \autoref{thm:DisugisoperimetricaAVR}]
Denote $\theta\eqdef \AVR(X)$ for ease of notation. Let $f_1(V)\eqdef N^{\frac{N}{N-1}}(\theta\omega_N)^{\frac{1}{N-1}} \,V$ and $f_2\eqdef I_X^{\frac{N}{N-1}}$. Suppose by contradiction that there is $V_0>0$ such that $f_1(V_0)>f_2(V_0)$. Since $f_2$ is concave by \autoref{thm:DifferentialInequalitiesProfile} and \autoref{lem:Concavity}, then there exists $v_1 \in (0,V_0)$ such that $I_X$ is differentiable at $v_1$ and $f_2'(v_1)<f_1'\equiv N^{\frac{N}{N-1}}(\theta\omega_N)^{\frac{1}{N-1}}$. By \autoref{rem:InfPositivo} and concavity, there holds $I'_X(v_1)\ge 0$. Let us apply the mass decomposition \autoref{thm:AsymptoticMassDecomposition} at volume $v_1$. By \autoref{rem:UnicoPezzo}, we get the existence of an isoperimetric set $E \subset Y$ with $P(E)=I_X(v_1)$ and measure equal to $v_1$, where $(Y,\dist_Y,\haus^N)$ either coincides with $X$ or is a limit at infinity of $X$. Observe that $\AVR(Y)\ge \theta$ by \autoref{thm:BishopGromov} and \autoref{thm:VolumeConvergence}. Hence by \autoref{rem:DerivataProfiloBarriera} and \autoref{rem:AVRcPositiva}, $E$ has mean curvature barrier $c=I_X'(v_1)  >0$. Thus \autoref{cor:HK} implies
\begin{equation}\label{eq:zzz}
\haus^N(E_t) \le \haus^N(E) + P(E) \frac{N-1}{c\,N} \left[ \left( 1+ \frac{c}{N-1}t\right)^N -1 \right]
\end{equation}
where $E_t\eqdef \{ y \in Y \st \dist_Y(y,E)< t\}$ for $t>0$. Condition $f_2'(v_1)<f_1'$ is rewritten as
\begin{equation}\label{eq:zzz2}
    \left( \frac{N}{N-1} \right)^{N-1} P(E) c^{N-1} < N^N\,\theta \omega_N.
\end{equation}
Dividing \eqref{eq:zzz} by $t^N$, exploiting \eqref{eq:zzz2}, and recalling $\AVR(Y)\ge \theta$, letting $t\to+\infty$ we get the contradiction
\[
\theta \omega_N \le \limsup_{t\to+\infty} \frac{\haus^N(E_t) }{t^N} < \theta \omega_N.
\]
Suppose now that there exists an isoperimetric set $E\subset X$ such that $P(E) = N(\theta\omega_N)^{\frac1N} \,(\haus^N(E))^{\frac{N-1}{N}}$. Denote $\overline{V}\eqdef \haus^N(E)$, hence $f_1(\overline{V})= f_2(\overline{V})$. Since $f_2\ge f_1$ by \eqref{eq:DisugIsoperimetricaAVR}, $f_2$ is concave, and $f_1(0)=f_2(0)=0$, then $f_1\equiv f_2$. Thus $I_X$ is differentiable and $E$ has mean curvature barrier $c = I_X'(\haus^N(E)) = (N-1)(\theta\omega_N)^{\frac{1}{N}} \haus^N(E)^{-\frac1N} = \frac{N-1}{N} \frac{P(E)}{\haus^N(E)}$. The mean curvature barrier's equation \eqref{eq:LaplacianBounds0} implies that $\sup_{x \in E} \dist(x,X\setminus E) \le (N-1)/c$, otherwise the inequality degenerates. Hence we use \autoref{cor:HK} again to get
\[
\haus^N(E) \le  P(E) \int_0^{\frac{N-1}{c}} \left( 1- \frac{c}{N-1}s\right)^{N-1}\de s = P(E) \frac{N-1}{c \, N} = \haus^N(E).
\]
Hence some ball $B_{\frac{N-1}{c}}(x_0)$ is contained in $E$. Since $\haus^N(B_{\frac{N-1}{c}}(x_0))\le \haus^N(E) = \theta \omega_N (\frac{N-1}{c})^N$, Bishop--Gromov \autoref{thm:BishopGromov} implies that $E=B_{\frac{N-1}{c}}(x_0)$, $\haus^N(B_R(x_0)) =\theta\omega_N R^N$ for any $R\ge (N-1)/c$ and rigidity follows.
\end{proof}

Carefully reading the previous proof of \autoref{thm:DisugisoperimetricaAVR}, we notice that, when $X$ is as in the assumptions, if $I_X(\overline{V})=  N(\AVR(X)\omega_N)^{\frac1N} \,(\overline{V})^{\frac{N-1}{N}}$ for some $\overline{V}>0$, then $I_X(V) = N(\AVR(X)\omega_N)^{\frac1N} \,(V)^{\frac{N-1}{N}} $ for any $V$ just by concavity, without needing the existence of an isoperimetric set. By \autoref{thm:DisugisoperimetricaAVR}, on such a space there exists an isoperimetric set if and only if it is a cone.

\begin{question}\label{question:UguaglianzaProfilo}
Does there exist an $\RCD(0,N)$ space $(X,\dist,\haus^N)$ with $\AVR(X)>0$ such that $I_X(\overline{V})=  N(\AVR(X)\omega_N)^{\frac1N} \,(\overline{V})^{\frac{N-1}{N}}$ for some $\overline{V}>0$ (hence, for any $V$) and $X$ is not a metric cone? Does there exist such a space $X$ in the class of smooth manifolds?
\end{question}

By \autoref{thm:AsymptoticMassDecomposition}, a space $X$ as in \autoref{question:UguaglianzaProfilo} has a pmGH limit at infinity isometric to a cone $C$ with $\AVR(C)=\AVR(X)$. Also, by \autoref{thm:DisugisoperimetricaAVR}, in a space $X$ as in \autoref{question:UguaglianzaProfilo} there do not exist isoperimetric sets.\\
Notice that \autoref{question:UguaglianzaProfilo} has a negative answer in the class of manifolds with nonnegative sectional curvature (or, more generally, Alexandrov spaces with nonnegative curvature, or $\RCD(0,N)$ spaces as in \autoref{thm:ABFPexistence} below) by \cite[Theorem 1.2]{APPSb}.

\subsection{Existence for large volumes on spaces with nonnegative Ricci and Euclidean volume growth}

In this section we provide some general existence results for large volumes of isoperimetric sets on nonnegatively curved spaces with Euclidean volume growth.

Here is the key observation. \autoref{thm:BishopGromov} and \autoref{thm:VolumeConvergence} readily imply that $\AVR$ is upper semicontinuous with respect to pmGH convergence of $\RCD(0,N)$ spaces. If we assume that pmGH limits at infinity of an $\RCD(0,N)$ space with $\AVR(X)>0$ have strictly larger $\AVR$ by a fixed gap $\eps>0$, it is possible to exploit the sharpness of the isoperimetric inequality \eqref{eq:DisugIsoperimetricaAVR} for large volumes to prove existence of isoperimetric sets for large volumes. Indeed, applying \autoref{thm:AsymptoticMassDecomposition} on a minimizing sequence of sufficiently large volume, this assumption implies that it is not isoperimetrically convenient to lose mass at infinity. This is the content of the next result.

\begin{theorem}[{\cite[Theorem 1.1, Theorem 1.2]{AntBruFogPoz} \& 
\cite[Theorem 1.2]{APPSb}}]\label{thm:ABFPexistence}
Let $N \in \N$ with $N\ge 2$. Let $(X,\dist,\haus^N)$ be an $\RCD(0,N)$ space with $\AVR(X)>0$. Write $X=\R^k\times X'$ for some $k\in\{0,\ldots,N-1\}$, where $X'$ does not contain lines.\\
Assume that there exists $\eps>0$ such that the following holds. For any sequence $(t_i,x'_i)\in X$ with $x'_i$ diverging along $X'$ such that $(X,\dist, \haus^N,(t_i,x'_i))$ pmGH converges, the pmGH limit $(Y,\dist_Y,\haus^N,y)$ satisfies
\[
\AVR(Y) \ge \AVR(X) + \eps.
\]
Then there exists $V_0$ such that for any $V\ge V_0$ there exists an isoperimetric region of volume $V$ on $X$.
\end{theorem}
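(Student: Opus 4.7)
The plan is to argue by contradiction, combining the asymptotic mass decomposition with the sharp isoperimetric inequality on limits at infinity. Suppose there exists a sequence $V_n\to+\infty$ such that no isoperimetric set of volume $V_n$ exists on $X$. For each $n$, choose a perimeter-minimizing sequence $\{\Omega^n_i\}_{i\in\N}$ of bounded sets of volume $V_n$ (truncating as in \cite[Lemma 2.17]{AFP21}) and apply \autoref{thm:AsymptoticMassDecomposition}; note the noncollapsing hypothesis is automatic since Bishop--Gromov gives $\haus^N(B_1(x))\ge\omega_N\AVR(X)>0$ uniformly in $x$. By \autoref{rem:UnicoPezzo}, strict subadditivity of $I_X$ (a consequence of \autoref{thm:DifferentialInequalitiesProfile}) forces $\overline N_n\in\{0,1\}$; the case $\overline N_n=0$ would produce an isoperimetric set of volume $V_n$ on $X$, contradicting our assumption. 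Hence $\overline N_n=1$ and the whole mass escapes: there are points $p_i^n\in X$ with $\dist(p_i^n,o)\to+\infty$ as $i\to\infty$, a pmGH-limit $(Y_n,\dist_{Y_n},\haus^N,y_n)$ of $(X,\dist,\haus^N,p_i^n)$, and an isoperimetric region $Z_n\subset Y_n$ of volume $V_n$ with
\[
I_X(V_n)=P(Z_n)=I_{Y_n}(V_n).
\]

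Next I would exploit the splitting $X=\R^k\times X'$. Writing $p_i^n=(t_i^n,q_i^n)\in\R^k\times X'$ and using the product metric, from $\dist(p_i^n,o)\to+\infty$ either $q_i^n$ stays bounded in $X'$ along a subsequence, or $\dist_{X'}(q_i^n,o')\to+\infty$ along a subsequence. In the first case, properness of $X'$ yields $q_i^n\to q_\infty^n$ in $X'$ (up to subsequence), and the isometry of $X$ given by translation $(x,y)\mapsto(x-t_i^n,y)$ along the $\R^k$-factor shows that $(X,\dist,\haus^N,p_i^n)$ pmGH-converges to $(X,\dist,\haus^N,(0,q_\infty^n))$; that is, $Y_n$ is isometric to $X$ itself, so $Z_n$ transports to an isoperimetric set of volume $V_n$ on $X$, contradicting our standing assumption. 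Therefore $q_i^n$ must diverge in $X'$, and the hypothesis of the theorem yields
\[
\AVR(Y_n)\ge\AVR(X)+\eps.
\]

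Applying \autoref{thm:DisugisoperimetricaAVR} on $Y_n$ we deduce
\[
I_X(V_n)=I_{Y_n}(V_n)\ge N\bigl((\AVR(X)+\eps)\omega_N\bigr)^{1/N}V_n^{(N-1)/N}.
\]
On the other hand, concavity of $I_X^{N/(N-1)}$ from \autoref{thm:DifferentialInequalitiesProfile}, together with $I_X(0)=0$, makes $V\mapsto I_X(V)/V^{(N-1)/N}$ nonincreasing; taking balls $B_r(o)$ as competitors for $V=\haus^N(B_r(o))\to+\infty$ and using $P(B_r(o))/\haus^N(B_r(o))^{(N-1)/N}\to N(\AVR(X)\omega_N)^{1/N}$ (a consequence of \autoref{thm:BishopGromov}) in combination with the lower bound of \autoref{thm:DisugisoperimetricaAVR} on $X$ yields
\[
\lim_{V\to+\infty}\frac{I_X(V)}{V^{(N-1)/N}}=N(\AVR(X)\omega_N)^{1/N}.
\]
For $n$ sufficiently large this contradicts the previous lower bound, concluding the proof.

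The main technical point will be the case analysis on the base points $p_i^n=(t_i^n,q_i^n)$: rigorously showing that when $q_i^n$ stays bounded the pmGH-limit of $(X,\dist,\haus^N,p_i^n)$ is isometric to $X$ requires combining properness of $X'$ with translation invariance along $\R^k$, and this is precisely why the hypothesis of the theorem is restricted to sequences in which the $X'$-component diverges. The remainder is a clean interplay between the mass decomposition, the sharp isoperimetric inequality on the limit with its quantitative AVR gap, and the asymptotic sharpness of $I_X$ for large volumes.
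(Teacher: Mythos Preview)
Your argument is correct and follows essentially the same route as the paper's proof: asymptotic mass decomposition plus \autoref{rem:UnicoPezzo} to force all mass into a single limit at infinity, then the sharp isoperimetric inequality on that limit combined with the asymptotic sharpness of \eqref{eq:DisugIsoperimetricaAVR} on $X$ for large volumes to obtain a contradiction. The paper simply assumes $k=0$ and quotes sharpness directly, whereas you spell out both the $k>0$ case (via translation invariance along the $\R^k$-factor) and the asymptotic sharpness (via concavity of $I_X^{N/(N-1)}$ and balls as competitors); these additions are accurate and welcome but do not change the underlying strategy.
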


The main assumption in the previous \autoref{thm:ABFPexistence} may appear artificial, though spontaneous, but it is actually strongly related to the geometry of the asymptotic cones of the space and to the stability of isoperimetric sets in such cones.

\begin{definition}\label{def:AsymptoticCone}
Let $N \in \N$ with $N\ge 2$. Let $(X,\dist,\haus^N)$ be an $\RCD(0,N)$ space with $\AVR(X)>0$. Fix $o \in X$ and let $\lambda_i\to+\infty$. Then any pmGH limit, up to subsequence, of $(X,\dist/\lambda_i, \haus^N , o)$ is an \emph{asymptotic cone} (or, \emph{tangent cone at infinity}) of $X$.
\end{definition}

\autoref{thm:Precompactness} and \autoref{thm:BishopGromov} ensure that the previous definition is well-posed. Moreover, an asymptotic cone is, indeed, a metric cone.
Observe that the sharp isoperimetric inequality in \autoref{thm:DisugisoperimetricaAVR} states that the profile of an $\RCD(0,N)$ space $X$ with Euclidean volume growth is bounded below by the one of any asymptotic cone to $X$.

\medskip

In general, asymptotic cones are not unique and they depend on the choice of the sequence $\lambda_i$ in \autoref{def:AsymptoticCone}, see \cite{PerelmanExampleCones, ColdingNaber, ColdingMinicozziConi}. Moreover, asymptotic cones to Riemannian manifolds can clearly be nonsmooth $\RCD(0,N)$ spaces.\\
More dramatically, asymptotic cones to a space $X$ may contain lines even $X$ does not, see \cite[pp. 913-914]{KasueWashio} and \cite[Theorem 1.4]{ColdingNaber}. Actually, \cite[Theorem 1.4]{ColdingNaber} constructs an $N$-dimensional manifold with nonnegative Ricci and $\AVR>0$ having distinct asymptotic cones containing exactly $k$ lines for any $k=0,\ldots,N-2$. On the variational side, recalling \autoref{cor:IsoperimetricaSuConi}, if a cone splits a line then balls are not strictly stable isoperimetric sets.

Let now $X$ be as in \autoref{def:AsymptoticCone}, and assume for simplicity that $X$ does not contain lines. It is not difficult to show that, roughly speaking, no asymptotic cone of $X$ contains a line if and only if balls centered at tips of asymptotic cones of $X$ are uniformly strictly stable, i.e., the Jacobi operator on the cross sections of such cones is uniformly positive definite. Moreover, in the notation of \autoref{thm:ABFPexistence}, it is possible to prove that if no asymptotic cone of $X'$ contains a line then the hypothesis of \autoref{thm:ABFPexistence} holds true \cite[Lemma 4.2]{AntBruFogPoz}.\\
This yields another, seemingly more intrinsic, existence result: if $X=\R^k\times X'$ is $\RCD(0,N)$ with $\AVR(X)>0$ and $X'$ does not contain lines, if no asymptotic cone of $X'$ contains a line (equivalently, isoperimetric sets in asymptotic cones to $X'$ are uniformly strictly stable), then isoperimetric sets on $X$ exist for large volumes.

\medskip

The previous geometric discussion has a significant application to the class of Riemannian manifolds with nonnegative sectional curvature. Indeed, Toponogov Theorem \cite[Theorem 12.2.2]{Petersen2016} implies that any such manifold has a unique asymptotic cone, which contains lines if and only if the manifold does, see \cite{Kasue88, GuijarroKapovitch95, ShiohamaBOOK, MashikoNaganoOtsuka} or \cite[Theorem 4.6]{AntBruFogPoz} for a proof in the $\AVR>0$ case. Therefore assumptions in \autoref{thm:ABFPexistence} are automatically matched in this class and we deduce the following clean existence result.

\begin{theorem}[{\cite[Theorem 1.3]{AntBruFogPoz}}]\label{thm:ExistenceSectional}
Let $N \in \N$ with $N\ge 2$. Let $(M,g)$ be a complete $N$-dimensional Riemannian manifold with nonnegative sectional curvature and $\AVR(M)>0$\footnote{More generally, the theorem holds for an $N$-dimensional Alexandrov space with curvature bounded below by zero, that we do not introduce here, and positive $\AVR$, see \cite{APPSb}.}.\\
Then there exists $V_0$ such that for any $V\ge V_0$ there exists an isoperimetric region of volume $V$ on $X$.
\end{theorem}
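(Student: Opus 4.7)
The plan is to deduce the result as a direct consequence of Theorem \ref{thm:ABFPexistence}, using the very rigid structure of asymptotic cones in the nonnegative sectional curvature setting. First I would split off the Euclidean factors: by the Cheeger--Gromoll splitting theorem, write $M=\R^k\times M'$ where $M'$ is a complete manifold of nonnegative sectional curvature that contains no lines. Since $\AVR$ is unchanged under Euclidean products (modulo $\omega_N$ factors), the assumption $\AVR(M)>0$ passes to $M'$.

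Next, I would verify the single hypothesis required to invoke Theorem \ref{thm:ABFPexistence}: the strict $\AVR$-gap for pmGH limits of $(M,\dist,\haus^N,(t_i,x'_i))$ taken along divergent sequences $x'_i \in M'$. The key input is Toponogov's theorem, which, applied to a nonnegatively curved manifold with $\AVR>0$, implies that $M'$ possesses a \emph{unique} asymptotic cone $C(M')$ (independent of the diverging scale), and that $C(M')$ contains a line if and only if $M'$ does; since $M'$ has no lines, neither does $C(M')$. Moreover, along divergent sequences in $M'$, any pmGH limit is itself an $\RCD(0,N-k)$ cone splitting off the correct Euclidean factor from $M$, and again by Toponogov it must coincide (after multiplication by $\R^k$) with the unique structure dictated by $C(M')$. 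Since $C(M')$ has no lines, the standard argument (\cite[Lemma 4.2]{AntBruFogPoz}) produces the uniform gap $\eps>0$ with $\AVR(Y) \ge \AVR(M) + \eps$ for every such limit $Y$: upper semicontinuity of $\AVR$ under pmGH convergence forces a strict jump, otherwise one could extract, by Bishop--Gromov monotonicity \autoref{thm:BishopGromov} and volume convergence \autoref{thm:VolumeConvergence}, an asymptotic cone of $M'$ containing a line.

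Finally, with this verified, Theorem \ref{thm:ABFPexistence} applies and yields a threshold $V_0$ above which isoperimetric sets exist on $M$ for all volumes $V\ge V_0$.

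The main obstacle is the second step: rigorously upgrading the absence of lines in $M'$ (a statement about the manifold itself) to the absence of lines in every pmGH limit at infinity (a statement about a much wider class of blow-downs, not just rescaled cones). Toponogov's theorem is what controls this, through the fact that in nonnegative sectional curvature the infinitesimal geometry at infinity is essentially determined by a single asymptotic object; without this rigidity — which fails in the nonnegative Ricci setting, cf.\ \cite[Theorem 1.4]{ColdingNaber} — one could have limits at infinity with the same $\AVR$ as $M$ but admitting a line, destroying the strict $\AVR$-gap and hence the existence argument.
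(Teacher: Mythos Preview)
Your approach is correct and essentially identical to the paper's: reduce to \autoref{thm:ABFPexistence} by splitting off Euclidean factors via Cheeger--Gromoll, then use Toponogov's theorem to conclude that the asymptotic cone of $M'$ is unique and contains no lines, so that \cite[Lemma 4.2]{AntBruFogPoz} supplies the required uniform $\AVR$-gap. One inaccuracy worth flagging: pmGH limits of $(M,\dist,\haus^N,(t_i,x'_i))$ along diverging $x'_i$ are \emph{not} cones in general---they are limits at fixed scale, not blow-downs---so your assertion that each such limit ``must coincide (after multiplication by $\R^k$) with the unique structure dictated by $C(M')$'' is false; fortunately this claim is not load-bearing, since you then correctly defer to \cite[Lemma 4.2]{AntBruFogPoz} for the actual gap.
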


We mention that the somehow dual setting with respect to the one in \autoref{thm:ExistenceSectional} - namely, the one with minimal volume growth assumption - was recently considered in \cite{AntonelliPozzettaAlexandrov}, obtaining the same result of existence for large volumes.

Anyway, the following general question remains open.

\begin{question}\label{question:Existence0N}
Let $N \in \N$ with $N\ge 3$\footnote{The case $N=2$ has a positive answer by \cite{AntonelliPozzettaAlexandrov}, after \cite{RitoreExistenceSurfaces01}.}. Let $(X,\dist,\haus^N)$ be a noncompact $\RCD(0,N)$ space such that $\inf_{x \in X} \haus^N(B_1(x))>0$. Do isoperimetric sets exist for any volume? Up to the author's knowledge, without additional assumptions, the question is also open in the class of manifolds with nonnegative sectional curvature and even in the one of convex bodies in the Euclidean space.
\end{question}

We stress that, as nonexistence of isoperimetric sets is basically equivalent to the drifting towards infinity of minimizing sequences for the problem, \autoref{question:Existence0N} asks for a better understanding the structure at infinity of manifolds with nonnegative curvature.

\medskip

Exploiting the sharp differential inequalities of the profile, the proof of \autoref{thm:ABFPexistence} extremely simplifies as follows. The next straightforward argument was also observed by G. Antonelli, E. Bruè and M. Fogagnolo.

\begin{proof}[Proof of \autoref{thm:ABFPexistence}]
Assume for simplicity that $k=0$. Recalling that \eqref{eq:DisugIsoperimetricaAVR} is sharp for large volumes, we can choose $V_0>0$ such that $I_X(V)\le N((\AVR(X)+\eps/2)\omega_N)^{\frac1N}V^{\frac{N-1}{N}}$ for any $V\ge V_0$. Fix $V\ge V_0$ and assume by contradiction that there does not exist isoperimetric regions of volume $V$ on $X$. Apply the asymptotic mass decomposition \autoref{thm:AsymptoticMassDecomposition} for such volume $V$. By \autoref{rem:UnicoPezzo}, there is a set $E \subset Y$ with $P(E)=I_X(\haus^N(E))$ and measure $V$ in a pmGH limit at infinity $(Y,\dist_Y,\haus^N)$ of $X$. But then \autoref{thm:DisugisoperimetricaAVR} and the assumptions imply
\[
N((\AVR(X)+\eps)\omega_N)^{\frac1N}V^{\frac{N-1}{N}} \le I_Y(V) \le P(E) =I_X(V) \le
N((\AVR(X)+\eps/2)\omega_N)^{\frac1N}V^{\frac{N-1}{N}},
\]
yielding a contradiction.
\end{proof}

Apart from the problem of existence like the one in \autoref{question:Existence0N}, we mention that the characterization of \emph{qualitative} properties of isoperimetric sets still is an open problem, at least in the generality considered in this section. For instance, we record the following question.

\begin{question}
    Let $N \in \N$ with $N\ge 2$. Let $(M,g)$ be a noncompact $N$-dimensional manifold with nonnegative sectional curvature such that $\inf_{x \in M} \haus^N(B_1(x))>0$. Are isoperimetric sets of sufficiently large volume convex? What if $\AVR(M)>0$?
\end{question}

Related to the recent \cite{NiuNonsmoothIsoperimetrico}, we mention also the following

\begin{question}
    Let $N \in \N$ with $N\ge 8$. Is it possible to construct an example of a noncompact $N$-dimensional manifold $(M,g)$ with $\Ric\ge 0$, $\AVR(M)>0$ and such that there exist isoperimetric sets of arbitrarily large volumes with nonempty singular set, i.e., such that there exist points in the topological boundary such that the blowup of the set at such points is a not a halfspace?
\end{question}

\subsection{L\'{e}vy--Gromov isoperimetric inequality}\label{sec:LevyGromov}

As a final application of the differential inequalities of the isoperimetric profile, we present a proof of the classical L\'{e}vy--Gromov isoperimetric inequality in the noncollapsed $\RCD$ setting. The fundamental ideas in the proof we include are due to \cite[Théorème 2.4.3]{Bayle03}. We provide a little simplification concerning the rigidity part, which was, in turn, based on the rigidity in \cite[Théorème 6.14]{Gallotast}. The L\'{e}vy--Gromov inequality was firstly proved in the $\RCD$ setting in \cite{CavallettiMondinoIsopIneq17}.

\begin{theorem}[L\'{e}vy--Gromov isoperimetric inequality]\label{thm:LevyGromov}
	Let $N \in \N$ with $N\ge 2$. Let $(X,\dist,\haus^N)$ be an $\RCD(N-1,N)$ space. Then
	\[
	\frac{I_X(t\haus^N(X))}{\haus^N(X)} \ge \frac{I_{\S^N}(t\haus^N(\S^N))}{\haus^N(\S^N)},
	\]
	for any $t\in[0,1]$. Moreover equality holds for some $t \in (0,1)$ if and only if $X$ is isometric to the spherical suspension over an $\RCD(N-2,N-1)$ space. In particular, if $X$ is a Riemannian manifold, equality holds for some $t \in (0,1)$ if and only if $X$ is isometric to $\S^N$.
\end{theorem}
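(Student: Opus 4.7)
The plan is to reduce the inequality to a one-dimensional ODE comparison via \autoref{thm:DifferentialInequalitiesProfile} specialized to $K=N-1$, and to extract the rigidity from a strong maximum principle combined with \autoref{thm:MeanCurvatureBarriers} and the maximal-diameter case of \autoref{thm:BonnetMyers}. By \autoref{thm:BonnetMyers}, $(X,\dist)$ is compact with $\diam(X)\le\pi$, so \autoref{thm:BishopGromov} applied with $R=\pi$ yields $v \eqdef \haus^N(X) \le \haus^N(\S^N) \eqdef v_0$. I introduce the normalized profiles
\[
\Phi(t) \eqdef \left(\frac{I_X(tv)}{v}\right)^{N/(N-1)}, \qquad \Phi_0(t) \eqdef \left(\frac{I_{\S^N}(tv_0)}{v_0}\right)^{N/(N-1)}, \qquad t\in[0,1],
\]
both continuous, concave, symmetric about $t=1/2$, vanishing at the endpoints, and strictly positive on $(0,1)$. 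With $\psi_X\eqdef I_X^{N/(N-1)}$, so that $\Phi(t)=\psi_X(tv)/v^{N/(N-1)}$, a chain rule computation shows that the $v$-dependent scaling factors cancel and \autoref{thm:DifferentialInequalitiesProfile} becomes
\[
\Phi''\le -N\,\Phi^{(2-N)/N}\qquad\text{on }(0,1)
\]
in the distributional sense, while the profile of $\S^N$ saturates the bound so that $\Phi_0$ solves the same relation with equality. The L\'{e}vy--Gromov inequality is then equivalent to $\Phi\ge\Phi_0$ on $[0,1]$.

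For the comparison, observe that the nonlinearity $g(u)=-Nu^{(2-N)/N}$ is constant if $N=2$ and strictly increasing on $(0,+\infty)$ if $N>2$. Setting $w\eqdef\Phi-\Phi_0$, one has $w(0)=w(1)=0$ and $w''\le g(\Phi)-g(\Phi_0)$ in the viscosity sense. If $w$ attained a negative interior minimum at some $\bar t\in(0,1)$, then $w''(\bar t)\ge 0$ while $\Phi(\bar t)<\Phi_0(\bar t)$ and monotonicity of $g$ force $g(\Phi(\bar t))-g(\Phi_0(\bar t))\le 0$, strictly for $N>2$, a contradiction; for $N=2$ the inequality $w''\le 0$ with vanishing boundary values directly yields $w\ge 0$. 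Undoing the normalization produces the desired inequality.

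For the rigidity, if $\Phi(\bar t)=\Phi_0(\bar t)$ for some $\bar t\in(0,1)$, writing $g(\Phi)-g(\Phi_0)=g'(\xi)w$ with $g'(\xi)\ge 0$ and applying a strong maximum principle to $w''\le g'(\xi)w$, with $w\ge 0$ having an interior zero, gives $w\equiv 0$, so $\Phi\equiv\Phi_0$; smoothness of $\Phi_0$ transfers to $\Phi$, and $I_X$ becomes smooth on $(0,v)$ with $I_X'(v/2)=0$ by symmetry. Since $X$ is compact there exists an isoperimetric set $E\subset X$ of volume $v/2$, and by \autoref{rem:DerivataProfiloBarriera} its mean curvature barrier equals $I_X'(v/2)=0$. \autoref{thm:MeanCurvatureBarriers} with $K=N-1$ and $c=0$ then gives Laplacian bounds on the signed distance from $E$ modeled on the distance from the equator of $\S^N$; integrating these via a Gauss--Green formula yields the sharp spherical Heintze--Karcher estimate $\haus^N(E)\le P(E)\int_0^{\pi/2}\cos^{N-1}(s)\de s = v/2$, attained with equality both inside $E$ and inside $X\setminus\overline E$. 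This forces the inradii of $E$ and of $X\setminus\overline E$ to equal $\pi/2$, so that $\diam(X)=\pi$, and the rigidity of \autoref{thm:BonnetMyers} produces the spherical suspension structure, which reduces to $X\cong\S^N$ in the Riemannian case.

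The main obstacle is the execution of the ODE comparison at the weak regularity provided by \autoref{thm:DifferentialInequalitiesProfile} and, especially, upgrading it to the strong form required for rigidity despite the singular behavior of $g$ at the endpoints, where both $\Phi$ and $\Phi_0$ vanish. Extracting the geometric rigidity from the analytic equality $\Phi\equiv\Phi_0$ via \autoref{thm:MeanCurvatureBarriers} and the maximal-diameter case of \autoref{thm:BonnetMyers}, bypassing the $1$-dimensional localization used in previous proofs, is the promised simplification over Gallot's rigidity argument.
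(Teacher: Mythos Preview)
Your proposal is correct and follows the same architecture as the paper: both reduce the inequality to the one–dimensional comparison $\Phi''\le -N\Phi^{(2-N)/N}$ versus $\Phi_0''=-N\Phi_0^{(2-N)/N}$ for the normalized $\tfrac{N}{N-1}$–powers, then extract rigidity by showing the half–volume isoperimetric set has barrier $c=0$, feeding this into the Heintze--Karcher estimate to force $\diam X=\pi$ and invoke \autoref{thm:BonnetMyers}.

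The one substantive difference is in propagating the equality $\Phi(\bar t)=\Phi_0(\bar t)$ to $t=1/2$. You invoke a strong maximum principle for $w''\le g'(\xi)\,w$ with $w\ge 0$ and an interior zero; this is legitimate in one dimension (e.g.\ divide by $\cosh(\sqrt{M}(t-\bar t))$ on compact subintervals where $g'(\xi)\le M$ and observe the quotient is monotone on each side of $\bar t$), and the endpoint singularity of $g'$ is harmless since the argument is local on $(0,1)$. The paper instead introduces the auxiliary one–parameter family $\varphi_v(t)=I_{\S^N}(2vt)/(2v)$, checks $f_v=\varphi_v^{N/(N-1)}$ is a strict super-solution on $(0,1/2]$, and uses item (1) of \autoref{lem:Comparison} to squeeze $f_2$ between $f_1$ and $f_v$, then lets $v\to \haus^N(\S^N)/2$. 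Your route is shorter once the SMP is justified; the paper's is entirely self-contained with the stated comparison lemma (which only yields matching one–sided derivatives at a touching point, not directly $w\equiv 0$). Your final Heintze--Karcher bookkeeping, forcing both inradii to be $\pi/2$, is a minor rearrangement of the paper's argument with $F(d),G(d)$ and leads to the same conclusion.
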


The original proof of the inequality on Riemannian manifolds is contained in \cite{GromovLevyGromovOriginale}, see also \cite[Appendix I]{MilmanSchechtmanGromov} and \cite[Appendix C]{Gromovmetric}. The L\'{e}vy--Gromov inequality has been greatly generalized in \cite{Milmanmodels} to weighted manifolds with a possibly negative lower bound on the generalized Ricci curvature and an upper bound on the diameter. The previous result was then extended to the $\RCD$ setting in \cite{CavallettiMondinoIsopIneq17}. Quantitative versions of the L\'{e}vy--Gromov inequality have been obtained in \cite{BerardBessonGallotiso}, which was also recovered in \cite[Lemma 3.1]{CavallettiMondinoSemolaQuantitativeObata}, and in \cite{CavallettiMaggiMondino} at the level of $\CD$ spaces.

\begin{proof}[Proof of \autoref{thm:LevyGromov}]
Let
\[
f_1(t) \eqdef \big(I_{\S^N}(t\haus^N(\S^N))/\haus^N(\S^N)\big)^{\frac{N}{N-1}}, \qquad f_2(t) \eqdef \big(I_X(t\haus^N(X))/\haus^N(X)\big)^{\frac{N}{N-1}}.
\]
By \autoref{thm:DifferentialInequalitiesProfile}, one readily checks that $\overline{D}^2 f_2(t) \le -N f_2^{\frac{2-N}{N}}$ for any $t \in (0,1)$, while $f_1''(t)=-N f_1^{\frac{2-N}{N}}$ on $(0,1)$. So we can apply \autoref{lem:Comparison} with $g(y)=-Ny^{\frac{2-N}{N}}$ to get that $f_2\ge f_1$ on $[0,1]$, which proves the L\'{e}vy--Gromov inequality.

Now suppose there is $t_0\in(0,1/2]$ such that $f_1(t_0)=f_2(t_0)$. As in \cite[Théorème 2.4.3]{Bayle03}, if $t_0<1/2$, we want to show that $f_1(1/2)=f_2(1/2)$ by applying \autoref{lem:Comparison} to $f_2$ and suitably defined comparison functions. Let $v \in (0,\haus^N(\S^N)/2)$ and define the auxiliary function $\varphi_{v}:[0,1/2]\to \R$ by
\[
\varphi_{v}(t)\eqdef \frac{I_{\S^N}(2vt)}{2v}.
\]
The functions $\varphi_v$ are equivalent to the model functions introduced in \cite[p. 72]{Gallotast}. Let also denote $f_{v}\eqdef \varphi_{v}^{\frac{N}{N-1}}$. By strict concavity of $I_{\S^N}$ we have
\[
\varphi_v(t) = t \frac{I_{\S^N}(2vt)}{2vt} > t \frac{I_{\S^N}(t\haus^N(\S^N))}{t\haus^N(\S^N)} = \frac{I_{\S^N}(t\haus^N(\S^N))}{\haus^N(\S^N)},
\]
on $(0,1/2]$, and $\varphi_v'(t) = I_{\S^N}'(2vt)>I_{\S^N}'(t\haus^N(\S^N))$ on $(0,1/2]$. Hence
\begin{equation*}
    f_v(t) > f_1(t) ,
    \qquad
    f_v'(t) > f_1'(t),
\end{equation*}
on $(0,1/2]$. By \autoref{lem:Comparison}, since $f_1(t_0)=f_2(t_0)$, then there exists the derivative $f_2'(t_0)= f_1'(t_0)$. Assuming $t_0 \in (0,1/2)$, let $f_3:[t_0,1/2]\to \R$ be defined by
\[
f_3(t) \eqdef f_v(t) - f_v(t_0) + f_2(t_0).
\]
It is immediate to check that $f_3''(t)=-N f_v(t)^{\frac{2-N}{N}} \ge -N f_3(t)^{\frac{2-N}{N}}$ for $t \in (t_0,1/2)$. Moreover $f_3(t_0)=f_2(t_0)$ and $f_3'(t_0) >f_1'(t_0)=f_2'(t_0)$. Hence there are $t_i \to t_0^+$ such that $f_3(t_i)>f_2(t_i)$. Hence item (1) in \autoref{lem:Comparison} yields that $f_3(t) > f_2(t)$ for any $t \in(t_0,1/2]$. Letting $v\to \haus^N(\S^N)/2$ we deduce
\[
f_1(t) \le f_2(t) \le \lim_{v\to \haus^N(\S^N)/2}  f_v(t) - f_v(t_0) + f_2(t_0) = f_1(t)  - f_1(t_0) + f_2(t_0)  = f_1(t),
\]
for any $t \in (t_0,1/2]$. In particular $f_1(1/2)=f_2(1/2)$. Hence \autoref{lem:Comparison} implies that there exists the derivative $f_2'(1/2)=f_1'(1/2)$, which implies $\exists\, I_X'(\haus^N(X)/2)=0$. Let $E\subset X$ be an isoperimetric set of volume $\haus^N(E)=\haus^N(X)/2$, which exists by direct method and compactness of $X$, see \autoref{thm:BonnetMyers}. By \autoref{rem:DerivataProfiloBarriera}, $E$ has mean curvature barrier $c=0$.\\
Now the claimed rigidity follows by arguments analogous to ones in the proof of \cite[Théorème 6.14]{Gallotast}, with a slight simplification which exploits the vanishing of the mean curvature barrier of $E$. On an $\RCD(N-1,N)$ space, for an isoperimetric set $E$ with zero mean curvature barrier, analogous Heintze--Karcher-type estimates as the ones in \autoref{cor:HK}, again obtained by integration and Gauss--Green \cite[Theorem 1.6]{BPSGaussGreen}, read
\[
|\haus^N(E_r)- \haus^N(E) | \le P(E) \int_0^{|r|} (\cos s)_+^{N-1} \de s,
\]
where $E_r\eqdef \{ x \st \dist_E^s(x)<r\}$, for $\dist_E^s$ as in \autoref{def:SignedDistance}, and $(\cdot)_+$ denotes positive part, for $r \in\R$. Denoting $D\eqdef \diam(X)$ and $d\eqdef \sup_{x \in E} \dist(x,X\setminus E)$, we have that $\sup_{y \in X\setminus E} \dist(y, E) \le D-d$; indeed, if $y\in X\setminus E$ satisfies $\dist(y,E)=\sup_{X\setminus E} \dist(\cdot, E)$, letting $x \in E$ such that $\dist(x,X\setminus E)= \sup_{E} \dist(\cdot, X\setminus E)$, we can take a geodesic $\gamma$ from $x$ to $y$, which thus intersects $\partial E$ at some point $z$, so that
\[
D \ge \dist(x,y) = \dist(x,z) + \dist(z,y) \ge \dist(x,X\setminus E) + \dist (y,E)= d + \sup_{X\setminus E} \dist(\cdot, E).
\]
Therefore we can estimate
\[
\begin{split}
    \frac{\haus^N(X)}{2} &= \haus^N(E) \le P(E) \int_0^d  (\cos s)_+^{N-1} \de s \eqqcolon P(E)\, F(d),\\
    \frac{\haus^N(X)}{2} &= \haus^N(X\setminus E) \le P(E) \int_0^{D-d}  (\cos s)_+^{N-1} \de s \eqqcolon P(E)\,G(d).
\end{split}
\]
Hence $\frac{P(E)}{\haus^N(E)} \ge \left(\min\{F(d),G(d)  \} \right)^{-1}$. Since $d\mapsto F(d)$ is nondecreasing and $d\mapsto G(d)$ is nonincreasing, then $\min\{F(d),G(d)  \} \le F(\overline{d})$ where $\overline{d}\in(0,D)$ satisfies $F(\overline{d})=G(\overline{d})$. If $\overline{d}\le D/2$, then $F(\overline{d})\le F(D/2)$. If $\overline{d}>D/2$, then $G(\overline{d}) \le G(D/2)=F(D/2)$. Hence in any case  $\min\{F(d),G(d)  \} \le F(\overline{d}) \le \int_0^{D/2}  (\cos s)_+^{N-1} \de s$. Recalling that $f_1(1/2)=f_2(1/2)$, we have
\[
\begin{split}
\left( \int_0^{\frac\pi2}  (\cos s)_+^{N-1} \de s \right)^{-1} &=
\frac{I_{\S^N}(\haus^N(\S^N)/2)}{\haus^N(\S^N)/2}=\frac{I_X(\haus^N(X)/2)}{\haus^N(X)/2}=\frac{P(E)}{\haus^N(E)} \\
&\ge \left(\min\{F(d),G(d)  \} \right)^{-1} \ge \left( \int_0^{\frac{D}{2}}  (\cos s)_+^{N-1} \de s \right)^{-1}.
\end{split}
\]
Recalling the Bonnet--Myers \autoref{thm:BonnetMyers}, we deduce that $D=\pi$, and rigidity follows.
\end{proof}

\appendix

\section{Concave functions, second order differential inequalities and comparison}\label{sec:Appendix}

For a function $f$ defined in a neighborhood of some $x\in \R$ we set
\[
\begin{split}
\overline{D}^2f(x) & \eqdef \limsup_{h\to 0^+} \frac{f(x+h)+f(x-h)-2f(x)}{h^2},\\
\underline{D}^2f(x) &\eqdef \liminf_{h\to 0^+} \frac{f(x+h)+f(x-h)-2f(x)}{h^2},
\end{split}
\]
and
\[
\d_h^2f(x) \eqdef \frac{f(x+h)+f(x-h)-2f(x)}{h^2},
\]
for $h>0$, if $x+h,x-h$ lie in the domain of $f$.
Below we denote by ${\rm int}\, I$ the interior of a set $I\subset \R$. 

\begin{lemma}\label{lem:Concavity}
	Let $I\subset \R$ be an interval and let $f:I\to \R$ be a continuous function. Then the following are equivalent.
	\begin{enumerate}
		\item $f$ is concave.
		\item $\d_h^2f(x)\le 0$ for any $h>0$ and $x \in I$ such that $x+h, x-h \in I$.
		\item $\overline{D}^2f(x)\le0$ for any $x \in {\rm int}\,I$.
		\item $\underline{D}^2f(x)\le0$ for any $x \in {\rm int}\,I$.
	\end{enumerate}
\end{lemma}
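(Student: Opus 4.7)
The plan is to prove the cyclic chain of implications $(1) \Rightarrow (2) \Rightarrow (3) \Rightarrow (4) \Rightarrow (1)$, with only the last implication requiring real work.

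For $(1) \Rightarrow (2)$, concavity implies in particular midpoint concavity $f(x) \ge \tfrac12(f(x+h)+f(x-h))$, which is exactly $\d_h^2 f(x) \le 0$. The implication $(2) \Rightarrow (3)$ is immediate since $\overline{D}^2 f$ is the $\limsup$ of quantities that are all $\le 0$, and $(3) \Rightarrow (4)$ follows from $\underline{D}^2 \le \overline{D}^2$.

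The main step is $(4) \Rightarrow (1)$, and I would argue by contradiction. Suppose $f$ is not concave: then there exist $a < b$ in $I$ and $c \in (a,b)$ with $f(c) < L(c)$, where $L$ is the affine chord determined by $L(a) = f(a)$, $L(b) = f(b)$. Set $g \eqdef f - L$, so that $g(a)=g(b)=0$, $g(c)<0$, and (since $L$ is affine) $\underline{D}^2 g = \underline{D}^2 f \le 0$ on $(a,b)$. For $\eps > 0$ small, I would introduce the strictly concave perturbation
\[
g_\eps(x) \eqdef g(x) + \eps(x-a)(b-x).
\]
Then $g_\eps(a)=g_\eps(b)=0$ and $g_\eps(c)<0$ for $\eps$ small. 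Since the perturbing term is smooth with second derivative $-2$, one has $\d_h^2 g_\eps(x) = \d_h^2 g(x) - 2\eps + o(1)$ as $h \to 0^+$, hence
\[
\underline{D}^2 g_\eps(x) = \underline{D}^2 g(x) - 2\eps \le -2\eps < 0 \qquad \text{on }(a,b).
\]
By continuity, $g_\eps$ attains its minimum on $[a,b]$ at some $x_\eps$, and since $g_\eps(c)<0=g_\eps(a)=g_\eps(b)$, necessarily $x_\eps \in (a,b)$. At such an interior minimizer, $g_\eps(x_\eps \pm h) \ge g_\eps(x_\eps)$ for all sufficiently small $h>0$, so $\d_h^2 g_\eps(x_\eps)\ge 0$ and therefore $\underline{D}^2 g_\eps(x_\eps) \ge 0$, contradicting the bound $\le -2\eps$.

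The hard part will be choosing the perturbation correctly so that the (merely one-sided $\liminf$) assumption $(4)$ converts, at an interior minimum of the perturbed function, into a strict quantitative contradiction; the endpoint-vanishing quadratic $\eps(x-a)(b-x)$ is the natural choice because it preserves the boundary values $g(a)=g(b)=0$ while shifting $\underline{D}^2$ by a definite negative amount. Everything else is essentially bookkeeping with the definitions of $\d_h^2$, $\overline{D}^2$, and $\underline{D}^2$.
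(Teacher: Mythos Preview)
Your proof is correct and follows essentially the same route as the paper's: both run the cycle $(1)\Rightarrow(2)\Rightarrow(3)\Rightarrow(4)\Rightarrow(1)$, and for the nontrivial step $(4)\Rightarrow(1)$ both subtract an affine function and perturb by a quadratic to force a strict interior minimum, at which $\underline{D}^2\ge 0$ yields a contradiction. The only cosmetic difference is that you add $\eps(x-a)(b-x)$ after subtracting the chord (so boundary values are preserved exactly), while the paper subtracts $\eps x^2$ first and then the chord; since these differ by an affine function the arguments are the same, and incidentally your $o(1)$ is actually identically zero because the perturbation is an exact quadratic.
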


\begin{proof}
Implications (2)$\Rightarrow$(3) and (3)$\Rightarrow$(4) are obvious. For the implication (1)$\Rightarrow$(2): by concavity we have $f(x) = f((x+h)/2 + (x-h)/2) \ge (f(x+h)+f(x-h))/2$. Let us now prove that (4)$\Rightarrow$(1). Suppose by contradiction that there exist $a<b$ in $I$, $\eta>0$ and $\lambda \in (0,1)$ such that $f((1-\lambda)a+\lambda b) < (1-\lambda)f(a) + \lambda f(b) - \eta$. Hence there is $\eps>0$ such that $f_\eps(x) \eqdef f(x) -\eps x^2$ satisfies $f_\eps((1-\lambda)a+\lambda b) < (1-\lambda)f_\eps(a) + \lambda f_\eps(b)$. If $\ell:\R\to\R$ is the function parametrizing the line through $(a,f_\eps(a))$ and $(b,f_\eps(b))$, the function $g_\eps\eqdef f_\eps - \ell:[a,b]\to\R$ has a minimum at some $x_0$ in the interior $(a,b)$. Hence
\[
-2\eps \overset{(4)}{\ge} \underline{D}^2 g_\eps (x_0) =\liminf_{h\to0^+} \frac{1}{h^2}( g_\eps (x_0+h) + g_\eps (x_0-h) - 2 g_\eps (x_0)) \ge 0,
\]
by minimality at $x_0$, which gives a contradiction.
\end{proof}

The next corollary was observed also in \cite[Sect. B.3.1]{Bayle03}.

\begin{corollary}\label{cor:EquivDoverDunder}
	Let $I\subset \R$ be an open interval and let $f,F:I\to \R$ be continuous functions. Then the following are equivalent.
	\begin{enumerate}
		\item $\overline{D}^2f(x) \le F(x)$ for any $x \in I$.
		\item $\underline{D}^2f(x) \le F(x)$ for any $x \in I$.
		\item $\d_h^2f(x)\le \sup_{y\in [x-h,x+h]} F(y)$ for any $x\in I$ and $h>0$ such that $x+h,x-h \in I$.
	\end{enumerate}
\end{corollary}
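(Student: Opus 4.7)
\textbf{Proof proposal for \autoref{cor:EquivDoverDunder}.} The implication (1)$\Rightarrow$(2) is immediate since $\underline{D}^2 f(x)\le \overline{D}^2 f(x)$ always holds. For (3)$\Rightarrow$(1), I would fix $x\in I$ and take the $\limsup$ as $h\to 0^+$ of the inequality $\d_h^2 f(x)\le \sup_{y\in[x-h,x+h]}F(y)$: the left-hand side tends (along a subsequence) to $\overline{D}^2 f(x)$ by definition, while continuity of $F$ at $x$ guarantees that $\sup_{[x-h,x+h]}F\to F(x)$. Hence $\overline{D}^2 f(x)\le F(x)$.

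The substantive direction is (2)$\Rightarrow$(3), and the plan is to reduce it to the concavity characterization established in \autoref{lem:Concavity} by subtracting a quadratic. Fix $x_0\in I$ and $h>0$ with $[x_0-h,x_0+h]\subset I$, and set
\[
M \eqdef \sup_{y\in[x_0-h,x_0+h]} F(y),\qquad g(x)\eqdef f(x)-\tfrac{M}{2}x^2.
\]
Since $\d_\delta^2(\tfrac{M}{2}x^2)\equiv M$ for every admissible $\delta>0$, passing to the $\liminf$ as $\delta\to 0^+$ gives $\underline{D}^2 g(x) = \underline{D}^2 f(x) - M$ for every $x\in(x_0-h,x_0+h)$. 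Using the hypothesis (2) and the definition of $M$, we obtain $\underline{D}^2 g(x)\le F(x)-M\le 0$ throughout $(x_0-h,x_0+h)$.

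By \autoref{lem:Concavity} applied on the closed interval $[x_0-h,x_0+h]$ (the lemma covers general intervals, and $g$ is continuous there), we conclude that $g$ is concave on $[x_0-h,x_0+h]$, hence $\d_h^2 g(x_0)\le 0$ by item (2) of that same lemma. Expanding using $\d_h^2(\tfrac{M}{2}x^2)=M$ gives
\[
\d_h^2 f(x_0) - M = \d_h^2 g(x_0)\le 0,
\]
which is exactly $\d_h^2 f(x_0)\le \sup_{[x_0-h,x_0+h]} F$, proving (3). I expect no real obstacle beyond spotting the quadratic-subtraction trick: once one observes that adding a pure quadratic shifts $\underline{D}^2$, $\overline{D}^2$ and $\d_h^2$ by the same constant, the whole equivalence collapses to the concavity characterization already proved in \autoref{lem:Concavity}.
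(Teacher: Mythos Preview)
Your proof is correct and uses the same quadratic-subtraction idea as the paper's own argument, reducing (2)$\Rightarrow$(3) to \autoref{lem:Concavity}. The paper differs only cosmetically, working on a slightly larger interval $[x_0-h',x_0+h']$ with $h'>h$ and then letting $h'\searrow h$; your direct application on $[x_0-h,x_0+h]$ is cleaner and works because item~(2) of \autoref{lem:Concavity} permits the endpoints.
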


\begin{proof}
Implications (1)$\Rightarrow$(2) and (3)$\Rightarrow$(1) are obvious. Let us prove (2)$\Rightarrow$(3). Fix $x_0\in I$ and let $0<h<h'$ such that $(x_0-h',x_0+h')\Subset I$. Let $S\eqdef\sup_{[x_0-h',x_0+h']} F$ and $f_{h'}(x)\eqdef f(x) - Sx^2/2$. Hence $\underline{D}^2 f_{h'}(x)\le F(x) - S \le 0$ for any $x \in [x_0-h',x_0+h']$. By \autoref{lem:Concavity} we deduce that
\[
\d^2_h f(x_0) - \sup_{[x_0-h',x_0+h']} F =\d^2_h f_{h'}(x_0) \le 0.
\]
Letting $h'\searrow h$, (3) follows.
\end{proof}

The next proposition recalls that viscosity, distributional and pointwise formulation of differential inequalities like the one satisfied by the isoperimetric profile are all equivalent. We include a short proof for the convenience of the reader.

\begin{proposition}\label{prop:EquivalenceDifferentialInequalities}
	Let $I\subset \R$ be an open interval and let $f:I\to \R$, $g:{\rm Im}\,(f)\to\R$ be continuous functions. Suppose that $f$ is bounded from below. Then the following are equivalent.
	\begin{enumerate}
		\item $f''\le g(f)$ in the viscosity sense on $I$, i.e., for any $x \in I$ and any smooth function $\varphi$ defined in a neighborhood of $x$ such that $\varphi - f$ has a local maximum at $x$, there holds $\varphi''(x)\le g(f(x))$.
		\item $f''\le g(f)$ in the sense of distributions on $I$, i.e.,
		\[
		\int f\varphi'' \de x \le \int g(f) \, \varphi \de x,
		\]
		for every $\varphi \in C^\infty_c(I)$ with $\varphi\ge 0$.
		\item $\overline{D}^2f(x) \le g(f(x))$ for every $x \in I$.
	\end{enumerate}
\end{proposition}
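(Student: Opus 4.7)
The plan is to prove $(3)\Leftrightarrow(2)$ by reducing both formulations to the concavity of a corrected function (applying \autoref{lem:Concavity}), then prove $(3)\Rightarrow(1)$ by a direct second-difference computation, and finally prove $(1)\Rightarrow(3)$, which I expect to be the main obstacle, by a contradiction based on inserting explicit quadratic test functions tangent to $f$ from below.

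For $(3)\Leftrightarrow(2)$, I would fix $x_*\in I$ and set $G(x)\eqdef \int_{x_*}^x (x-t)\,g(f(t))\,\d t$, so $G\in C^2(I)$ with $G''=g(f)$. Let $h\eqdef f-G$. Since $G$ is $C^2$, one checks directly from the definitions that $\overline{D}^2 h(x)=\overline{D}^2 f(x)-g(f(x))$ pointwise, and $h''=f''-g(f)$ in the sense of distributions. Hence (3) is equivalent to $\overline{D}^2 h\le 0$ on $I$, while (2) is equivalent to $h''\le 0$ distributionally. By \autoref{lem:Concavity} the first is equivalent to $h$ being concave; for a continuous function, concavity is also equivalent to $h''\le 0$ distributionally, via the standard mollification argument (the smoothing $h_\eps\eqdef h*\rho_\eps$ satisfies $(h_\eps)''=h''*\rho_\eps\le 0$ in the classical sense, hence is concave, and $h_\eps\to h$ locally uniformly gives concavity of $h$; the converse is immediate on smooth concave functions and extends by approximation).

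For $(3)\Rightarrow(1)$, given smooth $\varphi$ defined near $x_0$ with $\varphi-f$ attaining a local maximum at $x_0$, I would subtract the constant $(\varphi-f)(x_0)$ to assume $\varphi\le f$ near $x_0$ with equality at $x_0$. Then $\d_h^2 f(x_0)\ge \d_h^2 \varphi(x_0)$ for every small $h>0$, and passing to the $\limsup$ yields $\overline{D}^2 f(x_0)\ge \varphi''(x_0)$; combining with (3) gives $\varphi''(x_0)\le g(f(x_0))$.

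The main obstacle will be $(1)\Rightarrow(3)$, since here one is only allowed test functions tangent to $f$ from below. I would argue by contradiction: suppose $\overline{D}^2 f(x_0)>g(f(x_0))+3\eta$ for some $\eta>0$. By continuity of $g\circ f$, choose $\delta>0$ such that $g(f(y))<g(f(x_0))+\eta$ on $(x_0-\delta,x_0+\delta)\subset I$, and set $c\eqdef g(f(x_0))+2\eta$ together with $w(x)\eqdef f(x)-\tfrac{c}{2}(x-x_0)^2$. Then $\overline{D}^2 w(x_0)>\eta$, so there exist $h_n\to 0^+$ with $h_n<\delta$ for which the chord $\ell_n$ of $w$ across $[x_0-h_n,x_0+h_n]$ satisfies $\ell_n(x_0)>w(x_0)$. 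Consequently $\ell_n-w$ vanishes at both endpoints and is strictly positive at $x_0$, so it attains a strictly positive interior maximum at some $y_n\in(x_0-h_n,x_0+h_n)$. The quadratic function $\varphi_n(x)\eqdef\ell_n(x)+\tfrac{c}{2}(x-x_0)^2-(\ell_n-w)(y_n)$ has $\varphi_n''\equiv c$, and $\varphi_n-f=(\ell_n-w)-(\ell_n-w)(y_n)$ has a local maximum at $y_n$. Applying (1) at $y_n$ yields $c\le g(f(y_n))$; but $y_n\to x_0$ forces $g(f(y_n))<g(f(x_0))+\eta<c$ for $n$ large, a contradiction.
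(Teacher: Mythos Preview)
Your proof is correct and takes a genuinely different route from the paper's. The paper argues in a cycle $(1)\Rightarrow(2)\Rightarrow(3)\Rightarrow(1)$: the step $(1)\Rightarrow(2)$ is the heaviest and uses inf-convolution $f_\eps(x)=\inf_y\{f(y)+|x-y|^2/\eps\}$, showing that the viscosity condition passes to $f_\eps$ (here the lower bound on $f$ is used to control the displacement $|x-y_x|$), then integrating by parts against $\varphi$ and letting $\eps\to0$; the step $(2)\Rightarrow(3)$ uses mollification together with \autoref{cor:EquivDoverDunder}. Your argument instead kills $(2)\Leftrightarrow(3)$ in one stroke by subtracting a $C^2$ primitive $G$ of $g\circ f$ and invoking \autoref{lem:Concavity} plus the standard equivalence ``concave $\Leftrightarrow$ distributional $h''\le 0$''; then you handle $(1)\Rightarrow(3)$ by an elementary contradiction, producing an explicit quadratic touching function from the failure of the upper second-difference bound. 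Both approaches share the easy $(3)\Rightarrow(1)$. What your route buys: it is shorter, avoids the inf-convolution machinery entirely, and---worth noting---never uses the hypothesis that $f$ is bounded from below, so it in fact proves a slightly stronger statement. What the paper's route buys: the inf-convolution technique is of independent interest and transfers to situations (e.g.\ fully nonlinear equations or metric spaces) where one cannot simply subtract an antiderivative of the right-hand side.
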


\begin{proof}
Without loss of generality, we assume that $f$ is nonnegative.
\begin{itemize}
    \item[(1)$\Rightarrow$(2)]
    Let $\varphi\in C^\infty_c(a,b)$ for some $a,b \in I$ with $\varphi\ge 0$. Let $M\eqdef \sup_{[a,b]} f$. For $\eps>0$ and $x \in [a,b]$ we define the inf-convolution $f_\eps(x) \eqdef \inf_{y \in [a,b]} \{f(y) + |x-y|^2/\eps \}$. Observe that $f_\eps\le f$, $f_\eps$ is semiconcave, and $f_\eps\nearrow f$ pointwise on $[a,b]$, hence uniformly by Dini's monotone convergence. Let $y_x\in[a,b]$ be such that $f_\eps(x)=f(y_x)+|x-y_x|^2/\eps$, hence $|x-y_x|\le \sqrt{\eps M}$ since $f$ is nonnegative.\\
    For $\eps$ small, fix $x_0\in(a+\sqrt{\eps M},b-\sqrt{\eps M})$ such that $f_\eps$ is twice differentiable. Define $\psi(x)\eqdef f_\eps(x_0)+ f_\eps'(x_0)(x-x_0) + (f_\eps''(x_0)-\eta)(x-x_0)^2/2$, for $\eta>0$. Hence $\psi\le f_\eps$ in a neighborhood of $x_0$ and $\psi(x_0)=f_\eps(x_0)$. Let $\widetilde
    \psi(x)\eqdef \psi(x+x_0-y_{x_0})$. It is readily checked that $\widetilde\psi-f$ has a local maximum at $y_{x_0} \in (a,b)$. Hence (1) implies
    \[
    f_\eps''(x_0)-\eta = \widetilde\psi''(y_{x_0})  \le g(f(y_{x_0})),
    \]
    and letting $\eta\to0$ we get
    \[
    f_\eps''(x_0) \le \sup \left\{ g(f) \st [x_0-\sqrt{\eps M}, x_0+ \sqrt{\eps M}] \right\}
    \qquad \forall\,x_0\in(a+\sqrt{\eps M},b-\sqrt{\eps M}).
    \]
    For $\eps$ small enough we have ${\rm spt}\, \varphi \subset (a+2\sqrt{\eps M}, b - 2\sqrt{\eps M})$. Since $\overline{D}^2f_\eps$ is uniformly bounded above, we can apply Fatou's Lemma to get
    \[
    \begin{split}
        \int f_\eps \varphi'' 
        &= \lim_{h\to 0^+} \int_a^b f_\eps \, \d^2_h\varphi
        =  \lim_{h\to 0^+} \int_a^b \d^2_h f_\eps \, \varphi
        \le \int_a^b \overline{D}^2f_\eps \, \varphi
        \le \int_a^b \varphi (x)\,\,\sup_{[x-\sqrt{\eps M}, x+ \sqrt{\eps M}]} g(f(y))  \de x
    \end{split}
    \]
    where we used that $\overline{D}^2f_\eps= f_\eps''$ almost everywhere. Letting $\eps\to0$, (2) follows.
    
    \item[(2)$\Rightarrow$(3)]
    Let $\rho_\eps$ be a nonnegative symmetric mollifier for $\eps\in(0,1)$ with ${\rm spt}\, \rho_\eps\Subset (-\eps,\eps)$. Let $f_\eps\eqdef f \star \rho_\eps$, $(g(f))_\eps\eqdef (g\circ f)\star \rho_\eps$, and fix $x_0 \in I$ such that $(x_0-2\eps_0,x_0+2\eps_0)\subset I$ for $\eps_0\in(0,1)$ small enough. Let $I_\eps\eqdef (x_0-\eps,x_0+\eps)$. Since $\rho_\eps\ge0$, (2) implies that $f_\eps'' \le (g(f))_\eps$ pointwise on $I_{\eps_0}$ for any $\eps<\eps_0$. Then \autoref{cor:EquivDoverDunder} implies that
    \[
    \d^2_h f_\eps (x_0)\le \sup_{[x_0-h,x_0+h]} (g(f))_\eps(y) \qquad\qquad\forall h\in(0,\eps_0/2).
    \]
    Letting first $\eps\to0^+$ we deduce
    \[
    \d^2_h f (x_0)\le \sup_{[x_0-h,x_0+h]} (g(f(y))) \qquad\qquad\forall h\in(0,\eps_0/2),
    \]
    then taking $\limsup_{h\to0^+}$, (3) follows.
    
    \item[(3)$\Rightarrow$(1)]
    Let $x, \varphi$ be as in (1). Without loss of generality we can assume that $\varphi\le f$ and $\varphi(x)=f(x)$. Hence
    \[
    \varphi''(x)= \limsup_{h\to0^+} \d^2_h \varphi(x) \le \limsup_{h\to0^+} \d^2_h f(x) \overset{(3)}{\le} g(f(x)).
    \]
\end{itemize}
\end{proof}

We conclude with an elementary comparison result, whose content is analogous to \cite[Lemme C.2.1]{Bayle03}.

\begin{proposition}\label{lem:Comparison}
	Let $a,b\in\R$. Let $f_1,f_2:[a,b]\to [0,+\infty)$, $g:(0,+\infty)\to\R$ be continuous functions. Suppose that $f_i|_{(a,b)}>0$ for $i=1,2$, $g$ is nondecreasing, and that
	\[
	\overline{D}^2f_1(x)\ge g(f_1(x)), \qquad
	\underline{D}^2f_2(x)\le g(f_2(x)),
	\]
	for any $x \in (a,b)$. Then the following holds.
	\begin{enumerate}
		\item If $f_1(a)=f_2(a)$ and there is $a'\in(a,b]$ such that $f_1(a')>f_2(a')$, then $f_1(x)>f_2(x)$ for any $x \in [a',b]$.
		
		\item If $f_1(b)=f_2(b)$ and there is $b'\in[a,b)$ such that $f_1(b')>f_2(b')$, then $f_1(x)>f_2(x)$ for any $x \in [a,b']$.
		
		\item If $f_1(a)=f_2(a)$ and $f_1(b)=f_2(b)$, then $f_2(x)\ge f_1(x)$ for any $x \in [a,b]$. Moreover, if $f_2(x_0)=f_1(x_0)$ for some $x_0 \in (a,b)$, then
		\begin{equation*}
		\frac{\d^+}{\d x} f_1(x_0) = \frac{\d^+}{\d x} f_2(x_0) , 
		\qquad
		\frac{\d^-}{\d x} f_1(x_0) = \frac{\d^-}{\d x} f_2(x_0),
		\end{equation*}
		where $\tfrac{\d^\pm}{\d x} f_i(x_0)$ denotes right or left derivative of $f_i$ at $x_0$\footnote{Observe that the latter exist as $f_1$ (resp. $f_2$) is locally semiconvex (resp. semiconcave) on $(a,b)$ by \autoref{lem:Concavity}, \autoref{cor:EquivDoverDunder} and continuity of $g$.}.
	\end{enumerate}
\end{proposition}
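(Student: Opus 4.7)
The plan is to prove item (1) first, deduce (2) from it by the symmetry $x\mapsto -x$, and finally obtain (3) as a short consequence of (1).

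For (1), I would argue by contradiction. If $f_1>f_2$ fails somewhere on $[a',b]$, then by continuity there is a first crossing $c\eqdef\inf\{x\in[a',b]:f_1(x)\le f_2(x)\}\in(a',b]$, satisfying $f_1(c)=f_2(c)$ and $f_1>f_2$ on $[a',c)$. Since $f_1(a)=f_2(a)$, the set $\{x\in[a,a']:f_1(x)=f_2(x)\}$ is nonempty; let $\tilde c$ be its supremum, so $\tilde c\in[a,a')$, $f_1(\tilde c)=f_2(\tilde c)$, and strict inequality $f_1>f_2$ holds on $(\tilde c,c)\subset(a,b)$. The crux is to show that $\phi\eqdef f_2-f_1$ is concave on $[\tilde c,c]$: by the chord bound, concavity together with $\phi(\tilde c)=\phi(c)=0$ would force $\phi\ge 0$ on the interval, contradicting $\phi<0$ on $(\tilde c,c)$.

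To establish concavity of $\phi$, I would first invoke \autoref{cor:EquivDoverDunder} to upgrade the hypotheses: applied to $-f_1$ and $-g\circ f_1$ it yields $\overline{D}^2 f_1\ge g(f_1)\Leftrightarrow \underline{D}^2 f_1\ge g(f_1)$, and applied to $f_2$ it yields $\underline{D}^2 f_2\le g(f_2)\Leftrightarrow \overline{D}^2 f_2\le g(f_2)$ (the compositions $g\circ f_i$ are continuous). Combining with the elementary bound $\underline{D}^2(u-v)\ge \underline{D}^2 u-\overline{D}^2 v$, which follows from $\liminf(A_h-B_h)\ge \liminf A_h-\limsup B_h$, I obtain on $(\tilde c,c)$
\[
\underline{D}^2(f_1-f_2)(x)\ge \underline{D}^2 f_1(x)-\overline{D}^2 f_2(x)\ge g(f_1(x))-g(f_2(x))\ge 0,
\]
where the last inequality uses $f_1>f_2$ on $(\tilde c,c)$ and the monotonicity of $g$. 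Equivalently $\overline{D}^2\phi\le 0$ on $(\tilde c,c)$, so \autoref{lem:Concavity} gives the desired concavity of $\phi$ on $[\tilde c,c]$, completing the proof of (1). Item (2) follows by applying (1) to $\hat f_i(y)\eqdef f_i(-y)$ on $[-b,-a]$, and the first part of (3) follows from (1) by contradiction: if $f_1(x_0)>f_2(x_0)$ at some $x_0\in(a,b)$, then (1) with $a'=x_0$ yields $f_1>f_2$ on $[x_0,b]$, contradicting $f_1(b)=f_2(b)$.

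For the derivative statement in (3), \autoref{cor:EquivDoverDunder} also gives $\d_h^2 f_1(x)\ge\inf_{[x-h,x+h]}g(f_1)$ and $\d_h^2 f_2(x)\le\sup_{[x-h,x+h]}g(f_2)$, so $f_1$ is locally semiconvex and $f_2$ locally semiconcave on $(a,b)$, and both admit one-sided derivatives at every $x_0\in(a,b)$, with $D^- f_1(x_0)\le D^+ f_1(x_0)$ and $D^- f_2(x_0)\ge D^+ f_2(x_0)$. Since $x_0$ is an interior minimum of $\phi=f_2-f_1\ge 0$, one also has $D^+\phi(x_0)\ge 0\ge D^-\phi(x_0)$, i.e.\ $D^+ f_2(x_0)\ge D^+ f_1(x_0)$ and $D^- f_2(x_0)\le D^- f_1(x_0)$. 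Chaining these four inequalities,
\[
D^+ f_1(x_0)\le D^+ f_2(x_0)\le D^- f_2(x_0)\le D^- f_1(x_0)\le D^+ f_1(x_0),
\]
forces equality throughout and yields the desired coincidence of one-sided derivatives. The main obstacle is the concavity step: the nonadditivity of $\overline{D}^2$ over differences is circumvented by using \autoref{cor:EquivDoverDunder} to freely interchange $\overline{D}^2$ and $\underline{D}^2$, and by exploiting the superadditivity of $\liminf$ at the critical inequality.
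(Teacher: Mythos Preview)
Your proof is correct and follows essentially the same approach as the paper's. For item (1) you argue by contradiction while the paper argues directly, but the core step is identical: on the maximal subinterval where $f_1>f_2$, the difference $f_2-f_1$ satisfies $\underline{D}^2(f_2-f_1)\le 0$ (using monotonicity of $g$ together with the $\overline{D}^2\leftrightarrow\underline{D}^2$ equivalence from \autoref{cor:EquivDoverDunder}), hence is concave by \autoref{lem:Concavity}. The only genuine variation is in the derivative statement of (3): the paper shows $F=f_2-f_1$ is locally semiconcave and combines $D^-F\ge D^+F$ with the minimum condition $D^-F\le 0\le D^+F$, whereas you treat $f_1$ and $f_2$ separately as semiconvex/semiconcave and chain the four one-sided inequalities---a cosmetic but clean reformulation of the same idea.
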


\begin{proof}
We prove the items separately.
\begin{enumerate}
    \item Let $x_1\eqdef \inf\{ x \in[a,a'] \st f_1(t)>f_2(t) \,\,\forall\,t\in(x,a']\}$ and $x_2\eqdef \sup\{ x \in[a',b] \st f_1(t)>f_2(t) \,\,\forall\,t\in[a',x)\}$. Since $f_1(a)=f_2(a)$ and $f_1, f_2$ are continuous, then $f_1(x_1)=f_2(x_1)$. Since $g$ is nondecreasing, the function $F\eqdef f_2-f_1$ satisfies $\underline{D}^2F(x) \le 0$ on $(x_1,x_2)$, hence $F$ is concave on $[x_1,x_2]$ by \autoref{lem:Concavity}. Since $F\le 0$  on $[x_1,x_2]$, $F(x_1)=0$ and $F(a')<0$, then $F(x_2)<0$. By definition of $x_2$ as supremum, this implies $x_2=b$.
    
    \item Analogous to item (1).
    
    \item If $f_2(a')<f_1(a')$ for some $a'\in(a,b)$, then (1) and (2) imply that $f_2(a)<f_1(a)$ or $f_2(b)<f_1(b)$, against the assumptions. Suppose now that $f_2(x_0)=f_1(x_0)$ for some $x_0 \in (a,b)$. Let $F\eqdef f_2-f_1$. The function $F$ is nonnegative and has a minimum at $x_0$. By continuity, there exist $C,h>0$ such that $\underline{D}^2 F(x) \le g(f_2(x))- g(f_1(x)) \le C$ for any $x \in (x_0-h,x_0+h) \Subset (a,b)$. \autoref{cor:EquivDoverDunder} and \autoref{lem:Concavity} imply that $G(x)\eqdef F(x)-C x^2$ is concave on $(x_0-h,x_0+h)$. Hence
    \[
    \frac{\d^-}{\d x} F (x_0) -2C x_0 = \frac{\d^-}{\d x} G (x_0) \ge \frac{\d^+}{\d x} G (x_0) = \frac{\d^+}{\d x} F (x_0) -2C x_0.
    \]
    Since $F$ has a minimum at $x_0$, the previous estimate implies $ 0 \ge \tfrac{\d^-}{\d x} F (x_0) \ge  \tfrac{\d^+}{\d x} F (x_0) \ge 0$. Hence
    \[
    0 = \frac{\d^-}{\d x} F (x_0)
    = \frac{\d^-}{\d x}f_2(x_0) - \frac{\d^-}{\d x}f_1(x_0) = \frac{\d^+}{\d x}f_2(x_0) - \frac{\d^+}{\d x}f_1(x_0) =  \frac{\d^+}{\d x} F (x_0) = 0.
    \]
\end{enumerate}
\end{proof}

\printbibliography[title={References}]

\typeout{get arXiv to do 4 passes: Label(s) may have changed. Rerun} 

\end{document}